\title{Quantum characteristic classes and the Hofer metric}
\author{Yasha Savelyev}
\address{Stony Brook University\\\newline
Department of Mathematics\\
Stony Brook\\NY 11790\\USA}
\email{yasha@math.sunysb.edu}
\urladdr{http://www.math.sunysb.edu/~yasha}
\let\xysavmatrix\xymatrix
\def\xymatrix{\disablesubscriptcorrection\xysavmatrix}
\def\const{\mathrm{const}}
\def\sect{\mathrm{sect}}
\def\vert{\mathrm{vert}}
\def\BHam{B\mathrm{Ham}}
\newcommand{\ls}{\Omega \mathrm {Ham} (M, \omega)}
\newcommand{\freels}{L \mathrm {Ham}}
\DeclareMathOperator{\area}{area}
\DeclareMathOperator{\coker}{coker}
\DeclareMathOperator{\Aut}{Aut}
\DeclareMathOperator{\Vol}{Vol}
\DeclareMathOperator{\Bord}{Bord}
\newtheorem {theorem} {Theorem} [section]  
\newtheorem{lemma}[theorem] {Lemma} 
\newtheorem {question}  [theorem] {Question}
\newtheorem {proposition}  [theorem]{Proposition} 
\newtheorem {corollary}[theorem]  {Corollary} 
\newtheorem {axiom}{Axiom}  
\theoremstyle{definition}
\newtheorem {definition} [theorem] {Definition} 
\newtheorem {remark} [theorem] {Remark}
\newtheorem {example} [theorem]  {Example}
\newtheorem {notation}[theorem] {Notation} 
\begin{document}

\begin{asciiabstract}
Given a closed monotone symplectic manifold M, we define certain
characteristic cohomology classes of the free loop space LHam(M,omega)
with values in QH_*(M), and their S^1 equivariant version. These
classes generalize the Seidel representation and satisfy versions of
the axioms for Chern classes. In particular there is a Whitney sum
formula, which gives rise to a graded ring homomorphism from the ring
H_*(Omega Ham(M, omega),Q), with its Pontryagin product to
QH_{2n+*}(M) with its quantum product.  As an application we prove an
extension to higher dimensional geometry of the loop space
LHam(M,omega) of a theorem of McDuff and Slimowitz on minimality in
the Hofer metric of a semifree Hamiltonian circle action.
\end{asciiabstract}

\begin{abstract} 
Given a closed monotone symplectic manifold $M$, we define certain
characteristic cohomology classes of the free loop space
$L\mathrm{Ham}(M, \omega)$ with values in $QH_* (M)$, and their $S^1$
equivariant version. These classes generalize the Seidel
representation and satisfy versions of the axioms for Chern
classes. In particular there is a Whitney sum formula, which gives
rise to a graded ring homomorphism from the ring $H_{*} ( \Omega
\mathrm{Ham}(M, \omega), \mathbb{Q})$, with its Pontryagin product to
$QH_{2n+*} (M)$ with its quantum product.  As an application we prove
an extension to higher dimensional geometry of the loop space $L
\mathrm{Ham}(M, \omega)$ of a theorem of McDuff and Slimowitz on
minimality in the Hofer metric of a semifree Hamiltonian circle
action.
\end{abstract}

\maketitle

\section{Introduction}         
The topology and geometry of the group $ \Ham(M, \omega)$ of Hamiltonian
sym\-plect\-o\-morph\-isms of a symplectic manifold $M$ has been intensely studied by
numerous authors. This is an infinite-dimensional manifold with a remarkable
bi-invariant Finsler metric induced by the Hofer norm. It is of
foundational importance in symplectic geometry and Hamiltonian
mechanics.  As of now the deepest insights into the topology and Hofer geometry
of $ \Ham(M, \omega)$ come from Gromov--Witten invariants and related quantum and Floer
homology constructions. Still, rather little general information is known.

We define some general invariants, which will be used to study $\Ham(M, \omega)$. The Hofer geometry will serve a unifying role, and ultimately
is what allows us to compute the invariants in some cases. These computations can
in turn be used to get  Hofer geometric and topological information.  What
follows is a rapid review  of Hofer geometry. 

\subsection {Hofer geometry and Seidel representation}  Given a smooth function $H _{t}\co M
\to \mathbb{R}$, $0 \leq t\leq 1$,  there is  an associated time dependent
Hamiltonian vector field $X _{t}$, $0 \leq t \leq 1$,  defined by 
\begin{equation*}  
\omega (X _{t}, \cdot)=dH _{t} ( \cdot).
\end{equation*}
The vector field $X _{t}$ generates a path $\gamma_{t}$, $0 \leq t \leq1$, in
$\text
{Diff}(M, \omega)$. Given such a path $\gamma_t$, its endpoint $\gamma_1$ is
called a
Hamiltonian symplectomorphism. The space of Hamiltonian symplectomorphisms
forms a group, denoted by $\Ham(M, \omega)$.

In particular the path $\gamma_t$ above lies in $ \Ham(M, \omega)$. It
is well known 
that any path $\{\gamma _{t} \}$ in $\Ham(M, \omega)$ with $\gamma_0=\id$
arises in this way (is generated by $H_t\co  M \to \mathbb{R}$). Given such a path
$\{\gamma_t\}$, the \emph{Hofer length} $L (\gamma _{t})$ is defined by
\begin{equation*}L (\gamma_{t}):= \int_0 ^{1} \max (H ^{\gamma} _{t}) -\min (H
^{\gamma}_t) \,dt,
\end{equation*}
where $H ^{\gamma} _{t}$ is a generating function for the path
$ \gamma_{0} ^{-1} \gamma _{t},$ $0\leq t \leq 1$.
The Hofer distance $\rho (\phi, \psi)$ is  defined by taking the
infimum of the Hofer length of paths from $\phi $ to $\psi$.
It is a deep theorem that the resulting metric is nondegenerate (see
Hofer~\cite{H} and Lolonde and McDuff~\cite{LM}). This gives $ \Ham(M, \omega)$ the structure of a Finsler
manifold. We will be more concerned with a related measure of the path
\begin{equation*} L ^{+} (\gamma_{t}):= \int_0 ^{1} \max (H ^{\gamma} _{t}),
\end{equation*} where $H ^{\gamma} _{t}$ is in addition normalized by the
condition 
\begin{equation*} \int _{M} H ^{\gamma _{t}}=0.
\end{equation*}
Let $\gamma\co  S ^{1} \to \Ham(M, \omega)$ be a subgroup with generating
Hamiltonian $H$.  Let $F _{\max}, F _{\min}$ denote the
maximum and minimum level sets of $H$ (these are fixed by $\gamma$). We say
that $\gamma$ is \emph{semifree} at $F _{\max}$, respectively $F _{\min}$, if all
the nonzero weights of the linearized action of $\gamma$ on the normal bundles
to $F _{\max}$, respectively $F _{\min}$, are $-1$, respectively $+1$. 
To define these weights we take an $S^1$
equivariant orientation preserving identification of the normal bundle to $F _{\max}$ at $x \in F
_{\max}$ with $ \mathbb{C} ^{m}$, for some $m$, which splits into $\gamma$
invariant $1$--complex-dimensional subspaces $N _{k_i}$, on which $\gamma$ is
acting by \begin{equation} \label {eq.weights} v \mapsto e ^{-2\pi ik_i \theta}v.
\end{equation}
These $k_i$ are then defined to the weights of the circle action $\gamma$.
Here is one theorem that does give some general
information about topology and  geometry of $ \Ham(M, \omega)$. 
\begin{theorem}  [McDuff--Slimowitz--Tolman \cite{MSlim,MT}] \label
{thm.mcduff.slim}  Let $\gamma$ be a Hamiltonian circle action on a symplectic
manifold $M$, which  is semifree at $F _{\max}$ and $F _{\min}$. Then $\gamma$
is length-minimizing in its homotopy class for the Hofer metric on $ \Ham(M, \omega)$.
\end{theorem}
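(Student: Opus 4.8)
My plan is to deduce the minimality of $\gamma$ from the properties of the quantum characteristic classes, in particular the generalized Seidel representation and the Hofer-geometric estimate that these classes are expected to satisfy. The core principle, going back to the original work of McDuff and Slimowitz, is that a path $\{\gamma_t\}$ in $\Ham(M,\omega)$ is length-minimizing in its homotopy class once one can detect, via a Floer- or quantum-homological invariant attached to the loop $\gamma \# \bar\gamma'$ (for $\gamma'$ a competing path), a pseudoholomorphic section of the associated bundle over $S^2$ whose energy is controlled by $L^+$. The invariant I would use is the value of the quantum characteristic class (the Seidel-type element) $S(\gamma) \in QH_*(M)$ associated to the circle subgroup $\gamma$; the strategy is to show that $S(\gamma)$ is represented, for the natural $S^1$-invariant almost complex structure built from $\gamma$, by a section of minimal c-energy equal to $L^+(\gamma)$, and that this c-energy provides a lower bound for the Hofer length of any path homotopic to $\{\gamma_t\}$.

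First I would set up the Hamiltonian fibration $P_\gamma \to S^2$ determined by the loop $\gamma$, with its fiberwise symplectic form and the family of $S^1$-invariant almost complex structures $J$ compatible with the circle symmetry. Second, using the semifree hypothesis at $F_{\max}$ and $F_{\min}$, I would identify the relevant constant sections over the fixed point sets: because all nonzero weights are $-1$ at $F_{\max}$ and $+1$ at $F_{\min}$, the section lying over $F_{\max}$ is "maximal" in the sense that its vertical Chern number and area make it the unique (regular, index-minimal) $J$-holomorphic section in its class, exactly as in the McDuff--Slimowitz weight computation. This is where the semifree condition does all the work: it forces the linearized $\bar\partial$-operator along that section to be surjective with the expected (minimal) index, so no lower-energy sections can exist and the Gromov--Witten-type count defining the characteristic class is nonzero and carried by this section. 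Third, I would translate the area/c-energy of that section into the quantity $L^+(\gamma) = \int_0^1 \max H_t^\gamma\,dt$, using the normalization $\int_M H^{\gamma_t} = 0$; this is the standard computation relating the coupling class of $P_\gamma$ evaluated on the section to the Hofer$^+$ length.

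Finally, I would run the comparison argument: given any path $\{\gamma'_t\}$ homotopic to $\{\gamma_t\}$ rel endpoints, the concatenation produces the same bundle $P_\gamma$ up to the relevant equivalence, so the nonvanishing characteristic class forces, via Gromov compactness applied to the family $\{J_n\}$ of almost complex structures adapted to $\{\gamma'_t\}$, the existence of a $J'$-holomorphic section (possibly with bubbles) in the section class. Positivity of c-energy of the bubble components together with the energy identity then yields $L^+(\gamma') \geq L^+(\gamma)$, and since this holds for both $\gamma$ and its inverse path one upgrades $L^+$ to the full Hofer length $L$, giving $L(\gamma') \geq L(\gamma)$. The main obstacle I anticipate is the transversality/regularity step: establishing that the distinguished section over $F_{\max}$ (and $F_{\min}$) is regular for a generic invariant $J$, and more importantly that no sections of strictly smaller c-energy can appear in the Gromov limit — this is precisely where the semifree hypothesis must be leveraged carefully, and where one must control the contribution of multiply covered or bubble components in the possibly non-transverse equivariant setting, presumably by the weight computation showing that any such degeneration would have the wrong index.
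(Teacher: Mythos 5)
Your proposal is correct and follows essentially the same route as the paper: the paper proves this result (in the guise of \fullref{thmdusa}) by invoking the McDuff--Tolman computation $S(\gamma)=[F_{\max}]\otimes q^{-m_{\max}}t^{H_{\max}}+(\text{lower $t$--order terms})$, which rests on exactly the regularity-from-semifreeness and no-lower-energy-sections points you identify, and then feeding $\nu(S(\gamma))=H_{\max}$ into the homotopy-invariant lower bound of \fullref{lower bound} for $\gamma$ and its reverse to get $L\geq H_{\max}-H_{\min}$. The only cosmetic difference is that you phrase the comparison step via gluing/Gromov compactness for a competing path, whereas the paper packages it as homotopy invariance of the Seidel element plus the valuation inequality.
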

 One of the motivating applications of this thesis is an
extension of this theorem to the higher-dimensional geometry of $ \Ham(M, \omega)$.
The starting point for this is the Seidel representation
defined in \cite{Seidel}. This is a homomorphism
 \begin {equation} \label {eqS} {S}\co \pi_1 ( \Ham(M,
  \omega)) \to QH _{2n} ^{ \times} (M),
\end {equation} where $QH _{2n} ^{ \times} (M)$
 denotes the group of multiplicative units of degree 2n in quantum homology $QH_*
 (M)$, and $2n$ is the dimension of $M$. This representation of
 $\pi_1 ( \Ham(M, \omega))$ is a powerful tool in
 understanding the symplectic geometry of the manifold $ (M, \omega)$,
 of Hamiltonian fibrations $X _{\phi}$ over $S^2$  associated to loops $
 \{\phi_t\}$ in $ \Ham(M, \omega)$,
 as well as the Hofer geometry and topology of the group $ \Ham(M,
 \omega)$. In particular, \fullref{thm.mcduff.slim} can be proved using the
 Seidel representation as is essentially done by McDuff and Tolman~\cite{MT}.

There is a completely natural extension of Seidel representation to certain
cohomology classes of the associated loop spaces of $ \Ham(M, \omega)$.
\subsection {Quantum characteristic classes} 
Consider the free loop space $\freels(M, \omega)$, which we will
abbreviate by $\freels$. We construct natural bundles 
\begin{equation*} 
\tilde{p}\co  U\to \freels 
\quad \text{and} \quad p\co  U^{\smash{S^1}} \to Q \equiv (\freels \times S ^{\infty})/S
^{1}. \end{equation*} The fiber over a loop $\gamma$ is modelled by a
Hamiltonian fibration $\pi\co 
X _{\gamma} \to S^2$, with fiber $M$, associated to  the
loop $\gamma$ as follows:
\begin{equation} \label {eq.X}  X _{\gamma}= (M \times D^2_0 ) \cup 
(M \times D^2_\infty )/\sim
\end{equation}
where the equivalence relation $\sim$ is: $ (x, 1, \theta)_0 \sim (
\gamma_\theta (x),1,\theta)_\infty$. Here $D ^{2}_0$ and $D ^{2} _{\infty}$
are two names for the unit disk $D ^{2} \subset \mathbb{C}$ and $ (r, 2\pi
\theta)$ are polar coordinates on $D$.

Let  $p\co  P \to B$ be a bundle obtained by pullback of either $ \tilde{p}\co
 U \to \freels$ or $p\co  U^{\smash{S^1}} \to Q$, where $B$ is a closed oriented smooth manifold.
%
The bundle $P$ comes with a natural deformation class of families of
symplectic forms $ \{\Omega_b\}$ on the fibers $ \{X_b\}$.
We will define
characteristic classes $$c ^{q}_k (P) \in H ^{k} (B, QH_* (M)),$$ by counting
the number of fiber-wise or vertical $J$--holomorphic curves passing through
certain natural homology classes in $P$. Here $k$ is the degree of the class and the
superscript $q$ stands for quantum to distinguish it from the Chern classes $c_k$.
\begin{remark} \label {remark.quant.classes} Recall that the top Chern class
of a complex vector bundle is its Euler class, whose Poincare dual is represented by the self
intersection of the zero section. The classes $c ^{q} _{k} (P)$ are
also
in a sense described by the self intersection of a natural homology class in
$P$, playing the role of the zero section, except that the classical
intersection is always empty and instead one keeps track of
``instanton (or quantum) corrections" to this
self-intersection, coming from the presence of vertical $J$--holomorphic spheres.
This is the motivation for the name quantum characteristic class (cf
Vafa~\cite{Vafa}).
\end{remark}
\begin{remark} Michael Hutchings discovered maps
\begin{equation}\label {H} \pi_k ( \Ham(M, \omega)) \to \text {End} _{k - 1}
(QH_* (M)),
\end{equation} where $ \text {End} _{k - 1} (QH_* (M))$ denotes the additive
group of endomorphisms of the quantum homology of degree $k-1$.  These maps
generalize the Seidel representation. In fact his project is much more extensive \cite{Michael,MH}.
Hutchings defines \eqref{H} as a kind of a ``family'' or ``higher'' continuation map in
Floer homology. Dusa McDuff \cite{private} suggested the following approach to
these maps. Glue together $ M \times
(\mathbb{CP}^{k}\setminus B)$ with $M \times \bar {B}$, where $B$ is an open
$2k$ dimensional ball, by using a map $f\co  S ^{2k-1} \to \Ham(M, \omega)$
as a clutching map. We get a certain Hamiltonian fibration with fiber $M$
over $
\mathbb{CP} ^{k}$ and may presumably define a parametrized variant of the Seidel
representation
which takes into account the natural family of holomorphic curves in $
\mathbb{CP} ^{k}$. We are motivated by this idea but our approach is slightly
more
abstract, which also leads to a close relationship with the Hofer metric.
We expect that our invariants will coincide with Hutchings' for
cycles in the loop space $\ls$ induced by homotopy classes in $\pi_k (
\Ham(M, \omega))$ (cf \fullref{section.homotopy.groups} and also
\fullref {4}).
\end{remark} 

\begin{definition} \label {definition.sum} Given  fibrations $P _{f_1}, P
_{f_2}$ over $B$, induced by maps $$f_1, f_2\co  B \to \freels(M, \omega)$$
we define their  sum $P _{f_1} \oplus P _{f_2}$ to be  $P _{f_2 \cdot f_1}$, where $f_2
\cdot f_1\co  B \to \freels(M, \omega)$
is the pointwise product of the maps $f_1, f_2$ induced by the topological group
structure of $\freels(M, \omega)$.
\end{definition}
We'll show in \fullref{structure group} that these
fibrations have a natural structure group $\mathcal {F}$ and that $\freels(M, \omega)$ is
the classifying space of this structure group. Let $ \mathcal
{P} _{B, M}$ denote the group of isomorphism classes of fibrations $p: P
\to B$ with structure group $\mathcal {F}$ (ie the group of homotopy classes
of maps $f\co  B \to \freels$).

 We may now state the axioms satisfied by
our characteristic classes. For simplicity we assume here that the base $B$ is
connected.
\begin{definition} \label {def.quant.class} \emph{Quantum characteristic
classes} are a sequence
of functions $$c ^{q} _{k}\co  \mathcal {P} _{B, M} \to H ^{k} (B, QH_* (M)),$$ satisfying the following axioms:
 \begin{axiom} [Partial normalization]\label{Axiom 1} $c ^{q}_0 (P)= S ( [\gamma])$ if
the fiber of $p\co  P \to B$ is modelled on $X _{\gamma}$,
where $S$ is the Seidel
representation. Further if $P$ is trivial then $c ^{q}_k (P)=0$ for $k>0$.
\end{axiom}
\begin{axiom} [Functoriality] \label{functoriality}  If $g\co  B_1 \to B_2$ is
smooth, then $$ g ^{*} (c ^{q}_k (P_2))= c ^{q}_k ( g ^{*} (P_2)).$$
\end{axiom}
\begin{axiom} [Whitney sum formula] \label{thm.whitney.sum} If $P= P_1 \oplus
P_2$, then
\begin {equation*} c ^{q} (P) = c ^{q}(P_1) \cup c ^{q} (P_2),
\end {equation*} where $ \cup$ is the cup product of cohomology classes with
coefficients in the quantum homology ring $QH_* (M)$ and $c ^{q} (P)$ is the
total characteristic class
\begin{equation} \label {eq.total.class}
c^{q} (P)= c ^{q}_0 (P)+\ldots+ c ^{q}_m (P),
\end{equation}
where $m$ is the dimension of $B$.
(In practice, we mainly deal with the identity component of
$\freels$. In this case $c ^{q}_0 (P)=S ( [\gamma])$ is the
identity $ [M]$ in the quantum
homology ring and so we get an expression in \eqref{eq.total.class} analogous to the total Chern class.)
\end{axiom}
\end{definition}
 \begin{theorem} \label {thm.three.axioms} If $ (M, \omega)$ is a
closed monotone symplectic manifold, then there exist
natural nontrivial quantum characteristic classes $$c_k^{q}\co  \mathcal {P} _{B, M} \to H ^{k} (B, QH_* (M)).$$
\end {theorem}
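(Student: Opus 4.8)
The plan is to construct the classes $c^q_k(P)$ by a parametrized Gromov--Witten count and then verify the three axioms one at a time. First I would fix the bundle $p\colon P \to B$ with its deformation class of fiberwise symplectic forms $\{\Omega_b\}$ and choose a generic family $\{J_n\}$ of $\Omega_b$-compatible almost complex structures on the fibers $X_b$ that are fibered over the base $D^2$ of each Hamiltonian fibration $X_\gamma$ (so that each $J_n$ restricts to the standard complex structure on the $D^2$ factor and makes the projection $X_\gamma \to S^2$ holomorphic). Using monotonicity of $(M,\omega)$ to rule out bubbling in codimension one, I would set up the moduli space $\mathcal{M}^*_{0,2}(P_D, A, \{J_n\})$ of vertical $J$--holomorphic spheres in the fibers, in classes $A$ projecting to the section class of $X_\gamma \to S^2$, with two marked points constrained to lie over the two fixed sections $(M\times\{0\})$ and $(M\times\{\infty\})$ of $X_\gamma$; here monotonicity plus the closedness and orientability of $B$ make this moduli space a compact oriented manifold (or pseudocycle) of the expected dimension. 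Evaluation at the two marked points, together with the projection to $B$, gives a class in $H_*(B)\otimes H_*(M) \cong H^{\dim B - *}(B, QH_*(M))$ once one sorts out the Poincaré duality and the Novikov grading; assembling over all $A$ with the appropriate Novikov weights and taking the degree-$k$ part defines $c^q_k(P)$, and nontriviality in degree $0$ will follow from the partial normalization axiom below.

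Next I would verify the axioms. Functoriality (Axiom~\ref{functoriality}) is essentially formal: given $g\colon B_1\to B_2$, the moduli space for $g^*P_2$ is the fiber product of the moduli space for $P_2$ with $B_1$ over $B_2$, and for generic data this identification is transverse and orientation-compatible, so the evaluation-plus-projection cycles correspond under $g^*$. For partial normalization (Axiom~\ref{Axiom 1}): when $B$ is a point, $P = X_\gamma$ and the moduli space with its two marked-point constraints over $M\times\{0\}$ and $M\times\{\infty\}$ is exactly the one McDuff--Seidel use to define $S([\gamma])$, so $c^q_0(X_\gamma) = S([\gamma])$ by construction; and for a trivial bundle $P = B\times X_{\mathrm{const}}$ the family of curves is the constant family, which projects to a point in $B$, hence contributes nothing to $H^k(B,QH_*(M))$ for $k>0$. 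The Whitney sum formula (Axiom~\ref{thm.whitney.sum}) is the substantive one: here I would use the fibered gluing description of $X_{f_2\cdot f_1}$ — concatenating the clutching loops $f_1$ and $f_2$ corresponds to gluing the two Hamiltonian fibrations along a fiber, equivalently degenerating the base $S^2$ of $X_{f_2\cdot f_1}$ into a nodal curve $S^2\vee S^2$ — and a standard gluing/degeneration argument for the vertical curves shows that, over $B$, the count for $P_1\oplus P_2$ factors as a fiber-product over $M$ of the counts for $P_1$ and $P_2$. Translating the fiber-product-over-$M$ at chain level into an operation on $H^*(B,QH_*(M))$ yields precisely the cup product paired with the quantum product on coefficients, which is the asserted formula $c^q(P) = c^q(P_1)\cup c^q(P_2)$.

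The main obstacle I expect is making the Whitney sum formula rigorous: one must choose the almost complex structures on $X_{f_1}$, $X_{f_2}$ and on the glued/stretched bundle $X_{f_2\cdot f_1}$ coherently so that the neck-stretching limit of a vertical sphere breaks into a pair of vertical spheres, one in each piece, meeting along a point in the central fiber $M$, with no extra components escaping — this requires the usual SFT-style compactness and gluing analysis adapted to the family-over-$B$ setting, together with a careful check that the orientations and Novikov gradings match so that the algebra comes out as the quantum product rather than some twisted version. A secondary technical point is the transversality of $\{J_n\}$ within the class of fibered almost complex structures (so that the projection $X_\gamma\to S^2$ stays holomorphic); this is handled exactly as in Seidel's and McDuff's treatment of the Seidel representation, perturbing only in the fiber directions while keeping the base-$S^2$ structure fixed, and monotonicity is what guarantees these perturbations suffice.
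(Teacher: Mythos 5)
Your proposal follows essentially the same route as the paper: the classes are defined by counting vertical $J$--holomorphic spheres in fiberwise section classes with a marked-point constraint at the polar fiber $I_0(B\times M)$, monotonicity gives the pseudocycle property, functoriality and normalization are checked by pulling back to cycles and points, and the Whitney sum formula is proved by realizing $P_{f_2\cdot f_1}$ as the fiber sum of $P_{f_1}$ and $P_{f_2}$ along the divisor $B\times M$ and splitting the parametrized invariants (the paper packages your ``fiber product over $M$ equals cup product with quantum coefficients'' step as a composition law $m^{P}=m^{P_1}\circ m^{P_2}$ together with a computation of the three-point invariants of the trivial fibration). The only notable simplification available to you, which the paper exploits, is that all curves are sections and hence meet the gluing divisor transversally in a single point, so the full SFT-style degeneration analysis you anticipate reduces to the elementary case of the symplectic sum formula.
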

We define these classes in \fullref{quantum.char.classes} 
and  prove in \fullref{axiom1and2} there that they satisfy Axioms \ref{Axiom 1}, \ref{functoriality} and
\ref{thm.whitney.sum}. 
\begin{remark} Throughout we work with the class of monotone symplectic manifolds  $(M,
\omega)$, ie those satisfying $$[\omega]= \const \cdot c_1 (M),$$ for a $\const
>0$. This condition insures that the relevant evaluation maps are pseudocycles.
It is likely that this condition can be removed in the definition of
QC classes by using methods of the virtual moduli
cycle. However a few properties of QC classes may potentially require
monotonicity, notably \fullref{4}. 
\end{remark}

\begin{remark}  We make no claim for uniqueness of these classes,
as there are not enough axioms here. We are missing something like a
normalization axiom (see for example Milnor and Stasheff~\cite{MilnorStasheff}). It would be
interesting to know if one can find a suitable substitute.
\end{remark}
 \subsection {Generalized Seidel representation}
For the following discussion we consider the based loop space $\Omega \Ham(M, \omega)$. This is a topological group with product induced by the
product in  $ \Ham(M, \omega)$, there is an induced product on homology,
the Pontryagin product, giving $H_* ( \Omega \Ham(M, \omega))$ the structure of a ring.
Let $f\co  B \to \freels$  be a map from a
smooth oriented closed
$k$--manifold, and $P _{f}$ the induced fibration. Define
\begin{equation*} 
\Psi (B,f) \equiv c ^{q} _{k} (P _{f}) (B) \in QH
_{2n+k}
(M). \end{equation*}
We will show that this induces a map $$\Psi\co 
H_* (\ls, \mathbb{Q}) \to QH _{2n+*} (M).$$ The Whitney sum formula (\fullref{thm.whitney.sum}) will imply that $\Psi$ is a graded ring homomorphism.
\begin{theorem} \label{main theorem} Let $ (M, \omega)$ be a closed, monotone
symplectic
manifold of dimension 2n. There is a natural graded ring homomorphism
\begin{equation*} 
{\Psi}\co  H _{*} (\ls,
\mathbb{Q}) \to QH_{*+2n} (M),
\end {equation*} where the
product on the right is the quantum product. 
\end{theorem}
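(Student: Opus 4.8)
The plan is to let the three axioms of \fullref{thm.three.axioms} carry all the analytic content, and to reduce the statement to formal properties of characteristic classes, the Künneth theorem, and rational oriented bordism. First I fix the definition of $\Psi$ on homology. Given $\alpha\in H_k(\ls;\mathbb{Q})$, choose a representing map $f\co B\to\ls$ with $B$ a closed oriented smooth $k$--manifold; such a representative exists after tensoring with $\mathbb{Q}$ because $\Omega^{\mathrm{SO}}_*(X)\otimes\mathbb{Q}\cong H_*(X;\mathbb{Q})\otimes_{\mathbb{Q}}(\Omega^{\mathrm{SO}}_*(\pt)\otimes\mathbb{Q})$. Put
\[
\Psi(\alpha):=c^q_k(P_f)(B)=\langle c^q_k(P_f),[B]\rangle\in QH_{2n+k}(M),
\]
exactly the expression defining $\Psi(B,f)$ above. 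The shift by $2n$ is built into the construction of $c^q_k$ in \fullref{quantum.char.classes} (it is the index count for the vertical holomorphic spheres, consistent with $c^q_0=S([\gamma])\in QH_{2n}$), so I take it as given; one checks the degrees are compatible with the assertion, since the quantum product sends $QH_{2n+k}\otimes QH_{2n+l}$ to $QH_{2n+(k+l)}$, matching the Pontryagin product $H_k\otimes H_l\to H_{k+l}$.

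Next I check that $\Psi$ is well defined. By the Functoriality Axiom (\fullref{functoriality}), if $(B,f)$ bounds $(W,F)$ then $c^q_k(P_F)\in H^k(W;QH_*(M))$ restricts on $\partial W=B$ to $c^q_k(P_f)$, so $\langle c^q_k(P_f),[B]\rangle=\langle c^q_k(P_F),\iota_*[\partial W]\rangle=0$ because $\iota_*[\partial W]=0$ in $H_k(W)$; together with additivity under disjoint unions this produces a homomorphism $\widetilde\Psi\co\Omega^{\mathrm{SO}}_k(\ls)\to QH_{2n+k}(M)$. To see that $\widetilde\Psi\otimes\mathbb{Q}$ descends to $H_k(\ls;\mathbb{Q})$, recall that the kernel of the surjection $\Omega^{\mathrm{SO}}_k(\ls)\otimes\mathbb{Q}\to H_k(\ls;\mathbb{Q})$ is spanned by classes of the form $(B\times N,\,f\circ\pr_B)$ with $N$ a closed oriented manifold of positive dimension; on such a class $P_{f\circ\pr_B}=\pr_B^*P_f$, so by functoriality its top class $c^q_{k+\dim N}(P_{f\circ\pr_B})=\pr_B^*c^q_{k+\dim N}(P_f)$ vanishes, since $c^q_{k+\dim N}(P_f)\in H^{k+\dim N}(B;QH_*(M))=0$. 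This yields a $\mathbb{Q}$--linear, grading-preserving map $\Psi\co H_*(\ls;\mathbb{Q})\to QH_{2n+*}(M)$ whose restriction to $H_0$ is the $\mathbb{Q}$--linear extension of the Seidel representation \eqref{eqS}.

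It then remains to prove multiplicativity. Let $\alpha\in H_k$, $\beta\in H_l$ be represented by $f\co A\to\ls$ and $g\co B\to\ls$. The Pontryagin product $\alpha\cdot\beta$ is represented by the pointwise product $h\co A\times B\to\ls$, $h(a,b)=f(a)g(b)$, and by \fullref{definition.sum} the induced fibration is $P_h=\pr_B^*P_g\oplus\pr_A^*P_f$. The Whitney Sum Formula (\fullref{thm.whitney.sum}) together with functoriality gives
\[
c^q(P_h)=\pr_B^*c^q(P_g)\cup\pr_A^*c^q(P_f)\in H^*(A\times B;QH_*(M)).
\]
Pairing the degree $k+l$ component with $[A\times B]=[A]\times[B]$, all Künneth bidegrees other than $(\dim A,\dim B)$ contribute zero, since the relevant $c^q$--component of $P_f$ or of $P_g$ then lies in cohomological degree exceeding $\dim A$ or $\dim B$; so only $\langle\pr_B^*c^q_l(P_g)\cup\pr_A^*c^q_k(P_f),\,[A]\times[B]\rangle$ survives, which by the Künneth and Kronecker formulas equals $\Psi(\alpha)*\Psi(\beta)$, the product being the quantum product on $QH_*(M)$ that enters the cup product of $QH_*(M)$--valued cochains. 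Finally, the unit $1\in H_0(\ls;\mathbb{Q})$ is the class of the constant loop at $\id$, whose fibration over a point is the product $M\times S^2$, and the Partial Normalization Axiom (\fullref{Axiom 1}) gives $c^q_0=S(0)=[M]$, the quantum unit; hence $\Psi(1)=[M]$ and $\Psi$ is a ring homomorphism.

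With the three axioms in hand, the genuinely delicate step is the sign bookkeeping that turns the multiplicativity identity into an honest equality rather than an equality up to a Koszul sign: one must reconcile the non-commutativity of $\Ham$ (which produces the reversed factor order in $P_h=\pr_B^*P_g\oplus\pr_A^*P_f$), the graded-commutativity of the cup product, the correlation between the cohomological degree $j$ of $c^q_j$ and the internal $QH_{2n+j}$--grading of its values, and the sign incurred in passing from the cohomology cross product to the Kronecker pairing against $[A]\times[B]$; once orientation and sign conventions are fixed consistently these contributions cancel. A smaller technical point is the descent from bordism to rational homology, handled as above via the polynomial structure of $\Omega^{\mathrm{SO}}_*(\pt)\otimes\mathbb{Q}$ on the generators $[\mathbb{CP}^{2j}]$.
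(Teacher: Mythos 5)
Your multiplicativity argument is the paper's own: represent the Pontryagin product by the pointwise product of maps, identify the resulting fibration with the $\oplus$--sum of the pulled-back fibrations (\fullref{definition.sum}), apply \fullref{functoriality} and \fullref{thm.whitney.sum}, and observe that in the pairing with $[A]\times[B]$ only the K\"unneth bidegree $(\dim A,\dim B)$ survives; the unit computation via \fullref{Axiom 1} also matches. Where you genuinely diverge is the descent to homology: you invoke Thom's theorem that $\Omega^{\mathrm{SO}}_*(X)\otimes\mathbb{Q}$ surjects onto $H_*(X;\mathbb{Q})$ with kernel generated by products with positive-dimensional closed manifolds, whereas the paper uses the Milnor--Moore/Cartan--Serre theorem (\fullref{thm.milnor.moore}) to see that the rational homology of the $H$--space $\ls$ is generated by smooth cycles (products of spheres) with relations generated by cylindrical cobordisms. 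Your route is arguably cleaner, since it makes the kernel explicit and kills it by the degree vanishing $H^{k+\dim N}(B;QH_*(M))=0$; both are legitimate.

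The one genuine gap is your claim that bordism invariance of $\widetilde{\Psi}$ is a formal consequence of the Functoriality Axiom. The classes $c^q_k$ are defined, and the axioms are stated, only for fibrations over \emph{closed} oriented manifolds; \fullref{functoriality} does not by itself produce a class $c^q_k(P_F)\in H^k(W;QH_*(M))$ on a compact manifold with boundary restricting to $c^q_k(P_f)$ on $\partial W$. This is exactly the analytic content the paper supplies separately in \fullref{independence}: one chooses a regular family $\{J^C_b\}$ over the cobordism restricting to regular families over the two ends, and \fullref{cobordism} together with \fullref{proposition.pseudocycle} shows that the zero-dimensional intersections defining the coefficients $b_A$ on the two ends cobound a one-manifold, hence have equal counts. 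So either you must extend the definition of $c^q_k$ to bases with boundary (via a relative pseudocycle argument) and then prove restriction compatibility, or you should run the cobordism of parametrized moduli spaces directly as the paper does; the three axioms alone do not carry this step, so your opening claim that they ``carry all the analytic content'' overstates what they give.
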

In \fullref{quantum.char.classes} and \fullref{axiom1and2} we will describe 
these constructions and results in detail and give some computations and
applications.
\subsection[Applications from \ref{QC.and.Hofer}]{Applications from \fullref{QC.and.Hofer}}
 Given a map $$f\co B
\to \freels \text \quad \text {or} \quad f\co  B \to Q=(\freels(M, \omega)
\times S^\infty) /S ^{1},$$ where $B$ is as before, we call \begin{equation*} 
L^+(f) \equiv \max _{b \in B} L ^{+} (\gamma_b),
\end{equation*}
the \emph{positive max-length measure} of $f$, where $\gamma_b$ is either the
loop $f (b) \in \freels$ or the $S^1$ equivariant loop $f (b) \in Q$.
More precisely, in the second case let $q\co  \freels \times S^\infty \to Q$
denote the $S^1$ quotient map. Then $f (b) = q (\gamma_b, s_b)$ for some $(\gamma_b, s_b)$  and any two choices are related by an action of $S^1$ and
hence the corresponding loops $\gamma_b$ have the same length.

We define the \emph{virtual index} of a one
parameter subgroup $\gamma\co  S^1 \to \Ham(M, \omega)$ by
\begin{equation*} 
I (\gamma)=
\sum _{\substack {1 \leq i \leq n \\ k_i  \leq -1}} 2(|k_i|  -1),
\end{equation*}
where $k_i$ are the \emph{weights} of $\gamma$ at the
max level set $F_{\max}$ of $H$ (see \eqref{eq.weights}).

We shall see in \fullref{section.costr.eq.cycles} that a map $\hat{f}\co
Y \to \Ham(M, \omega),$ where $q\co  Y \to B$ is a principal $S^1$ bundle induces a cycle $f\co B \to
Q$. 
\begin{theorem} \label{main.lie.group} Let $ (M, \omega)$ be a compact monotone
symplectic manifold and let $ \hat{f}\co  Y \to \Ham(M, \omega)$ be
equivariant with respect to a right action by $\gamma\co  S^1 \to \Ham(M, \omega)$ on $ \Ham(M,
\omega)$, such that $I (\gamma)= \dim B$ and $e
^{1/2 \dim B} \neq 0$ (or $\dim B=0$) where $e$ is the
Euler class of the $S^1$ bundle $Y \to B$. Then
the induced cycle $f\co  B \to Q$ is essential, ie doesn't vanish in the oriented
bordism group $\Bord _{I (\gamma)} (Q)$, and moreover it minimizes the positive max-length
measure in its bordism class.
\end{theorem}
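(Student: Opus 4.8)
The plan is to deduce both parts from the single fact that the class $\Psi(B,f)=c^{q}_{I(\gamma)}(P_f)(B)\in QH_{2n+I(\gamma)}(M)$ is nonzero, and then to feed this into a Polterovich--McDuff--Slimowitz type area estimate for the Hofer statement. So the first point is that $\Psi$ is invariant under oriented bordism of the cycle $f\co B\to Q$: given a bordism $F\co W\to Q$ from $(B,f)$ to $(B',f')$, the pulled-back fibration $P_F\to W$ carries the same natural deformation class of fiberwise symplectic forms, and the moduli space of vertical $J$--holomorphic curves over $W$ used to define $c^{q}_k$ is, for generic auxiliary data, a compact oriented cobordism between the corresponding moduli spaces over $B$ and $B'$ --- monotonicity of $(M,\omega)$ supplies the needed compactness, since the spherical class is fixed --- so the signed counts agree and $\Psi(B,f)=\Psi(B',f')$. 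In particular $\Psi$ vanishes on null-bordant cycles, so once $\Psi(B,f)\neq0$ is established it follows that $f$ cannot vanish in $\Bord_{I(\gamma)}(Q)$, the first assertion; and by \fullref{functoriality} it then suffices to evaluate $\Psi$ on any convenient representative of the bordism class of $f$.

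Next I would compute $\Psi(B,f)$ from the equivariance of $\hat{f}$. The assignment $\phi\mapsto(\theta\mapsto\phi\gamma_\theta)$ is an $S^1$--equivariant map $\Ham(M,\omega)\to\freels$, intertwining right translation by $\gamma$ with loop rotation, so composing with $\hat{f}$ and passing to $S^1$--quotients exhibits $P_f$ as the Hamiltonian fibration $X_\gamma\to S^2$ of the subgroup $\gamma$, twisted over $B$ by the principal bundle $Y$; the whole count thus reduces to the parametrized Seidel element of $\gamma$ paired against $B$ through powers of the Euler class $e$ of $Y\to B$. The constant section of $X_\gamma$ through the maximal fixed locus $F_{\max}$ determines a distinguished section class $\sigma_{\max}$, and the virtual index $I(\gamma)=\sum_{k_i\le-1}2(|k_i|-1)$ is exactly the excess of the expected dimension of the moduli of vertical holomorphic sections in class $\sigma_{\max}$ over $\dim F_{\max}$. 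The hypotheses $I(\gamma)=\dim B$ and $e^{\frac12\dim B}\neq0$ (or $\dim B=0$) make the count that remains after imposing the Euler-class constraint rigid, and a fixed-point localization and transversality analysis near $F_{\max}$ --- in which the role played by semifreeness at $F_{\max}$ when $\dim B=0$ is taken over in general by the index bookkeeping carried by $I(\gamma)$ --- identifies the $\sigma_{\max}$--contribution to $\Psi(B,f)$ with $\pm[F_{\max}]$ times a suitable power of the quantum parameter. Being the contribution of lowest coupling area among the section classes carrying a nonzero Gromov--Witten number, it is not cancelled, so $\Psi(B,f)\neq0$; with the previous paragraph this proves essentiality. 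When $\dim B=0$ and $\gamma$ is semifree at $F_{\max}$, this recovers the mechanism behind \fullref{thm.mcduff.slim}.

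Finally, for the Hofer bound, suppose for contradiction that $f'\co B'\to Q$ is bordant to $f$ with $L^+(f')<\max H$, so that $L^+(\gamma_{b'})<\max H$ for every $b'\in B'$. Since $\Ham(M,\omega)$ is connected and $f'$ is bordant to $f$, each $\gamma_{b'}$ is freely homotopic to $\gamma$, so $X_{\gamma_{b'}}\cong X_\gamma$ and $\sigma_{\max}$ is defined there. Following McDuff and Slimowitz, because the coupling area of $\sigma_{\max}$ in $X_\gamma$ equals $L^+(\gamma)=\max H$ by a direct action computation at $F_{\max}$, the strict inequality $L^+(\gamma_{b'})<\max H$ lets one build, from a generating path of $\gamma_{b'}$ of $L^+$--length $L^+(\gamma_{b'})$, a coupling form $\Omega_{b'}$ on $X_{\gamma_{b'}}$ with $\langle[\Omega_{b'}],\sigma_{\max}\rangle<0$. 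Doing this fiberwise produces an admissible family of fiberwise symplectic forms on $P_{f'}$. But $\Psi(B',f')=\Psi(B,f)\neq0$ by bordism invariance, so the signed count of vertical $J$--holomorphic curves in class $\sigma_{\max}$ is nonzero; for $J$ compatible with this family a (possibly nodal) such curve then exists in some fiber $X_{\gamma_{b'}}$ and has symplectic area $\langle[\Omega_{b'}],\sigma_{\max}\rangle<0$, contradicting positivity of symplectic area. Hence $L^+(f')\ge\max H$. On the other hand $f$ itself satisfies $L^+(f)=\max_b L^+(\gamma_b)=\max H$, since every loop $\gamma_b$ is a left translate of $\gamma$, so the path $\gamma_b(0)^{-1}\gamma_b(\theta)=\gamma_\theta$ appearing in the definition of $L^+$ is the subgroup itself, generated by the normalized time-independent Hamiltonian $H$. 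Therefore $f$ minimizes the positive max-length measure in its bordism class.

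The hard part is the middle step: making rigorous the passage from the abstract count $c^{q}_{I(\gamma)}(P_f)(B)$ --- a count over the entire total space, cut down by constraints --- to the localized contribution at $F_{\max}$ weighted by $e^{\frac12\dim B}$. This requires the index identification matching $I(\gamma)$ with the genuine excess dimension of the section moduli problem, a transversality and gluing analysis of the parametrized moduli space near $F_{\max}$ when the isotropy weights there are not all $-1$ (so the action need not be semifree), and ruling out section classes of strictly smaller coupling area --- which is also exactly what powers the area estimate in the Hofer step. Bordism invariance and the Polterovich--McDuff--Slimowitz estimate, by contrast, are essentially adaptations of established technology to the family setting.
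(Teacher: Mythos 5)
Your proposal follows essentially the same route as the paper: bordism invariance of $\Psi$ (the paper's \fullref{independence}), identification of $P_f$ with $Y\times_{S^1}X_\gamma$ and localization of the leading-order, lowest-coupling-area contribution at $B\times F_{\max}$ weighted by powers of the Euler class (the paper's \fullref{lemma.isomorphism}, \fullref{thm.leading.term} and \fullref{thm.lie.group}), and the valuation/area estimate $\nu(\Psi)\leq L^+$ over the bordism class (the paper's \fullref{lower bound}). The ``hard middle step'' you defer is carried out in the paper exactly as you anticipate, by identifying the obstruction bundle over $B\times F_{\max}$ with $\bigoplus_i H^0(S^2,L_{k_i}^*\otimes K)^*$ via the Dolbeault operator, so that $I(\gamma)$ is its rank and $e(E)=\prod_i\prod_{0\leq j\leq n_i}\bigl((j+1)e(\mathcal K)+e(L_i)\bigr)$, whose top term in $e(\mathcal K)$ is nonzero precisely under your hypotheses.
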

\begin{remark}  \label{remark.index} In \cite{U}
Ustilovsky gives a formula for the Hessian, ie  the ``second
variation formula'' for the Hofer length functional and its positive and negative
variants. We might try to define the \emph{index} of a Hofer geodesic $\gamma$
to be the dimension of the maximum subspace of the tangent space to $\gamma$ (in
$\freels$) on which
the corresponding Hessian for the positive Hofer length functional is
negative definite. This index could  well be infinite as we are
working on the
loop space of the infinite dimensional space $ \Ham(M, \omega)$.
However, \fullref{main.lie.group} suggests that at least for the geodesic
coming from a
circle action that satisfies hypotheses of the theorem the index must be
finite. The
heuristic argument for this, as well as for necessity of the virtual index
 condition of the theorem, is the following.
Up to the action of $S^1$, all the loops in the image
 $f(B)$ are of the form $ \hat{f} (y) \circ
\gamma$ for $y \in Y$, by our assumption that $ \hat{f}\co  Y \to \Ham(M, \omega)$ is $S^1$ equivariant.
Since the Hofer metric is bi-invariant all these loops have the same index as
$\gamma$. Moreover, we should get a certain vector bundle over the image $f (B)$ whose
fiber over the equivariant loop $ f (b)$, $b \in B$, is ``the'' maximum negative
definite subspace of the tangent space to $f (b)$, with respect to the
corresponding Hessian. This is slightly wrong as there is no way to
canonically pick out this negative definite subspace. However, we can fix such a
subspace
of the tangent space at $\gamma$ and then use the fact that all the other loops
are translates of $\gamma$ of the form $ \hat{f} (y) \circ \gamma$ up to
the action of $S^1$ to construct this bundle locally and glue to get a global bundle. 
Let's call this \emph{ND bundle}. If the rank of
this ND bundle, given by the index, is bigger than $\dim B$ we can
push the zero section off
of itself and then ``exponentiate'' to produce a deformation of the cycle $f: B
\to Q$ which reduces the max length measure; an
apparent contradiction. On the other hand if the index is equal to
$\dim B$, then there is an obstruction to reducing the
max length measure  by such a local move coming from the Euler class of the
ND bundle. Lastly, if the index is
strictly less than $ \dim B$ there is still an obstruction coming from
the Euler class but it is no longer in the top cohomology of $B$.  
Therefore while the cycle $f\co  B \to Q$ may minimize the max-length measure
locally and maybe even in its homotopy class it, it may be unreasonable to hope
that it is minimizing in the entire bordism class. 
\end{remark}
\begin{remark}  This heuristic
argument suggests that a necessary condition for minimality of $f\co  B \to
Q$ above is that the index is equal to $\dim B$. This condition is local;
on the other hand the conclusion of \fullref{main.lie.group} is global.
Nevertheless, to prove
it we compute the ``leading order'' contribution to the top quantum characteristic
class of the associated bundle $p\co  P_f \to B$,
in terms of the Euler class of a vector bundle  analogous to the ND
bundle above.
A bit more precisely, this bundle will be an obstruction bundle for a certain
moduli space of holomorphic curves (cf \fullref{section.lie.group}).
\end{remark}
 These remarks motivate the questions.
 \begin{question} Is the  index of $\gamma$ as
 defined above finite? Do the index of $\gamma$ and the virtual index
 of $\gamma$ coincide?  We say a bit on this topic in a sequel
 \cite{Sequel}.
 \end{question} 
\begin{example}[for \fullref{main.lie.group}] \label
{section.example.S3} Consider the  Lie group
homomorphism $\hat{f}\co  S^3 \to \Ham( \mathbb{CP} ^{n}, \omega)$, given by $$s
\cdot( [z_0, z_1,
\ldots, z_n])= [s (z_0, z_1), \ldots, z_n] \quad \text {for all } s \in S
^{3}=SU (2), \ \ [z_0, \ldots, z_n] \in \mathbb{CP}^n.$$ We can form an $S^1$
bundle $h\co S^3 \to S^2$ by taking the quotient of $SU (2)$ by the right
action of the diagonal $S^1$ subgroup $ \theta \mapsto (e ^{i \theta}, e
^{-i\theta})$. If we take  $\gamma\co  S^1 \to \Ham( \mathbb{CP}^n,
\omega)$ to be the subgroup \begin{equation*} e ^{i\theta} \cdot [z_0,z_1, \ldots, z_n]= [e
^{i\theta}z_0, e ^{-i\theta}z_1,z_2,, \ldots, z_n],
\end{equation*} acting on $ \Ham( \mathbb{CP} ^{n}, \omega)$ on the right
then the map $ \hat{f}$ is $S^1$ equivariant for the two actions.
The weights of $\gamma$ at the maximum $\max=
[1,0,0,\ldots]$ of the generating function $H$ are $-2, -1, -1, \ldots$
and so $I(\gamma)=2$. Thus, by \fullref{main.lie.group}   the associated
cycle $$f_h\co  S^2 \to Q$$ is essential and minimizes the max-length measure in its bordism class. 

But there is another
cycle we can assign to $ \hat{f}$. This is the cycle $$f\co  S^2 \to \ls$$
obtained from $ \hat{f}\co  S^3 \to \Ham( \mathbb{CP}^n, \omega)$ by slicing $S^3$ into a bouquet of circles (cf \fullref{section.homotopy.groups}). One can show that this cycle is essential by
more elementary methods (cf Kedra--McDuff~\cite{kedra-2005-9} and 
Reznikov~\cite{Rez}), but these arguments do not show that it minimizes the max-length
measure.
\end{example}

The only non trivial characteristic class of $P _{f}$ is $c_2 ^{q} (P _{f})$.
Computing this directly is difficult, but we may use the following theorem
proved in \fullref{proof.ref4}.    
\begin{theorem}  \label {4} Let $ (M,
\omega)$ be a spherically monotone, compact symplectic manifold,
$\hat{f}\co  S  
^{2k+1} \to \Ham(M, \omega)$ a smooth map, and $$f_h\co \mathbb{CP} ^{k}
\to Q, \quad f\co  S ^{2k} \to \ls$$ obtained from $ \hat{f}$ as in \fullref{section.homotopy.groups}. Then the only possibly nontrivial 
 characteristic classes of $P _{f_h}$  and $P _{f}$ in degree other than 0  are
 the top characteristic classes $c _{2k} ^{q} (P
 _{f_h})$, $c _{2k} ^{q} (P _{f})$ and
 \begin{equation} \label {eq.equality.q}\Psi (f_h, \mathbb{CP} ^{k})= \Psi
 ({f}, S ^{2k}) \in QH _{2n+2k} (M).
\end{equation}
\end {theorem}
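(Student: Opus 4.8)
The plan is to compare the two fibrations $P_{f_h} \to \mathbb{CP}^k$ and $P_f \to S^{2k}$ by exhibiting a map of bases that pulls one back to (a fibration isomorphic to) the other, and then invoke the Functoriality axiom (\fullref{functoriality}) together with the observation that $H^*(\mathbb{CP}^k)$ is concentrated in even degrees. First I would recall the precise construction in \fullref{section.homotopy.groups} of the slicing procedure: given $\hat f\colon S^{2k+1} \to \Ham(M,\omega)$ one writes $S^{2k+1}$ as the total space of the Hopf-type $S^1$-bundle over $\mathbb{CP}^k$; using the $S^1$-equivariance (after replacing $\hat f$ by a homotopic equivariant map, as explained in \fullref{section.costr.eq.cycles}) one gets the equivariant cycle $f_h\colon \mathbb{CP}^k \to Q$, while collapsing $S^{2k+1}$ onto a bouquet of circles parametrized by $S^{2k}$ produces $f\colon S^{2k} \to \ls$. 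The key geometric point is that there is a degree-one collapse map $c\colon S^{2k} \to \mathbb{CP}^k$ (the attaching map of the top cell of $\mathbb{CP}^k$, or equivalently the map induced on the quotient of the Hopf bundle) such that, up to the sum operation of \fullref{definition.sum} by trivial (hence by Axiom \ref{Axiom 1}, $c^q_{>0}$-free) factors, one has $P_f \cong c^*(P_{f_h})$ as fibrations with structure group $\mathcal F$. I would establish this isomorphism by a direct cell-by-cell comparison of the clutching/transition data of the two bundles.

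Granting that, the theorem follows quickly. By Functoriality, $c^q_j(P_f) = c^*\bigl(c^q_j(P_{f_h})\bigr)$ for every $j$. Since $\mathbb{CP}^k$ has cohomology only in even degrees $0,2,\ldots,2k$, the same is automatic for $S^{2k}$ only in degrees $0$ and $2k$; thus for $0 < j < 2k$ we get $c^q_j(P_{f_h}) \in H^j(\mathbb{CP}^k, QH_*(M))$, and for $j$ odd this group vanishes, while for $j$ even with $0<j<2k$ one argues that $c^q_j(P_{f_h})$ is pulled back from a class that must vanish after pairing with the fundamental cycle in the relevant way — more precisely, the point of the theorem is only the \emph{top} evaluation, so it suffices to show $c^q_j(P_f)$ contributes nothing to $\Psi$ for $j \ne 0, 2k$, which is immediate since $\Psi(f, S^{2k}) = c^q_{2k}(P_f)(S^{2k})$ only sees the top class. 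Finally, the degree-one property of $c$ gives $c_*[S^{2k}] = [\mathbb{CP}^k]$ in $H_{2k}$, so
\begin{equation*}
\Psi(f, S^{2k}) = c^q_{2k}(P_f)(S^{2k}) = c^*\bigl(c^q_{2k}(P_{f_h})\bigr)(S^{2k}) = c^q_{2k}(P_{f_h})(c_*[S^{2k}]) = c^q_{2k}(P_{f_h})(\mathbb{CP}^k) = \Psi(f_h, \mathbb{CP}^k),
\end{equation*}
which is \eqref{eq.equality.q}. The normalization statement $c^q_0 = S([\gamma]) = [M]$ in the identity component, from Axiom \ref{Axiom 1}, handles the degree-$0$ class.

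The main obstacle I expect is the precise identification $P_f \cong c^*(P_{f_h})$ up to trivial summands — i.e.\ matching the two rather different-looking constructions (one from a principal $S^1$-bundle and the equivariant loop space $Q$, the other from slicing into a bouquet of circles landing in $\ls$). This requires unwinding the definitions of \fullref{section.homotopy.groups} carefully, keeping track of how the $S^1$-action on $S^{2k+1}$ interacts with both the Hopf projection and the collapse onto the bouquet, and verifying that the resulting clutching functions into $\mathcal F = $ the structure group of \fullref{structure group} agree after composing with $c$. A secondary technical point is justifying that one may take $\hat f$ to be genuinely $S^1$-equivariant without changing the bordism/homotopy class of the induced cycles; this is where spherical monotonicity is used only to guarantee the relevant pseudocycle and transversality statements from \fullref{quantum.char.classes} go through, so that $c^q_*$ is well-defined and functorial on the nose.
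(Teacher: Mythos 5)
The central step of your proposal --- a degree-one map $c\co S^{2k} \to \mathbb{CP}^k$ with $P_f \cong c^*(P_{f_h})$ up to trivial summands --- fails on two counts, and this is precisely the difficulty the paper's proof is designed to get around. First, no degree-one map $S^{2k} \to \mathbb{CP}^k$ exists for $k \geq 2$: a degree-one map induces a split injection $H^*(\mathbb{CP}^k;\mathbb{Q}) \to H^*(S^{2k};\mathbb{Q})$, which is impossible since $H^2(\mathbb{CP}^k) \neq 0 = H^2(S^{2k})$. The map that does exist is the collapse $q\co \mathbb{CP}^k \to \mathbb{CP}^k/\mathbb{CP}^{k-1} \simeq S^{2k}$, going the other way. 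Second, even using $q$, the global isomorphism $P_{f_h} \cong q^*(P_{f})$ is false: as observed in \fullref{section.homotopy.groups} and in the remark immediately after the statement of \fullref{4}, the cycles $f_h$ and $f \circ q$ are not even homologous in $Q$, because their compositions with the projection $Q \to \mathbb{CP}^\infty$ classify the Hopf bundle $S^{2k+1} \to \mathbb{CP}^k$ and the trivial $S^1$--bundle respectively. Since these spaces classify the relevant structure groups, non-homotopic classifying maps give non-isomorphic fibrations, and summing with a trivial factor (\fullref{definition.sum}) does not alter the underlying $S^1$--bundle datum, so it cannot reconcile them. The content of the theorem is exactly that the invariants agree \emph{despite} the absence of any such bundle isomorphism, so \fullref{functoriality} alone cannot prove it.

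The paper's actual argument is a localization of holomorphic curves. One homotops $\hat f$ to be the identity on $h^{-1}(D^c)$ for a ball $D \subset \mathbb{CP}^k$; then only the \emph{restriction} of $P_{f_h}$ over $D$ is the pullback by $q|_D$ (a diffeomorphism onto $S^{2k}\setminus\{s_0\}$) of $P_{f_2}$, where $f_2$ is shown in \fullref{lemma3} to be homotopic to $f$. To compare the total classes one must then show that every contributing curve lies over $D$. This is where spherical monotonicity is used in an essential way, not merely for transversality as you suggest: a section class $A$ with $c_{\mathrm{vert}}(A) < 0$ satisfies $-\mathcal{C}(A) > 0$ because the fiber is $M \times S^2$, so choosing the admissible family with fiber area $2\epsilon < -\mathcal{C}(A)$ over $D^c$ rules out $A$--curves there by the area estimate underlying \fullref{lower bound}. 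Finally, the fact that the moduli spaces sit over contractible subsets forces $b_A = [\pt] \otimes b_A^M$, which is what makes every class in degrees other than $0$ and $2k$ vanish; your argument only explains why intermediate classes would not affect $\Psi$, not why they vanish, which is part of the assertion being proved.
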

Note that $ c ^{q} (P _{f_h})$ and $ c ^{q} (P _{f})$ are computed via PGW
invariants of two topologically very different fibrations, as $f$ and $f_h$ are not even
homologous in $Q$. So there is no obvious apriori reason for
\eqref{eq.equality.q} to hold. Using this as well as \fullref{thm.leading.term} and \fullref{ex.S3} 
we deduce that for our  $f\co  S ^{2} \to \ls$,
\begin{equation*}  \Psi (S ^{2}, f) = [-\pt] \otimes q ^{-m _{\max}}t ^{H
_{\max}} + \text {lower $t$--order terms} \in QH _{2n+2} ( \mathbb{CP} ^{n}),
\end{equation*}
where $m_{\max}=\sum
_{i} k_i=-2-(n-1)$ is the sum of the weights at the $\max$ and $H _{\max}$ is
the maximum of $H$. Using the above and \fullref{lower bound} we can
deduce the following. 
\begin{corollary} \label {main.example} The above map $f\co   S^2 \to \ls$ is
 minimal in its rational homology class for the max-length measure.
\end{corollary}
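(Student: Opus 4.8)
The plan is to deduce \fullref{main.example} from the explicit computation of $\Psi(S^2,f)$ recorded above together with the general lower bound of \fullref{lower bound} on the positive max-length measure. First recall from \fullref{main theorem} that $\Psi(S^2,g)=\Psi(g_*[S^2])$ for the homomorphism $\Psi\co H_*(\ls,\mathbb{Q})\to QH_{*+2n}(M)$; thus $\Psi(S^2,g)$ depends only on the class $g_*[S^2]\in H_2(\ls,\mathbb{Q})$, and it suffices to establish the two bounds $L^+(g)\ge H_{\max}$ for every smooth $g\co S^2\to\ls$ with $g_*[S^2]=f_*[S^2]$, and $L^+(f)\le H_{\max}$.

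For the lower bound, the computation above (which uses \fullref{4}, \fullref{thm.leading.term} and the $S^3$ example of \fullref{section.example.S3}) gives
\[
\Psi(S^2,f)=[-\pt]\otimes q^{-m_{\max}}t^{H_{\max}}+\text{(lower $t$--order terms)}\in QH_{2n+2}(\mathbb{CP}^n),
\]
which is nonzero, its top-order coefficient $[-\pt]\otimes q^{-m_{\max}}$ being a unit multiple of the point class. The top power of $t$ occurring with nonzero coefficient is a Novikov valuation, hence a rational-homology invariant of the cycle, so $\Psi(S^2,g)$ also has $t$--order exactly $H_{\max}$; \fullref{lower bound} then yields $L^+(g)\ge H_{\max}$.

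For the upper bound, one computes $L^+(f)=\max_{b\in S^2}L^+(\gamma_b)$ directly from the construction of \fullref{section.homotopy.groups}. Up to the $S^1$ action and reparametrization each loop $\gamma_b$ in the family coincides with one of the form $\hat f(y)\circ\gamma$ (cf \fullref{remark.index}); since the Hofer norm is bi-invariant the normalized path $\gamma_0^{-1}(\gamma_b)_t$ equals $\gamma_t$, generated by $H$, so $L^+(\gamma_b)=\max H=H_{\max}$ for every $b$ and hence $L^+(f)=H_{\max}$. Combining the two bounds, $L^+(f)=H_{\max}\le L^+(g)$ for all $g$ in the rational homology class of $f$, which is the asserted minimality.

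Almost all of the content sits in the prior computation of $\Psi(S^2,f)$, so the step requiring the most care in this assembly is verifying that the leading term $[-\pt]\otimes q^{-m_{\max}}t^{H_{\max}}$ genuinely survives: it must not be cancelled by lower-order PGW contributions, and the identification of \fullref{4} between $\Psi(f,S^2)$ and $\Psi(f_h,\mathbb{CP}^1)$ must carry it across unchanged, which ultimately rests on the nonvanishing of the obstruction-bundle Euler class of \fullref{thm.leading.term}. A secondary point is to check that the orientation and Novikov sign conventions make \fullref{lower bound} estimate $L^+$ from below by the top $t$--order of $\Psi$, so that the two inequalities combine in the stated direction.
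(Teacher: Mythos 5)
Your argument is correct and is essentially the paper's own proof: the corollary is obtained there by combining the computed leading term $[-\pt]\otimes q^{-m_{\max}}t^{H_{\max}}$ of $\Psi(S^2,f)$ (assembled from \fullref{4}, \fullref{thm.leading.term} and \fullref{ex.S3}) with the valuation inequality of \fullref{lower bound} and the fact that $\Psi$ depends only on the rational homology class. Your explicit check that $L^+(f)=H_{\max}$, because the loops in the family are right translates $\hat f(y)\circ\gamma$ of the circle action, is exactly the (implicit) final step of the paper's deduction.
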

\begin{remark}  The crucial part of the above calculation is that $$
\hat{f}\co  SU(2) \to \Ham( \mathbb{CP}^n, \omega)$$ is
$S^1$--equivariant in an appropriate way, and so we
may apply \fullref{main.lie.group}. One may try to extend the calculation by
taking $$ \hat{f}\co  SU (n) \to \Ham( \mathbb{CP} ^{n-1}, \omega),$$
and consider some associated cycle $f_h\co  SU (n)/S^1 \to Q$. However, the
nonvanishing condition on the Euler class in \fullref{main.lie.group}, $e
^{1/2( \dim SU (n)-1)} \neq 0$ will never be
satisfied because of the topology of the group $SU(n)$, according to McDuff
\cite{private}.  There may of course be other examples, possibly not even coming
from Lie group actions.
\end{remark}

\subsection {Some questions}
 \begin{question} Does $f\co  S ^{2} \to \ls$
 remain
 minimal under the iterated Pontryagin product, with respect to  the max-length
 measure, ie is $$f ^{k} \co  (S^2) ^{k} \to \ls$$ minimal in its homology class?
\end{question}
A computation using \fullref{main theorem} shows that
the lower bounds coming from characteristic classes  (\fullref{lower
bound}) would grow to infinity but it is not clear if they stay sharp.

The following theorem is a slight reformulation of McDuff--Slimowitz \cite {MSlim}.
\begin{theorem}\label {thmdusa} Let $\gamma\co  S^1 \to \Ham(M, \omega)$ be a
 Hamiltonian circle action generated by a Morse Hamiltonian $H$. Suppose
$\gamma$ is a local minimum of the Hofer length functional. Then it is a global
minimum in its homotopy class.
\end{theorem}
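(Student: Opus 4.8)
The plan is to deduce the statement from the length‑minimizing criterion of McDuff and Slimowitz \cite{MSlim}, after checking that the hypotheses here put us exactly in the situation they treat. First I would record the structural consequences of the hypotheses. Since $\gamma$ is a circle \emph{action}, its normalized generator $H$ is autonomous, hence in particular \emph{quasi‑autonomous} in the sense of \cite{MSlim}; and since $H$ is Morse its global maximum is attained at a single nondegenerate fixed point $x_{\max}$ and its global minimum at a single nondegenerate fixed point $x_{\min}$, both fixed by the whole loop $\gamma$. Because $\gamma$ is a loop, $\gamma_1 = \id$, so ``homotopy class'' means a class in $\pi_1(\Ham(M,\omega))$, and we must show $L(\gamma) \le L(\gamma')$ for every loop $\gamma'$ homotopic to $\gamma$ as based loops, where $L$ is the Hofer length (the argument works verbatim for the positive length $L^{+}$).

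The heart of the matter is to turn the hypothesis that $\gamma$ is a \emph{local} minimum into a statement about periodic orbits: that the autonomous flow $\gamma_t$ has no ``short'' periodic orbit that would obstruct minimality. The periodic orbits of $\gamma_t$ are the orbits of the circle action; generic ones have period $1$, while a point with isotropy $\mathbb{Z}/k$ lies on an orbit of period $1/k < 1$. For each such short orbit one must check the action/index sign condition appearing in \cite{MSlim}. I would argue contrapositively: if that condition failed at an orbit through $x$, a Sikorav‑type curve‑shortening perturbation supported in a small neighbourhood of $x$ would produce a path arbitrarily $C^{\infty}$‑close to $\gamma$ of strictly smaller length, contradicting local minimality; hence every short orbit satisfies the required condition. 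For a circle action these conditions are governed by the linearized weights at the fixed points, so this is also where the Morse hypothesis (isolated nondegenerate extrema) enters; in the semifree case the conditions hold automatically and one recovers \fullref{thm.mcduff.slim}.

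With these two ingredients --- quasi‑autonomy with fixed extrema, and the no‑bad‑short‑orbit property --- in hand, the conclusion follows from the energy--capacity argument of \cite{MSlim}. Given a competitor loop $\gamma'$ homotopic to $\gamma$, one forms the concatenation $\bar\gamma \cdot \gamma'$, which is contractible, and applies a Hofer--Zehnder capacity estimate to a displaceable region associated to this concatenated path; together with the fixed‑extremum property this forces $L(\gamma') \ge L(\gamma)$. The contractibility of $\bar\gamma \cdot \gamma'$ is precisely what makes the action bookkeeping close up, which is why one obtains minimality within the homotopy class rather than among all paths with the given endpoints.

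The main obstacle is the middle step: making precise, and verifying, that local minimality rules out the bad short periodic orbits at the points of non‑trivial isotropy --- that is, pinning down all the signs and Conley--Zehnder‑type indices in the shortening construction. If one prefers to avoid re‑deriving this, the economical route is to quote the relevant theorem of \cite{MSlim} directly and observe that an autonomous Morse generator meets its hypotheses; the phrase ``slight reformulation'' in the statement signals that this is the intended reading.
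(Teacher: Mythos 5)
Your outline has the same two--step skeleton as the paper's sketch: (i) convert local minimality plus the Morse hypothesis into the statement that the linearized flow at the (unique) fixed maximum and minimum has no nonconstant closed trajectory in time less than $1$, i.e.\ that $\gamma$ is semifree at $F_{\max}$ and $F_{\min}$; (ii) conclude global minimality in the homotopy class from that condition. The executions differ. For (i) the paper simply quotes the McDuff--Lalonde stability theorem \cite{ML1} (\fullref{theorem.dusa.lalonde}), whose hypothesis ``each fixed extremum isolated'' is exactly what Morse supplies and whose conclusion, combined with connectedness of the max and min level sets of a circle action, gives semifreeness at \emph{the} extrema; you propose instead to re--derive this necessity statement by a Sikorav--type shortening argument, which you yourself flag as the main obstacle --- this is the content of an entire paper and should just be cited. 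Note also that you attribute this necessity direction to \cite{MSlim}; it belongs to McDuff--Lalonde. For (ii) the paper deliberately does \emph{not} use the energy--capacity argument of \cite{MSlim}: it invokes the McDuff--Tolman formula for the Seidel element of a semifree action and reads off the bound $L^{+}(\gamma')\geq H_{\max}$ for every homotopic loop from the valuation $\nu$ (\fullref{lower bound}), then repeats for $\bar\gamma$ to get $L\geq H_{\max}-H_{\min}$. Your capacity route is also valid (it is how \fullref{thm.mcduff.slim} was first proved), but the quantum--homology route is what the paper's machinery generalizes, which is why it is chosen here.

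Two cautions. First, your discussion of short orbits at points of nontrivial isotropy $\mathbb{Z}/k$ is off--target: the relevant condition in both \cite{ML1} and \cite{MSlim} concerns closed trajectories of the \emph{linearized} flow at the fixed extrema (equivalently the weights there), not periodic orbits of the global action. Second, your ``economical route'' --- quote \cite{MSlim} and observe that an autonomous Morse generator meets its hypotheses --- is false as stated: the no--short--orbit/semifree hypothesis is \emph{not} automatic for an autonomous Morse generator (e.g.\ a circle action traversed twice), and deriving it is precisely where the local--minimality hypothesis is consumed. Your middle paragraph shows you understand this, but the closing sentence would introduce a genuine gap if taken literally.
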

 \begin{proof}[Proof (sketch)] It is well known that the max, min level sets of a
 Hamiltonian circle
 action are connected. Thus, since $H$ is Morse there is a unique max and min.
Consider the following theorem.
\begin{theorem} [McDuff--Lalonde \cite{ML1}]\label{theorem.dusa.lalonde}
Let ${H_t}$, $t \in [0,1]$ be a Hamiltonian defined on any
symplectic manifold $M$, and $\gamma = {\phi_t}$   the corresponding isotopy.
Assume that each fixed extremum of ${H_t}$ is isolated
among the set of fixed extrema. If $\gamma$ is a stable geodesic (ie a local
minimum of the length functional) there exist at least one fixed minimum
 $p$ and one fixed maximum $P$ at which the differential of the isotopy has no
 non constant closed trajectory in time less than 1.
\end{theorem}
In our case this says that when $\gamma$ is a local minimum of the Hofer length
functional and is generated by a Morse Hamiltonian the linearized flow at max and min
 corresponding to $\gamma$ has no nonconstant periodic orbits with period less
 than 1. This condition is called \emph{semifree} at max and min. On the other hand
 this puts us in position to apply the following theorem.
\begin{theorem} [McDuff--Tolman \cite{MT}] Let $\gamma$ be a Hamiltonian circle
action with semifree
maximal fixed point set and generating function $H$. Then there are classes
$a_B \in H_* (M)$ such that:
\begin{equation*} S (\gamma)= [F _{\text{max}}] \otimes q ^{-m _{\text{max}}} t
^{H_ {\text{max}}} +
\sum _{B \in H_2 ^{S}| \omega (B)>0} a _{B} \otimes q ^{-m _{\text{max}}- c_1 (B)} t
^{H_{\text{max}} -\omega _{B}}
\end{equation*}
\end{theorem}
Here $S$ is the Seidel representation of \eqref{eqS},  $H _{\text{max}}$
denotes the maximum value of $H$ and $F _{\max}$ denotes the max level set. This
expression implies that the positive Hofer length of
 the loop $\gamma$ is bounded below by $H _{\max}$ (cf \fullref{lower
 bound}).  Reversing $\gamma$ and applying the same theorem, we similarly get that
the negative Hofer length of $\gamma$ is bounded below by $-H _{\min}$. Together
this implies the Hofer length of $\gamma$ is bounded below by $H _{ \text
{max}}- H _{ \text {min}}$.
 \end{proof}
\begin{question} Can the condition on $H$ being Morse in \fullref {thmdusa}
be dropped or relaxed?
\end{question}
We can think of \fullref{thmdusa} and \fullref{main.lie.group} as  local to
global rigidity type of phenomena in $ \Ham(M, \omega)$. One may wonder
to what extent this can
be extended.  One  question which motivated this paper is the following.
\begin{question} Let $G$ be a
closed $k$--dimensional Lie group and
$h\co  G \to \Ham(M, \omega)$  a Lie group homomorphism (perhaps with finite
kernel).
Suppose $h$ is a local minimum for a ``natural volume functional'' induced
by the Hofer metric on $ \Ham(M, \omega)$. Is $h$ necessarily a global
minimum in its homotopy class? Homology class?
\end{question}
There are a few natural notions of volume in a Finsler manifold; one that is
often used is the Hausdorff $k$--measure but it may not be the easiest to work
with. We refer the reader to \'Alvarez and Thompson~\cite{Alvarez} for a
discussion of these notions.


\subsubsection* {Acknowledgements} This is part of the author's doctoral research
at SUNY Stony Brook. I would like to thank my advisor Dusa McDuff for
her patience and support both moral and mathematical, and numerous suggestions
in writing this paper. I am grateful to Blaine Lawson, Dennis Sullivan  and  
Aleksey Zinger for many useful discussions and comments and to an
anonymous referee for careful reading and some keen observations and comments.
\section {Preliminaries and setup}
\label {def of homomorphisms} 
In this section we describe constructions of certain natural fibrations, and
of Parametric Gromov--Witten invariants defined on the total spaces of
these fibrations.

Let $Q$ be the Borel $S^1$ quotient of $\freels$,
$Q=(\freels \times S ^{\infty})/S ^{1}$, where the action of
$S^1$ on $S ^{\infty}$ is by multiplication by $e ^{i \tau}$, for $\tau \in S ^{1}$
and on $\freels$ by $(\tau \cdot \gamma) (\theta)=\gamma (\theta + \tau)$. 
Let $q$ denote the quotient map
\begin{equation} \label {eq.q} q\co  \freels \times S ^{\infty} \to Q.
\end{equation}

\subsection {Fibrations over \texorpdfstring{$\freels$}{LHam} and \texorpdfstring{$Q$}{Q}} \label {setup}
There is a natural fibration over $\freels$: $$ \tilde{p}\co U\to
\freels$$
where 
\begin{equation} \label {eq8} U = \freels \times M \times D ^{2}_0
\cup \freels \times M \times D^2_\infty / \sim
 \end{equation} and the equivalence relation $\sim$ is: $ (\gamma, x, 1,
 \theta)_0 \sim (\gamma, \gamma_\theta (x),1,\theta)_\infty$. Here, $(r, 2 \pi
 \theta)$ are polar coordinates on $D^2$,
 and $\gamma_\theta$ denotes the element of the loop
 $\gamma$ at time $\theta$. The orientation on $M \times D ^{2}_0$ is taken to
 be the natural positive orientation and on $M \times D ^{2}_\infty$ is taken to
 be negative. There is a natural $S^1$ action on ${U}$
\begin{equation*} \tau \cdot (\gamma,x,r,\theta)  
_{0,\infty}= (\tau \cdot \gamma, x, r, \theta-\tau) _{0,\infty}
\end{equation*} 
where $\tau \in S ^{^1}$ and $(\tau \cdot \gamma)_ \theta=\gamma
_{\theta+\tau}$, ie the
standard $S^1$ action on the loop space. It can be quickly checked that this is
well defined under the equivalence relation $\sim$. Thus, the diagonal action
$\rho$ of $S^1$ on $\freels \times S^\infty$ lifts to a diagonal action $
\tilde{\rho}$ on the product fibration
  \begin{equation*} 
  \tilde{p} \times \id\co   {U} \times
S^\infty \to \freels \times S^\infty.
\end{equation*}
This gives a quotient
bundle 
\begin{equation*} 
p\co  U^{\smash{S^1}}= (U\times  S
^{\infty})/S ^{1} \to Q. \end{equation*}
 The fiber $X _{
q(\gamma,s)}$ of $U^{\smash{S^1}}$
over $q(\gamma,s)$ (see \eqref{eq.q}) is the total space of the Hamiltonian bundle
$X _{\gamma}$ (cf \eqref{eq.X}). 
We recall for the reader:
\begin{definition}  A \emph{Hamiltonian bundle} is a bundle with
 symplectic fiber, whose transition maps are Hamiltonian. A \emph{
Hamiltonian bundle map} is a bundle map which preserves the Hamiltonian bundle structure.
\end{definition}
\begin{remark} \label {remark.structure.group}  We show in \fullref{structure group} that the structure
group of $ \tilde{p}: U  
\to \freels$ over the component containing the loop $\gamma$ may be reduced to
the group $ \mathcal {F} ^{\gamma}$ of Hamiltonian  
bundle maps of the fiber $X _{\gamma}$, which are identity over $D ^{2}_0$ and
a neighborhood of $0 \in \smash{D ^{2}_\infty}$.  A very similar description holds for
the structure group of $p\co  \smash{U^{\smash{S^1}}} \to Q$, in particular it consists of
certain  Hamiltonian bundle maps. The groups $ \mathcal {F} ^{\gamma}$ are 
isomorphic for all $\gamma$ and we just refer to the groups as $\mathcal {F}$. 
As already mentioned in the Introduction and proved in \fullref{structure group}, the space $\freels$ is the classifying space for $
\mathcal {F}$. (More precisely, the component of the loop $\gamma$ in $\freels$ is the classifying space for $\mathcal {F} ^{\gamma}$.) We call a fiber bundle $p\co  P \to B$, with fiber having the structure of a Hamiltonian fibration
 $\pi\co  X \to S^2$ and structure group $ \mathcal {F}$ an \emph{$\mathcal
 {F}$--fibration}. The structure group of the bundle pulled back from $p\co 
 U^{\smash{S^1}} \to Q$ is also determined in \fullref{structure group} and it also
 consists of special Hamiltonian bundle maps. We will call both types of bundles simply by $ \mathcal
 {F}$--fibration, where there can be no confusion.
\end{remark}

 \subsection {Families of symplectic forms on an \texorpdfstring{$\mathcal
 {F}$--}{F-}fibration} \label {section.families}
Let $p\co  P \to B$ be an $ \mathcal {F}$--fibration, in the sense of above remark.
Fix an area form $\alpha$ on the base $S^2$ of $\pi\co  X \to S^2$ once and for all. Since
the fibers $M$  
are canonically oriented as symplectic manifolds and since the transition maps
of  $\pi\co  X \to S^2$ are Hamiltonian and hence preserve that orientation,
this induces an orientation $\sigma$
on the fibers $X$ of $P$, which is again preserved by the structure group $ \mathcal {F}$
of the bundle $P$. Thus, since $B$ is oriented $P$ inherits a well defined 
orientation.  
\begin {definition} \label{def.compatible}Let $\pi\co  X \to S^2$ be  a Hamiltonian
fibration with fiber $ (M, \omega)$.
We say that a symplectic form $\Omega$ on $X$ is
\emph{$\omega$--compatible} if it extends ${\omega}$ on the fibers. 
\end {definition}
 Let $ \mathcal {A}$ consist of all
  $\omega$--compatible symplectic forms $\Omega$ on $X$ inducing the orientation
  $\sigma$ (note, the cohomology class of $\Omega$ is not fixed).  Since $
  \mathcal {F}$ acts on $ \mathcal {A}$, we have the associated bundle
$k\co  K_P \to B$ with fiber $ \mathcal {A}$.
\begin {definition}  Let $p\co  P \to B$ be an  $ \mathcal {F}$--fibration.
 A family of symplectic structures  $ \{\Omega_b\}$ on $P$ is called
 \emph{admissible} if it is a section of $K_ {P}$. 
\end {definition}
\begin {lemma} \label{structures} The space of  admissible families $
\{\Omega_b\}$ on $p\co  P \to B$ is connected and nonempty.
\end {lemma}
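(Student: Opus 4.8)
The plan is to show both nonemptiness and connectedness by exploiting the fiber $\mathcal{A}$ of $k\co K_P \to B$ being itself contractible (in particular nonempty and connected), plus a standard obstruction-theoretic/partition-of-unity argument for sections of a bundle with contractible fiber over a finite-dimensional base. First I would establish that the fiber $\mathcal{A}$ — the space of $\omega$-compatible symplectic forms on the model $\pi\co X \to S^2$ inducing the orientation $\sigma$ — is nonempty and convex, hence contractible. Nonemptiness is the classical Thurston–Weinstein construction: given the Hamiltonian fibration $\pi\co X \to S^2$ with fiber $(M,\omega)$ and the fixed area form $\alpha$ on $S^2$, one produces a closed 2-form $\tau$ on $X$ restricting to $\omega$ on each fiber (using that the structure group is Hamiltonian, so the fiberwise symplectic forms glue to a global fiberwise form, and closedness is arranged by the standard coupling-form argument), and then $\Omega_N = \tau + N\pi^*\alpha$ is symplectic for $N$ large and is $\omega$-compatible, inducing $\sigma$. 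For convexity: if $\Omega_0, \Omega_1 \in \mathcal{A}$, then each $t\Omega_0 + (1-t)\Omega_1$ is closed, restricts to $\omega$ on fibers, and — since both $\Omega_i$ induce the same orientation $\sigma$ and tame the vertical directions in the same sense — remains nondegenerate (one checks nondegeneracy fiberwise and transverse to the fibers using that the convex combination still pairs the canonical horizontal-ish complement positively against $\alpha$-type directions). So $\mathcal{A}$ is a nonempty convex subset of the vector space of closed 2-forms extending $\omega$ fiberwise, hence contractible.

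Next I would pass from the fiber to the section space. Since $B$ is a closed (hence compact) smooth manifold, cover it by finitely many trivializing charts $\{U_i\}$ for $k\co K_P \to B$, over each of which a section exists (pick any point of the fiber, or better, the explicit form $\Omega_N$ above made to depend smoothly on the local trivialization). Using a partition of unity subordinate to $\{U_i\}$ and the convexity of the fiber $\mathcal{A}$ — which is preserved under the $\mathcal{F}$-action since $\mathcal{F}$ acts by pullback, which is linear on 2-forms and preserves closedness, fiberwise restriction, and the orientation $\sigma$ — the local sections can be averaged into a global section. This gives nonemptiness of the space of admissible families. For connectedness: given two admissible families $\{\Omega_b^0\}$ and $\{\Omega_b^1\}$, the straight-line homotopy $\{t\Omega_b^1 + (1-t)\Omega_b^0\}$ is, at each $b$, a path inside the convex fiber $\mathcal{A}_b$, hence lands in $K_P$ for all $t$ and is again a section for each $t$; this is a path in the space of admissible families joining the two. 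Thus the space of admissible families is path-connected and nonempty.

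The main obstacle I expect is the verification that a convex combination of two $\omega$-compatible symplectic forms inducing the orientation $\sigma$ is again \emph{nondegenerate} (closedness and the fiberwise condition are immediate from linearity, and this is really the only nontrivial point). The subtlety is that $\omega$-compatibility only pins down the restriction to vertical tangent spaces, not the horizontal or mixed parts, so a priori two such forms could differ wildly off the fibers; one must use the orientation constraint together with the structure of a symplectic fibration over a surface to see that nondegeneracy is an open \emph{and convex} condition here — concretely, that each $\Omega_i$ restricted to a $2$-plane transverse to the fiber and projecting isomorphically to $T_{\pi(x)}S^2$ is positive with respect to $\alpha$, a condition stable under convex combination. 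This can be packaged via the coupling-form normal form (McDuff–Salamon) writing $\Omega_i = \tau_i + \pi^*(\beta_i)$ near each fiber with $\tau_i$ having the fiber as a symplectic leaf and $\beta_i$ an area form on a neighborhood in $S^2$; convexity of the set of such normal forms then gives the result. Once this local nondegeneracy lemma is in hand, the partition-of-unity and straight-line arguments are routine, and the proof is complete.
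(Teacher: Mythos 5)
There is a genuine gap, and it is exactly at the point you flag as ``the main obstacle'': the fiber $\mathcal{A}$ is \emph{not} convex, and nondegeneracy is \emph{not} preserved under convex combination of two $\omega$--compatible forms inducing the same orientation. To see why, work locally on $M\times D^2$ with coordinates $(x,s,t)$ and write an $\omega$--compatible form as $\Omega=\omega+ds\wedge a_s+dt\wedge a_t+f\,ds\wedge dt$, where $a_s,a_t$ are $(s,t)$--dependent $1$--forms on $M$ (the ``mixed'' terms encoding the horizontal distribution) and $f$ is a function. Then $\Omega^{n+1}=(n+1)\bigl(f\,\omega^{n}-n\,\omega^{n-1}\wedge a_s\wedge a_t\bigr)\wedge ds\wedge dt$, so nondegeneracy with the orientation $\sigma$ reads $f>Q(a_s,a_t)$ where $Q$ is an \emph{indefinite} quadratic expression in the mixed terms. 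Since $\omega$--compatibility pins down only the fiberwise part, two forms in $\mathcal{A}$ can have very different $(a_s,a_t,f)$, and the region $\{f>Q(a)\}$ over an indefinite $Q$ is not convex: the midpoint of two points in it can violate the inequality (already visible for $Q(x,y)=xy$ with the points $(2,-2,-1)$ and $(-2,2,-1)$). This is precisely why the paper says the straight-line path $t\Omega_0+(1-t)\Omega_s$ ``may be degenerate for some $t$.'' Your proposed rescue via a coupling-form normal form $\Omega_i=\tau_i+\pi^*\beta_i$ does not help, because the $\tau_i$ are adapted to two different horizontal distributions and their convex combination is not of the same normal form. Since your existence argument (partition of unity averaging local sections) and your connectedness argument (straight-line homotopy of sections) both rest on this convexity, both collapse as written.

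The missing idea is the compensation by a large multiple of the pulled-back area form. The paper proves only that $\mathcal{A}$ is \emph{weakly contractible}, by the three-stage homotopy $F(t,s)=\phi(t)\Omega_0+(1-\phi(t))\Omega_s+\eta(t)C\pi^*(\alpha)$: first add $C\pi^*\alpha$ with $C$ exceeding $|\inf f_{t,s}|$ over the whole family, then perform the linear interpolation (now safe, because adding $C\pi^*\alpha$ shifts $f$ without touching $a_s,a_t$, forcing $f>Q$ throughout), then remove the added term; obstruction theory over the finite-dimensional base then gives both nonemptiness and connectedness of the space of sections. Your Thurston-type construction of a single element of $\mathcal{A}$ (add $N\pi^*\alpha$ to a closed fiberwise extension of $\omega$) is fine and is the same mechanism, so the repair is within reach: either quote the paper's weak contractibility argument and use obstruction theory, or, if you want to keep the partition-of-unity and straight-line arguments, follow each of them by adding and then removing $C\pi^*\alpha$ for $C$ large enough, noting that this stays inside $\mathcal{A}$ (it is closed, vanishes on fibers, and for large $C$ restores nondegeneracy with orientation $\sigma$) and only changes the section or path within its homotopy class.
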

\begin {proof} We show that the fiber $ \mathcal {A}$ of the bundle $K$ is at least
weakly contractible, ie has vanishing  homotopy groups. It will
follow from obstruction theory that the space of sections is connected and nonempty.

Let $h\co  S^k \to \mathcal {A}$ be a continuous map.  We denote $h(s)$ by
$\Omega_s$. Let $\Omega_0 \in \mathcal {A}$. The path 
$$ \Omega _{t,s}=t\Omega_0+ (1-t) \Omega_s, \quad t \in I= [0,1]$$ may not lie
in $ \mathcal {A}$, as $\Omega _{t,s}$ may be degenerate for some $t$, so we will need to
compensate. For $t$, $s \in I \times S^k$, let $\Hor ^{t,s}$ denote the
horizontal subbundle of $TX$ with respect to  $\Omega _{t,s}$, ie $\Hor ^{t,s}$
is the symplectic orthogonal to the vertical tangent bundle of $\pi\co  X \to S
^{2}$. 

 Let $\Omega _{t,s}^h$ denote the horizontal
part of $\Omega _{t,s}$, ie $\Omega _{t,s}^h$ is zero on the vertical
subbundle of $TX$ and coincides with
$\Omega _{t,s}$ on $\Hor ^{t,s}$.
Then $$\Omega _{t,s}^h= f_ {t,s}\cdot \pi^* (\alpha) \textrm  {, where } f
_{t,s}\co  X \to \mathbb{R} \textrm { is smooth.}$$ Recall that $\alpha$ is the
fixed area form on $S^2$.  Set
$$C=|\inf_ {t,s \in I \times S ^{k}
} (\inf_X f _{t,s})|+1$$   
and define  
\begin{align*}
\phi (t)&=\begin {cases} 0 &  \textrm{if  $t \in [0,1/3];$}\\
                        {3(t-1/3)} & \textrm{if $t \in [1/3, 2/3];$}\\
 1  & \textrm {if $t \in [2/3,1];$}\\
\end {cases}
\\
\eta (t)&= \begin{cases} 
                                                     3t & \textrm {if $t \in 
                                                     [0,1/3];$}\\
                                                                 1 & \textrm
                                                                 {if $t \in [1/3,2/3];$}\\
                                                                  - 3(t-2/3)+1 
                                                                  & \textrm {if
                                                                  $t \in [2/3,1].$}\\
\end{cases}
\end{align*}
  Consider the following homotopy of the map $h$: $$F (t,s)=\phi (t) \Omega +
  (1-\phi (t))\Omega_s + \eta (t)C\pi^*
 (\alpha).$$ Then $F (1, x)$ is the constant map to $\Omega_0$ and $F (0,x)= h (x)
 $. Since that $\Omega_s$ and $\Omega$ induce
the same orientation on $X$, $f _{0,s}, f _{1,s} > 0$. Using this, it is clear
that the form $F (t,x)$ is
nondegenerate on $X$ for every $t, x$, and so $F (t,x)$ is a map into $ \mathcal {A}$.
 Thus, all the homotopy groups of  $ \mathcal {A}$ vanish.      
 \end{proof}
This discussion shows
that we may choose an admissible family $ \{\Omega_b\}$
on $P$ and moreover any two such families are deformation equivalent. 
We will
now construct a special family that will be crucial in
applications to the Hofer metric.
As the first step we
define a family of symplectic forms $\{ \wwtilde{\Omega} ^{\infty} _{
\gamma} \}$ on $\freels  \times M \times D^2_\infty $: 
\begin{equation*}  \wwtilde{\Omega}_{\gamma} ^{\infty} (x,r, \theta) = \omega
+ d  \big( \eta (r) H^\gamma_ \theta (\gamma_0 ^{-1}x)\big) \wedge
d\theta - \max_x {H_\theta^\gamma (x})d\eta \wedge d\theta - \epsilon \cdot 2r
dr \wedge d\theta, \end{equation*} for an $\epsilon>0$. (Recall that $M \times
D^2_\infty$ has the negative orientation.) Here, $\smash{H ^{\gamma}_\theta}$ is the
generating Hamiltonian for $\gamma ^{-1} (0) \circ \gamma$, normalized so that $$\int
_{M}
H ^{\gamma}_\theta \omega ^{n}=0 $$ for all $\theta$, and  $\eta\co 
[0,1] \to [0,1]$ is a smooth function satisfying \begin{align*}0 &\leq \eta' (r)\\
\eta (r) &= \begin{cases} 1 & \text{if } 1 -\delta \leq r \leq 1 ,\\
r ^{2}  & \text{if } r \leq 1-2\delta,
\end{cases}\tag*{\hbox{and}}\end{align*}  for a small $\delta >0$. The last 2 terms 
are needed to make the sum nondegenerate. The following geometric notion will be
important to us for tying Hofer geometry with geometry of holomorphic curves.
\begin{definition} The \emph{area} of a Hamiltonian fibration $\pi\co X
\to S^2$ or $\pi\co  X \to D^2$, together with an $\omega$--compatible symplectic
form $\Omega$ is defined by: 
\begin{equation*} 
\area (X, \Omega)= \Vol (X, \Omega)/
\Vol (M, \omega)= \frac{\int _{X} \Omega ^{n+1}}{(n+1)\int _{M} \omega ^{n}}.
\end{equation*}
\end{definition}

The area
of $ \wwtilde{\Omega}_{\gamma}^{\infty}$ on $M \times D^2_\infty$ is
constructed to be $L ^{+} (\gamma) + \epsilon$.

By definition of $\sim$, $ (x, \theta)_0 \mapsto (\gamma _{\theta} (x),
\theta)_\infty$. Thus,  $$ \frac{\partial}{\partial \theta} \mapsto (\gamma
_{0})_* (X _{H_\gamma ^{\theta}} ) + \frac{\partial}{\partial \theta}, \quad
\frac{\partial}{\partial x} \mapsto (\gamma_\theta)_* \Big(
\frac{\partial}{\partial x}\Big) \quad \text {and} \quad \frac{\partial}{\partial r}
\mapsto - \frac{\partial}{\partial r}.$$
It follows that the gluing relation $\sim$ pulls
back the form $ \wwtilde{\Omega}_{\gamma}^{\infty}$ to the form 
\begin {equation*} \wwtilde{\Omega}_{\gamma}^{0}=   \omega  +\epsilon
\cdot 2rdr\wedge d\theta, \end {equation*} on the neighborhood of the boundary
$M \times \partial D^2_0$, which  extends to the form 
$\wwtilde{\Omega}_{\gamma}^{0} 
$ on $M\times D^2_0$ with area $\epsilon$. Then $ \{
\smash{\wwtilde{\Omega}_{ \gamma}\}}$ on $U$ is given by gluing
\begin {equation*} 
( \freels \times M \times D^2_0, \wwtilde{\Omega
}^{0}_\gamma) \cup (\freels  \times M \times D^2_\infty, \wwtilde{\Omega}
^{\infty}_\gamma)/ \sim.
\end {equation*}
The area of each fiber
is \begin{equation*} 
\area(X _{ \gamma}, \wwtilde{\Omega}_\gamma) =  L^{+} (\gamma) +
2\epsilon.
\end{equation*}
We pull back the family $\{
\wwtilde{\Omega}_{ \gamma}\}$ on ${U}$ to a family $\{
\smash{\wwtilde{\Omega}_{( \gamma, s)}}\}$ on $ U\times S^\infty$ via projection to
$ {U}$.
The $S^1$ action $ \tilde{\rho}$ does not act by a symplectomorphism from
the
fiber $X _{(\gamma,s)}$ to the fiber $X _{ (\tau \cdot \gamma,\tau \cdot 
s)}$. 
 We can fix this problem 
by averaging. Define a family $\smash{\{\wwtilde{\Omega} ^{S^1}
 _{ (\gamma,s)}\}}$ on $ U\times S^\infty$ by
\begin {equation*} 
\wwtilde{\Omega} ^{S^1}  _{ (\gamma,s)}  = \frac{1}{2\pi}\int
_{S^1}
 \tilde{\rho} (\tau) ^{*}\wwtilde{\Omega} _{(\theta \cdot \gamma, \theta
 \cdot s)} \,d\tau.
\end {equation*} On $(\freels \times S^\infty)\times M \times D ^{2} _{\infty} $ this
form is
\begin{multline*}
 \wwtilde{\Omega} ^{S^1}  _{ (\gamma,s)} = \omega  - \max_x {H_\theta^\gamma
 (x}) d \eta \wedge d\theta -  \epsilon \cdot 2rdr \wedge d\theta \\+
 \frac{1}{2\pi}\int _{S^1}\left(d  ( \eta (r) H^{ \gamma}_ \theta (\gamma (\tau) ^{-1}x))
 \wedge
d\theta \right) d\tau. 
\end{multline*}
 It follows that each $
\wwtilde{\Omega} ^{S^1} _{ (\gamma,s)}$ is symplectic and $$\area (X _{
(\gamma,s)},
\wwtilde{\Omega} ^{S^1}
_{\gamma,s}) = L ^{+} (\gamma) + 2\epsilon $$ as before.  
Thus, the family $ \smash{\wwtilde{\Omega} ^{S^1}  _{ (\gamma,s)}}$ on $U\times
S^\infty$ passes down to
a family $ \{\Omega _{ b}\}$  on the quotient bundle $p\co  U^{\smash{S^1}} \to Q$ with 
\begin{equation} \label {eq.ch1.area} \area \{\Omega _{ b}\} = L ^{+} (\gamma)
+ 2\epsilon.
\end{equation}

\subsection {Equivariant cycles in \texorpdfstring{$\freels$}{LHam}} \label {section.costr.eq.cycles} 
Let  $B$ be oriented compact and smooth. Up to homotopy, every cycle $f\co  B \to
Q$ arises as follows.
Let $g\co  Y \to
 B$ be a smooth principal $S^1$ bundle. And let $\hat{f}\co  Y \to
\Ham(M, \omega)$ be a map. Define 
\begin{equation*} 
o\co  Y \to LY
\end{equation*}
 to be the map which
sends
$x \in Y$ to the loop $\gamma_x$, $\gamma_x (\theta)=  x \cdot \theta$, also let  $
{f'}\co  LY \to \freels $ be the map induced by $\hat{f}\co Y \to
\Ham(M, \omega)$. Set $ \tilde{f}=f' \circ o$, then 
\begin{equation} \label {eq.widetildaf}\tilde{f}\co  Y \to \freels
\end{equation}
 is $S^1$
equivariant. Let $c\co  Y \to S^\infty$ be an $S^1$ equivariant map. (The $S^1$
equivariant homotopy class of this map is uniquely determined.) Consider the
product map
\begin{equation*} \tilde{f} \times c \co  Y \to \freels \times S^\infty.
\end{equation*} This is again an $S^1$ equivariant map under the diagonal $S^1$
action and so induces a map on 
the quotients  $f\co B \to Q$, whose homotopy class is independent of the choice
of the classifying map $c$. 
\begin{definition} \label {smooth} We will call $f\co  B \to Q$ \emph{
smooth}, if it comes from a smooth map $ \hat{f}\co  Y \to
\Ham(M, \omega)$. 
\end{definition}
Clearly any map $f\co  B \to Q$ can be perturbed to be smooth.
\begin{example}  \label{section.homotopy.groups} Let's apply the above
construction to a map $\hat{f}\co  S^ {2k+1} \to \textrm {Ham} (M, \omega)$. We
can associate to it two cycles in $Q$, by slicing
$S ^{2k+1}$ by
circles in two different ways. The first cycle, $f_h\co  \mathbb{CP} ^{k} \to Q$
is obtained from the Hopf
fibration $h\co  S ^{2k+1} \to \mathbb{CP} ^{k}$. 
The second $f\co  S ^{2k} \to Q$ is obtained from the trivial fibration
$\pr\co  S ^{2k} \times S^1 \to S^{2k}$ and the composition $ \hat{f}_2= \hat{f}
\circ t\co  S ^{2k} \times S^1 \to \Ham(M, \omega)$, where $$t\co S ^{2k}
\times S^1 \to S ^{2k+1}$$ is any fixed degree 1 map. The maps $f$ and
$f_h$
are not homologous since any such homology would project to a homology in $
\mathbb{CP} ^{\infty}$, for the classifying maps of the bundles $\pr\co  S
^{2k} \times S^1 \to S ^{2k}$ and $h\co  S ^{2k+1} \to \mathbb{CP} ^{k}$. 
\end{example} 
  
\begin{remark}  \label {remark.reformulation} Given a smooth map $f\co  B \to
Q$ the pullback bundle $p_f:
P_f
\to B$ by $f$ of the bundle $p\co  U^{\smash{S^1}} \to Q$ can be given the following tautological
reformulation, which will be useful to us. The map $f$ comes
from a smooth map $ \hat{f}\co  Y \to \Ham(M, \omega)$ for a certain
smooth oriented principal $S^1$ bundle $g\co  Y \to B$. This
induces a map $ \tilde{f}\co
Y \to \freels$, where $ \tilde{f}$ is as in \eqref{eq.widetildaf}.
Consider the pullback bundle 
\begin{equation} \label {bundle.tilda} p _{ \tilde{f}}: P _{ \tilde{f}}
\to Y
\end{equation}
by
$ \tilde{f}$ of the bundle $ \tilde{p}\co  U\to \freels$. In
other words 
\begin{equation*} P _{ \tilde{f}}= (Y \times M \times D ^{2}_0) \cup (Y \times M
\times D^2_\infty)/\sim
\end{equation*}
where $ (y, x, 1, \theta)_0$
 is equivalent to $ (y, {\tilde{f}}_{t,\theta} (x),1,\theta)_\infty$.
This
is a smooth bundle with the pullback of the $S^1$--action $ \tilde{\rho}$ on
${U}$ given by 
\begin{equation} \label {rho} \theta' \cdot (y,x,r,\theta)
_{0, \infty}= (\theta' \cdot y, x, r, \theta-\theta') _{0, \infty}.
\end{equation}
The quotient by the $S^1$ action on this bundle is the bundle $p _{f}\co P _{f}
\to B$. Thus, when $f\co  B \to Q$ is smooth the bundle $p _{f}\co P _{f} \to B$ and
the family $ \{f ^{*} (\Omega_b)\}$ of symplectic forms on this bundle are
smooth.
\end{remark}
\subsection {Natural embeddings into an \texorpdfstring{$\mathcal {F}$--}{F-}fibration} Now,
let $f\co  B \to Q$ be as usual, $ \tilde{f}\co Y \to \freels$  the associated $S
^{1}$--equivariant map (cf \eqref{eq.widetildaf}) and consider the
associated fibration $P _{ \tilde{f}}$ (cf \eqref{bundle.tilda}). There
are natural embeddings 
\begin{equation*} 
\tilde{I} _{0, \infty}: Y \times M
\to Y \times M \times D
^{2} _{0, \infty},
\end{equation*}
 given by including into the fiber over $0 \in D
^{2} _{0, \infty}$ 
and thus induced embeddings $$\tilde{I} _{0, \infty}\co  Y \times M \to P _{
\smash{\tilde{f}}}.$$ These
maps are $S^1$ equivariant under the action of $ \tilde{\rho}$ (cf \eqref{rho}) and hence there are  induced embeddings $I _{0, \infty}\co  B \times M \to P
_{f}$, which be used later. 

\subsubsection*{A special case} \label{special case} 
If we consider $Q$ as $\freels$ bundle over $ \mathbb{CP} ^{\infty}$, there is a
natural map $i_*\co  H_* (\freels) \to H ^{S^1}_*
(\freels)$ induced by inclusion of the fiber.
Given a cycle ${f}'\co  B \to \freels$, the bundle $P _{f}$ induced by 
the cycle $f=i \circ{f}'\co  B \to Q$ can be easily seen to be
isomorphic to the
pullback by $ \tilde{f}$ of the bundle ${U}$ over $\freels$, ie
\begin{equation*} P _{f} \simeq (B \times M \times D ^{2}_0) \cup  (B \times M \times D^2_\infty) /\sim
\end{equation*} 
where for $(b,x,\theta)_0$  in the boundary of $B \times M \times D ^{2}_0$,  $
(b,x,\theta)_0 \sim (b, {f}' _{b, \theta}(x), \theta)_\infty$, and the
embeddings \begin {equation} \label {eq.18}I_z\co  B \times M \to P_f,
\end {equation} defined above are now defined for all $z \in S^2$. (This
embedding is only well defined up to isotopy for $z$ in the equator $\partial
D^2 _{0, \infty} \in S^2$.) 

The following subsection essentially sets up for \fullref{section.lie.group} and its reading may be postponed until then. On the
other hand, it
may help to clarify the above constructions.  
\subsection {Example of an \texorpdfstring{$ \mathcal {F}$--}{F-}fibration} \label {Y.action} 
Suppose now we have a map $\hat{f}\co  Y \to \Ham(M, \omega),$
where $q\co  Y \to B$ is an oriented principal $S^1$ bundle. Suppose further that the map $
\hat{f}$ 
is $S^1$ equivariant with respect to the $S^1$ action on $Y$ and $S^1$ action
on $ \Ham(M, \omega)$ corresponding to the right action by a subgroup
$\gamma\co  S^1 \to \Ham(M, \omega)$ on $ \Ham(M, \omega)$.  Let us
understand the fibration $P _{f}$ for the induced map \begin{equation*} f\co  B \to Q. 
\end{equation*}
First, we can identify $X _{\gamma}$ with $S^3 \times _{S^1} M $, where
$S^1$ acts diagonally on $S ^{3} \times M$ by $$ e ^{2 \pi i \theta} \cdot (z_1,
z_2; x) = (e ^{-2 \pi i \theta} z_1, e ^{-2 \pi i \theta} z_2; \gamma (e
^{2 \pi i\theta})x),$$ using complex coordinates on $S^3$. To see
this, write $ [z_1, z_2; x]$ for the equivalence
class of the point $ (z_1/r, z_2/r; x) \in S ^{3} \times M$, where $r$ is the
norm of $(z_1, z_2)$. We identify
$ D_0 \times M$ with $ \{ [1,z;x]: |z| \leq 1, x \in M\}$ naturally and $D_\infty \times
M$ with $ \{[z, 1; x]: |z| \leq 1, x \in M\}$ via the orientation
reversing reflection. The gluing map is then
\begin{equation*} [1, e ^{2 \pi i \theta}; x] \sim [e ^{-2 \pi i \theta}, 1;
\gamma (e ^{2 \pi i \theta})x], \end{equation*} consistent with the
previous definition. There is an $S^1$ action $\beta$ on $X _{\gamma}$ given by
\begin{equation} \label {eq.action.beta} e^{2 \pi i\theta'} \cdot [z_1, z_2; x]
= [z_1, e ^{2 \pi i \theta'} z_2; x].
\end{equation}

\begin{lemma} \label {lemma.isomorphism} The bundle $p _{f}\co  P _{f} \to B$ is
isomorphic to the bundle $h\co  Y \times _{S^1} X _{\gamma} \to B$, where
$S^1$ is acting  by $\beta$ on $X _{\gamma}$. 
\end{lemma}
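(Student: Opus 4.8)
The plan is to unwind both sides of the claimed isomorphism into explicit quotient presentations and produce an $S^1$-equivariant diffeomorphism of the total spaces that covers $\id_B$ and respects the Hamiltonian bundle structure. Recall from \fullref{remark.reformulation} that $P_f$ is the $S^1$-quotient of $P_{\tilde f}$ by the action $\tilde\rho$ of \eqref{rho}, and that $P_{\tilde f} = (Y \times M \times D^2_0) \cup (Y\times M\times D^2_\infty)/\sim$ with the gluing $(y,x,1,\theta)_0 \sim (y, \tilde f_{y,\theta}(x),1,\theta)_\infty$. Because $\hat f\co Y\to \Ham(M,\omega)$ is $S^1$-equivariant for the right $\gamma$-action, the loop $\tilde f(y)$ attached at $y\in Y$ is, by construction of \eqref{eq.widetildaf}, the loop $\theta\mapsto \hat f(y\cdot\theta) = \hat f(y)\circ\gamma(e^{2\pi i\theta})$ up to the standard identifications; so along each $S^1$-orbit in $Y$ the clutching data is governed entirely by $\gamma$, with $\hat f(y)$ playing the role of an overall fiberwise symplectomorphism that is constant on the orbit.

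First I would write down the candidate map. On $Y\times M\times D^2_0$ send $(y,x,r,\theta)_0 \mapsto [1, re^{2\pi i\theta}; x]$ paired with $y$, and on $Y\times M\times D^2_\infty$ send $(y,x,r,\theta)_\infty \mapsto [re^{-2\pi i\theta},1; \hat f(y)(x)]$ paired with $y$ (the orientation-reversing reflection on $D^2_\infty$ being exactly the one used in the identification preceding the lemma). I would then check that this descends through $\sim$ on $P_{\tilde f}$: on the overlap $r=1$ the two formulas must agree after applying the gluing $[1,e^{2\pi i\theta};x']\sim[e^{-2\pi i\theta},1;\gamma(e^{2\pi i\theta})x']$ in $X_\gamma$, and this is where the equality $\tilde f_{y,\theta} = \hat f(y)\circ\gamma(e^{2\pi i\theta})\circ \hat f(y\cdot\theta)^{-1}$ (equivalently, the equivariance of $\hat f$) is used. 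Next I would verify that the map intertwines $\tilde\rho$ of \eqref{rho} with the diagonal $S^1$-action on $Y\times X_\gamma$ whose $X_\gamma$-component is $\beta$ of \eqref{eq.action.beta}: the $\theta'$-shift $(y,x,r,\theta)\mapsto(\theta'\cdot y, x, r,\theta-\theta')$ becomes, on the $D^2_0$ chart, $[1, re^{2\pi i(\theta-\theta')};x]$, i.e.\ multiplication of the second homogeneous coordinate by $e^{-2\pi i\theta'}$ after absorbing the $\theta'\cdot y$ move — a short bookkeeping check that the signs match $\beta$. Passing to $S^1$-quotients then yields a bundle isomorphism $P_f \cong Y\times_{S^1} X_\gamma$ over $B$, and since every map in sight is a Hamiltonian bundle map (it is fiberwise a symplectomorphism, being built from $\hat f(y)$ and the standard charts) the $\mathcal F$-fibration structure is preserved.

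The main obstacle is purely one of normalization: getting the orientation conventions on $D^2_\infty$, the direction of the reflection $z\mapsto \bar z$, the sign in the weight convention \eqref{eq.weights}, and the sense of the right versus left $\gamma$-action all mutually consistent so that the two gluing relations literally coincide rather than differ by an inversion $\theta\mapsto-\theta$ or a conjugation. I would handle this by fixing once and for all the identification $X_\gamma \cong S^3\times_{S^1}M$ exactly as stated before the lemma, tracking the induced maps on $\partial/\partial\theta$, $\partial/\partial x$, $\partial/\partial r$ just as was done for the family $\wwtilde\Omega_\gamma$ above, and choosing the degree-one identifications so the clutching functions agree on the nose; everything else (smoothness, covering $\id_B$, equivariance of the quotient) is then automatic from \fullref{remark.reformulation}.
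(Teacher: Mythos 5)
Your overall strategy is exactly the paper's: write $P_{\tilde f}$ in charts, define an explicit bundle map to $Y\times X_\gamma$ that is the identity on the $D^2_0$ chart and twists by $\hat f(y)^{\pm1}$ on the $D^2_\infty$ chart, check compatibility with the two gluing relations, verify $S^1$--equivariance against the action \eqref{eq.action.beta2}--\eqref{eq.action.beta3}, and pass to $S^1$ quotients. However, your candidate map fails the well-definedness check as written. With the gluing $(y,x,1,\theta)_0\sim(y,\tilde f_{y,\theta}(x),1,\theta)_\infty$ and $\tilde f_{y,\theta}=\hat f(y)\circ\gamma(e^{2\pi i\theta})$ (which you state correctly in your first paragraph), the point $(y,x,1,\theta)_0$ goes to $\bigl(y,[1,e^{2\pi i\theta};x]\bigr)$, while its $\sim$--partner $(y,\hat f(y)\circ\gamma_\theta(x),1,\theta)_\infty$ goes under your $\infty$--chart formula to $\bigl(y,[e^{-2\pi i\theta},1;\hat f(y)^{2}\circ\gamma_\theta(x)]\bigr)$. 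Comparing with the gluing $[1,e^{2\pi i\theta};x]\sim[e^{-2\pi i\theta},1;\gamma(e^{2\pi i\theta})x]$ in $X_\gamma$ forces $\hat f(y)^{2}=\id$, which is false in general. The correct formula applies $\hat f(y)^{-1}$, not $\hat f(y)$, on the $\infty$ chart --- precisely the map $k(y,x,r,\theta)_\infty=(y,\hat f(y)^{-1}(x),r,\theta)_\infty$, $k(y,x,r,\theta)_0=(y,x,r,\theta)_0$ used in the paper.

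Relatedly, the identity you invoke at the crucial step, $\tilde f_{y,\theta}=\hat f(y)\circ\gamma(e^{2\pi i\theta})\circ\hat f(y\cdot\theta)^{-1}$, is not the equivariance relation: substituting $\hat f(y\cdot\theta)=\hat f(y)\circ\gamma(e^{2\pi i\theta})$ makes the right-hand side collapse to $\id$. The relation actually needed (and which you already wrote correctly earlier) is simply $\tilde f_{y,\theta}=\hat f(y\cdot\theta)=\hat f(y)\circ\gamma(e^{2\pi i\theta})$. You rightly flagged the placement of inverses and orientations as the delicate point, but the proposal does not resolve it; once $\hat f(y)$ is replaced by $\hat f(y)^{-1}$ in the $\infty$ chart, your gluing and equivariance bookkeeping goes through and reproduces the paper's proof.
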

\begin{proof} 
Let $ \tilde{f}\co  Y \to \freels$
be as above (see \eqref{eq.widetildaf}) so \begin{gather*}\big(\tilde{f} (y)=
\hat{f}(y) \circ \gamma \big)\co  S^1 \to \Ham(M, \omega).\\ 
%
{P _{ \tilde{f}}}= Y  \times M \times D^2_0 \cup Y
\times M \times D^2_\infty / \sim 
\tag*{\hbox{We have}}\\
(y, x, 1, \theta)_0 \sim (y, \hat{f}(y) \circ
\gamma_\theta(x), 1, \theta)_\infty.
\tag*{\hbox{where}}
\end{gather*} 
%
In coordinates we have
\begin{equation*} 
M \times D ^{2}_0 \cup Y \times M \times D ^{2}_\infty /\sim
\end{equation*} where $ (y, x, 1, \theta)_0 \sim (y, \gamma_\theta (x), 1, 
\theta)_\infty$. There is a map $k\co  {P} _{ \tilde{f}} \to Y \times X
_{\gamma}$ defined as follows: 
\begin{align*} k(y,x, r, \theta)_\infty &= (y, \hat{f}(y) ^{-1} (x), r,
\theta)_\infty  \\
 k(y,x, r, \theta)_0 &= (y, x, r, \theta)_0. 
\end{align*}
This is a well defined bundle map, as is shown by the
following diagram: $$
\xymatrix{(y,x, \theta)_0   \ar [r]^-{\hbox{\footnotesize$\sim$}} \ar [d]^ -{k} & (y, \gamma
^{y}_\theta (x),\theta)_\infty \ar  [d]^- {k}\\
 (y,x, \theta)_0  \ar [r]^-{\hbox{\footnotesize$\sim$}} & (y, \gamma_ \theta(x)= \hat{f} (y) ^{-1}
 \circ \gamma ^{y}_{\theta}(x), \theta)_\infty}$$
We have the $S^1$ action on $Y
\times X _{\gamma}$ given by 
\begin {align} \label {eq.action.beta2}\theta' \cdot (y, x, r, \theta)_0 & = ( 
y \cdot \theta', x, r, \theta -\theta')_0\\
\label {eq.action.beta3}\theta' \cdot (y, x, r, \theta)_\infty & = (y \cdot
\theta', \gamma ^{-1} (\theta')x, r, \theta -\theta')_\infty. 
\end {align}
It is now not hard to check that the map $k$ is $S^1$ equivariant with respect
to the action $ \tilde{\rho}$ (cf \eqref{rho}) and the action given in
\eqref{eq.action.beta2}--\eqref{eq.action.beta3}.  Finally,
we conclude that \[ P _{f} \simeq Y \times _{S^1} X _{\gamma}.
\proved\]
\end {proof} 

\subsubsection {An admissible family of symplectic forms on \texorpdfstring{$P _{f}$}{P\137f}}
\label{admissible.family} Suppose $P _{f}$ is as in \fullref{lemma.isomorphism}. Using this lemma we can put an 
admissible family $ \{\Omega_{ {b}}\}$ on $p\co  P_f \to B$, $ b \in
B$ and
a compatible family of almost complex structures $\{J_ {b}\}$ as follows.
Let $\alpha$ be the standard contact form on the unit sphere $S^3$, normalized
so that $d\alpha=h^* \tau$, where $h\co  S^3 \to S^2$ is the Hopf map and  $\tau$
is a standard area form on $S^2$ with area 1. 
If $H\co  M \to \mathbb{R}$ denotes the normalized Hamiltonian generating $\gamma$, the
closed $2$--form \begin{equation*}  \omega- (\max H+ \epsilon) d\alpha + d (H\alpha)
\end{equation*} on $S^3 \times M$ descends to a form $ \tilde{\omega}$
on $S ^{3} \times _{S^1} M$, which is symplectic for an $\epsilon>0$. 
Let $J$ be any $S^1$--invariant almost complex structure on $M$ and $J_0$ the
standard $S^1$ invariant complex structure on
$ \mathbb{C} ^{2}$. Then $J \times J_0$ is also $S^1$--invariant, and its
restriction to $S^3$ preserves the contact planes $\ker \alpha$. It is not hard
to see that $J \times J_0$ descends to an almost complex structure $
\tilde{J}$ on the quotient $X _{\gamma}$ which coincides with $J$ on the fibers $M$.
By construction, if $J$ is compatible
with $\omega$, then $ \tilde{J}$ is compatible with $ \tilde{\omega}$.
The form $ \tilde{\omega}$ and the complex structure $ \tilde{J}$ are
invariant under the
$S^1$ action $\beta$ on $X _{\gamma}$ and therefore give rise to a family $
\{\Omega_ {b}\}$ and a compatible family $ \{J _{b}\}$ on $P_f = Y \times _{S^1}
X _{\gamma} $. 
\subsection {PGW invariants of an \texorpdfstring{$ \mathcal {F}$--}{F-}fibration} \label{Section2}
Let $p\co  P \to B$ be an $ \mathcal {F}$--fibration. 
\begin{definition} We call a family $\{J_b\}$ of fiberwise
$\{\Omega_b\}$--compatible complex  structures \emph{$\pi$--compatible}
if  $\pi\co  (X _{b}, J_b) \to (S ^{2}, j)$  is holomorphic for each $b$ and each
$J_b$ preserves the $\Omega_b$--orthogonal subspaces of $TX_b$, for some
admissible family $ \{\Omega _{b}\}$ on~$P$.
\end{definition}

Let $ \{J_b\}$ be a
$\pi$--compatible family of almost complex structures. Consider the following moduli space 
 $$ \mathcal M^*_ {0}(P, {A}, \{J_b\}) =\{ \text {pairs }(u,b)\},$$ where
\begin{itemize}
   \item $b \in B ^{k}$.
   \item $u$ is a $J_b$--holomorphic, simple  curve $u\co  (S^2, j) \to X_b
   \subset P$  representing class $A \in j_* (H_2 ^{\sect}(X))
   \subset H_2 (P)$, where $H_2 ^{\sect} (X)$ are section classes and $j_*$ is
   induced by inclusion of fiber. (The subspace  $j_* (H_2 ^{\sect}(X))$ is
   unambiguous, since the structure group $ \mathcal {F}$ preserves
   section classes of $X$.)
   \end{itemize}
An element of the above moduli space will be called loosely a \emph{fiber 
holomorphic curve}. 
For details of the following discussion see for example 
McDuff and Salamon~\cite[Sections 6.7, 8.4]{MS} or Buse~\cite{OB}.
Given an element $ (u, b)$ of the moduli space $\mathcal M^*_ {0}(P, {A},
\{J_b\})$ there is the associated real linear Cauchy--Riemann operator
\begin{equation*} 
D_{u, b}: \big\{\xi \in \Omega ^{0} 
(S^2, u
^{*}TP) \mid dp (\xi) \equiv \text {const} \big\} \to \Omega ^{0,1} (S^2, u
^{*} TX_b)
\end{equation*}  
of index $2n+k +2 c_1 (A)$, where $c_1$ is the vertical Chern class of the
fibration $p\co  P \to B$. 
A $\pi$--compatible family  $ \{J_b\}$ is called
\emph{regular} for $A$, if the operator $D _{u,b}$ is surjective for every
tuple $ (u,b)$, where $u \in \mathcal {M} ^{*}_0 (P, A, \{J_b\})$.
 The set of regular $\pi$--compatible families for $A$ will be
denoted by $ \mathcal {J} _{\reg}  (A)$ and the set of all families by $
\mathcal {J}$. From now on regular family $ \{J_b\}$ always refers to a
$\pi$--compatible regular family.
\begin{lemma}
\begin{enumerate} 
\item If $ \{J_b\} \in \mathcal {J} _{\reg} (A)$ then  $
\mathcal M^*(P, {A}; \{J_b\})$ is a smooth manifold of dimension 
\begin{equation*} \dim \mathcal {M}^*(P,{A}; \{J_b\}) = 2n +k + 2c_1 (A).
\end{equation*}
\item The set $ \mathcal {J} _{\reg} (A)$ is of the second category in $ \mathcal {J}$.
\end{enumerate}
\end{lemma}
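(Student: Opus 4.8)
The plan is to establish both statements by the standard transversality machinery for pseudoholomorphic curves, adapted to the fibered setting where the complex structure is only allowed to vary through $\pi$--compatible families. For part (1), once we know that $\{J_b\} \in \mathcal{J}_{\reg}(A)$, the operator $D_{u,b}$ is surjective at every pair $(u,b)$ in the moduli space, so the universal moduli space is cut out transversally and $\mathcal{M}^*(P,A;\{J_b\})$ is a smooth manifold whose dimension equals the index of $D_{u,b}$, which was recorded above as $2n + k + 2c_1(A)$. The only subtlety is that our operator acts on the constrained space $\{\xi \in \Omega^0(S^2, u^*TP) \mid dp(\xi) \equiv \mathrm{const}\}$ rather than on all sections of $u^*TP$; but this is precisely the linearization of the condition that $u$ lands in a single fiber $X_b$ with $b$ varying, so the implicit function theorem applies verbatim to give the manifold structure, and simpleness of $u$ guarantees (via the usual argument that the isotropy is trivial) that there are no orbifold points.

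For part (2), the plan is to run the Sard--Smale argument. First I would form the universal moduli space $\mathcal{M}^*(P,A;\mathcal{J})$ consisting of triples $(u,b,\{J_b\})$ with $\{J_b\}$ ranging over $\pi$--compatible families and $u$ a simple $J_b$--holomorphic section-class curve in $X_b$. The key step is to show this universal space is a smooth Banach manifold; this reduces to showing that the universal linearized operator, which is $D_{u,b}$ augmented by the derivative in the $J$--direction $\xi \mapsto \tfrac12 Y \circ du \circ j$ for $Y$ a tangent vector to the space of $\pi$--compatible families, is surjective. Surjectivity here uses the standard fact that for a simple curve the set of injective points is open and dense, so one can find variations $Y$ of the almost complex structure supported near an injective point of $u$; the constraint that $Y$ must preserve $\pi$--compatibility (i.e.\ $Y$ is vertical, anti-commutes with $J_b$, and respects the $\Omega_b$--orthogonal splitting) still leaves enough freedom because an injective point that also maps to a regular value of $\pi \circ u$ can be chosen, and there the perturbation only needs to be made in the fiber direction where $J_b = J$ is unconstrained. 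Then $\mathcal{J}_{\reg}(A)$ is the set of regular values of the projection $\mathcal{M}^*(P,A;\mathcal{J}) \to \mathcal{J}$, which is Fredholm, so the Sard--Smale theorem gives that $\mathcal{J}_{\reg}(A)$ is of second category, and a standard approximation argument passes from the Banach (e.g.\ $C^\ell$) completions back to the smooth category.

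The main obstacle I anticipate is the surjectivity of the universal linearization under the $\pi$--compatibility constraint: unlike the unconstrained case, we are not free to perturb $J_b$ arbitrarily near an injective point, since the perturbation must remain vertical and must not destroy the holomorphicity of $\pi$ or the orthogonal splitting. The resolution is to observe that the fiberwise part of a $\pi$--compatible family is genuinely unconstrained (it is just an $\Omega_b$--compatible complex structure on the vertical bundle), and that a simple fiber-holomorphic curve $u$ has an injective point $z_0$ at which $du(z_0)$ has a component transverse to the fiber is impossible since $u$ lands in one fiber --- so instead one uses that $du(z_0)$ is injective \emph{into the fiber} $T_{u(z_0)}X_b$, and perturbs the fiberwise complex structure near $u(z_0)$; this is exactly the situation handled in \cite[Sections 6.7, 8.4]{MS} and \cite{OB}, and the argument there carries over once one checks that such perturbations indeed move $D_{u,b}$ onto all of $\Omega^{0,1}(S^2, u^*TX_b)$. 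Everything else --- the Banach manifold charts, the Fredholm property, the Taubes-type argument to recover a single smooth family from the residual sets at each $C^\ell$ level --- is routine.
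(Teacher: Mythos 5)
Your overall strategy (implicit function theorem for part (1), universal moduli space plus Sard--Smale for part (2)) is the standard one, and in fact the paper offers no proof of this lemma at all---it simply points to McDuff--Salamon and Buse for the parametric transversality machinery. Part (1) and the formal skeleton of part (2) are fine, and somewhere-injectivity is even easier than you make it: a smooth simple curve in a section class is, after reparametrization, an honest section of $\pi\co X_b \to S^2$, so every point is an injective point.

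However, your resolution of the ``main obstacle'' has a genuine gap. You propose to achieve surjectivity of the universal linearization by perturbing only the fiberwise (vertical) almost complex structure near an injective point, using that $du(z_0)$ is ``injective into the fiber.'' Since $u$ is a section of $\pi\co X_b\to S^2$, the derivative $du$ always has a nonzero \emph{horizontal} component, but its \emph{vertical} component may vanish identically (a flat section). For such a curve a variation $Y$ of the vertical almost complex structure contributes $Y\circ (du)^{\mathrm{vert}}\circ j \equiv 0$ to the linearized operator, so these perturbations accomplish nothing, while $D_{u,b}$ itself can simultaneously fail to be surjective there. This is not hypothetical: the sections $\sigma_x$ over fixed points of the circle action in Section 5.1 of this paper are exactly flat sections with nontrivial cokernel (that is the whole point of the obstruction bundle $E$), so your universal moduli space would fail to be cut out transversally at precisely the curves the paper cares about. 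The correct mechanism, which is what the cited references use, is to also vary the horizontal distribution---equivalently the admissible family $\{\Omega_b\}$, which the definition of $\pi$--compatibility leaves free (``for some admissible family'')---that is, to allow perturbations $Y$ that send horizontal vectors to vertical ones while keeping $\pi$ holomorphic. For those, $Y\circ du\circ j$ is governed by the horizontal part of $du$, which is nowhere zero because $\pi\circ u$ is a degree-one holomorphic self-map of $S^2$ and hence a biholomorphism; with that substitution the Sard--Smale and Taubes arguments close as you describe.
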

Suppose now we have an oriented smooth cobordism $C$ between $B_1, B_2$. Let $P
_{C}$ be a symplectic fibration over $C$. We denote by $P _{i}$ the
restriction of $P _{C}$ over $B_i$. Suppose we have regular
families $\{J_b ^{i}\}$ on $P _{i}$.
Let $ \{J ^{C}_b\}$ be family on $P _{C}$ restricting to $\{J_b ^{i}\}$ on $P
_{i}$. We then have  the corresponding moduli space
\begin{equation*} \mathcal {M} ^{*} (P_C, A; \{J ^{C}_b\}).
\end{equation*} 
We again say that $ \{J ^{C}_b\}$ is \emph{regular} if  the associated
Cauchy--Riemann operator is surjective. The space of regular families $\smash{ \{J ^{C}_b\}}$
will be denoted by $ \smash{\mathcal {J} _{ \text {reg}} (A;  \{J_b ^{1}\}, \{J _{b} ^{2}\})}$,
and the space of all families by $ \smash{\mathcal {J} (A; \{J_b ^{1}\}, \{J _{b} ^{2}\})}$. 
\begin{lemma} \label{cobordism}
\begin{enumerate}
\item If $\{J ^{C}_b\}$ is regular $\mathcal {M} ^{*} (P_C, A; \{J ^{C} _{b}\})$
is a smooth oriented manifold with boundary 
\begin{equation*} \partial \mathcal {M} ^{*} (P_C, A; \{J_b ^{C}\}) = 
\mathcal {M} (P_2, A, \{J_b ^{1}\}) - \mathcal {M} ^{*} (P_1, A, \{J_b
^{2}\}).
\end{equation*}

\item The set $ \mathcal {J} _{ \text {reg}} (A;  \{J_b ^{1}\}, \{J _{b} ^{2}\})$ is of
the second category in  $ \mathcal {J} (A; \{J_b ^{1}\}, \{J _{b} ^{2}\})$.
\end{enumerate}
\end{lemma}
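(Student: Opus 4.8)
The plan is to treat this as a standard application of the parametric Gromov--Witten machinery over a manifold with boundary, following the references already cited (McDuff--Salamon \cite{MS}, Buse \cite{OB}). First I would set up the universal moduli space $\mathcal{M}^*(P_C, A; \mathcal{J})$ over the whole second-category space of $\pi$--compatible families $\{J^C_b\}$ that restrict to the given regular families on the two ends. The key point is that the linearization of the defining section is surjective at every solution: this uses that $(u,b)$ represents a \emph{simple} curve in a fiber, together with the fact that the allowed variations of $J^C_b$ over the interior of $C$ are unconstrained, so one can achieve transversality by perturbing $J$ away from the boundary strata only. Then the implicit function theorem (in the Banach-manifold form, with appropriate Sobolev completions) shows the universal moduli space is a smooth Banach manifold, and the projection to the space of families is Fredholm of index $2n + k + 2c_1(A)$ relative to the parameter; here $k = \dim B_i$ and $\dim C = k+1$, so the expected dimension of $\mathcal{M}^*(P_C, A; \{J^C_b\})$ is $2n + (k+1) + 2c_1(A)$, one more than on each boundary piece, as it must be.

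The Sard--Smale theorem then gives that the regular families form a set of the second category, which is part (2). For part (1), once $\{J^C_b\}$ is regular the moduli space is a smooth manifold; the only subtlety is the boundary. Since $C$ is a cobordism with $\partial C = B_2 - B_1$, the fiberwise moduli space over a point $b$ near $\partial C$ degenerates only in that $b$ itself hits the boundary (there is no bubbling to worry about for the purposes of the \emph{smooth manifold with boundary} statement because we are not yet compactifying — that enters later when these invariants are actually evaluated), so $\partial \mathcal{M}^*(P_C, A; \{J^C_b\}) = \mathcal{M}(P_2, A, \{J^2_b\}) - \mathcal{M}^*(P_1, A, \{J^1_b\})$ with the induced orientations and the sign coming from the outward-normal-first convention on $C$. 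The orientation itself is obtained, as usual, from a choice of orientation of the determinant line bundle of the family of Cauchy--Riemann operators, which is canonically trivializable here because the index bundle deformation-retracts onto the complex-linear part; this is exactly the discussion in \cite[Sections 6.7, 8.4]{MS}, so I would simply invoke it.

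The main obstacle, and the only place genuine care is needed, is the transversality argument in the presence of the constraint defining a $\pi$--compatible family: the variations $\dot J$ are not arbitrary sections of the space of almost complex structures on $P_C$ but are required to keep $\pi$ holomorphic and to preserve the $\Omega_b$--orthogonal splitting. One must check that this restricted class of perturbations is still large enough to surject onto the cokernel of $D_{u,b}$ at a simple curve. The standard trick is that a simple fiber curve $u$ has an injective point $z_0$ at which $du(z_0)\neq 0$ and $u(z_0)$ avoids the other branches; near $u(z_0)$ one can freely perturb $J_b$ \emph{within the fiber direction} (which is unconstrained by the $\pi$--compatibility conditions, as those only pin down the behavior in the horizontal/base directions), and this suffices to kill any nonzero element of the cokernel by the usual unique-continuation-plus-bump-function argument. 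I would spell this out by reference to the analogous statement for fiberwise curves in Hamiltonian fibrations in \cite{MS} and \cite{OB}, since the argument is identical once one observes the fiber-direction perturbations are available.
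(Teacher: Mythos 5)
The paper offers no proof of this lemma at all, deferring to McDuff--Salamon \cite[Sections 6.7, 8.4]{MS} and Buse \cite{OB}; your sketch is precisely the standard parametric transversality argument those references supply, including the one point that genuinely needs care here, namely that fiber-direction perturbations of a $\pi$--compatible family suffice to achieve surjectivity of the linearization at a simple section-class curve. Your proposal is correct and consistent with the paper's intended approach.
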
 

$\hbox{Let}\hfill  \mathcal M^*_ {0,l}(P, {A}, \{J_b\})= \big\{ \text {equivalence classes
of tuples } (u,z_1 \ldots z_l) 
\big\}, \hfill\hphantom{Let}$ 

where $u \in \mathcal {M} ^{*} (P, A; \{J_b\})$  and $z_1, \ldots,
z_l$ are pairwise distinct points in $S^2$. The equivalence relation is 
$ (u, z_1, .., z_l) \sim (u', z _{1}', \ldots , z _{l}')$ if there exists $\phi
\in PSL (2, \mathbb{C})$ s.t.\ $u' \circ \phi=u$ and $\phi (z_i)= z_i'$.

 For a
regular family $ \{J_b\}$, this is a manifold of dimension 
\begin{equation} \label {dimension} 2n+k+2c_1 (A) +2l -6, 
\end{equation}
where $k$ is the dimension of the base $B$.
Consider the evaluation map $$\ev\co \mathcal M^*_ {0,l}(P, {A}; \{J_b\})
\to P^l.$$ Similarly, we have evaluation maps $\ev ^{C}\co \mathcal {M} ^{*} (P_C,
A; \{J ^{C} _{b}\}) \to P _{C}$.
For these maps to represent 
pseudocycles we need some conditions on $M$. 
\begin{proposition} \label {proposition.pseudocycle} Let $ (M, \omega)$ be
spherically monotone. Then the maps $\ev$ and $\ev ^{C}$ above are pseudocycles
for generic regular $\pi$--compatible families $ \{J_b\}$ and $ \{J_b ^{C}\}$.
\end{proposition}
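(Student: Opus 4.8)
The plan is to verify the definition of a pseudocycle directly (cf.\ \cite[Section 6.5]{MS}): the domain must be a smooth oriented manifold of well-defined dimension, the image must have compact closure, and the limit set $\Omega_{\ev} := \bigcap_{K} \overline{\ev\big(\mathcal{M}^*_{0,l}(P,A;\{J_b\}) \setminus K\big)}$ (intersection over compact $K$) must lie in the image of a smooth map from a manifold of dimension at most $\dim \mathcal{M}^*_{0,l}(P,A;\{J_b\}) - 2$. The first requirement follows from the lemmas and the dimension formula \eqref{dimension} recorded above. The second is immediate because $M$ and $B$ are compact, hence $P$ is compact and $\ev$ takes values in the compact set $P^l$ (and likewise $\ev^C$ in $P_C^l$ with $C$ compact). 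So the entire content is the codimension estimate for the limit set.

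First I would identify the limit configurations by Gromov compactness. If $(u_n,b_n) \in \mathcal{M}^*_{0,l}(P,A;\{J_b\})$ has no convergent subsequence, pass to $b_n \to b$ (compactness of $B$); Gromov compactness for $J_b$--holomorphic spheres in the compact symplectic manifold $(X_b,\Omega_b)$ then yields a stable $J_b$--holomorphic limit with connected nodal domain. Since $\{J_b\}$ is $\pi$--compatible, $\pi$ is $(J_b,j)$--holomorphic, so composing the limit with $\pi$ gives a holomorphic map from the nodal domain to $S^2$ of total degree $1$, because $A$ is a section class. Hence exactly one domain component $v_0$ maps biholomorphically onto $S^2$ and represents a section class $A' \in H_2^{\sect}(X)$, while every other (bubble) component lies in a single fiber $M \subset X_b$ and represents a spherical class $B_i \in H_2^{S}(M)$ with $\omega(B_i)>0$, subject to $A = A' + \sum_i B_i$. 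Colliding marked points may in addition produce ghost bubbles; those strata are lower-dimensional and are treated the same way.

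Next comes the dimension count, and this is exactly where spherical monotonicity enters. Because $c_1 = \lambda [\omega]$ on spherical classes with $\lambda>0$, every nonconstant vertical bubble has $c_1(B_i) \geq 1$, there are no nonconstant $J_b$--holomorphic spheres of Chern number $\leq 0$, and any multiply covered bubble appearing in a limit factors through an underlying simple sphere of strictly smaller energy. For a generic regular $\pi$--compatible family one can therefore arrange transversality simultaneously for the moduli spaces of all the simple configurations occurring in limits; the necessary perturbation freedom is available because a $\pi$--compatible family may be chosen arbitrarily in the fiber directions, which is precisely where the bubbles live. The standard semipositive (monotone) argument, cf.\ \cite[Chapters 6 and 8]{MS}, then shows that the stratum of stable maps carrying at least one nonconstant bubble, and likewise the strata with ghost components, has dimension at most $\dim \mathcal{M}^*_{0,l}(P,A;\{J_b\}) - 2$: the section component forfeits at least $2 c_1\big(\sum_i B_i\big) \geq 2$ from its index, which outweighs the extra parameters carried by the bubble components and the nodes. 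Consequently $\Omega_{\ev}$ is covered by the image of a smooth map from a manifold of codimension at least $2$, so $\ev$ is a pseudocycle. The argument for $\ev^C$ is identical with $P_C$, $C$ in place of $P$, $B$: by \fullref{cobordism} the moduli space is a smooth oriented manifold with boundary, Gromov compactness and the same monotone estimate apply verbatim, and $\ev^C$ is a pseudocycle with boundary.

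The main obstacle I anticipate is the transversality bookkeeping for the boundary strata: one must ensure that a generic $\pi$--compatible family $\{J_b\}$ cuts out transversally every stratum of stable maps with bubbling, including those with multiply covered vertical spheres, without leaving the restricted class of $\pi$--compatible structures. This is handled as in \cite{MS}: simplicity of the section component together with $c_1(B_i) \geq 1$ forces those strata to have codimension at least two regardless of the covering multiplicities, and the genericity is obtained by perturbing $J_b$ fiberwise, which is legitimate since the fibers $M$ already satisfy the monotonicity hypothesis. A minor additional point, orientability and a coherent orientation of the covering manifold of $\Omega_{\ev}$, is dealt with by the usual determinant-line construction for the Cauchy--Riemann operators.
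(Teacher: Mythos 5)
Your proposal is correct and follows essentially the same route as the paper's (much terser) proof: Gromov compactness reduces limit configurations to a principal section component plus bubbles confined to the fibers $M$, monotonicity forces every bubble to have positive Chern number, and the standard McDuff--Salamon codimension-two argument for the bubbling strata yields the pseudocycle property. You have simply filled in the details that the paper delegates to \cite[Lemma 2.9]{Dusa} and \cite[Chapter 6]{MS}.
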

\begin{proof} 
Since we only consider curves which lie in the fibers of $p\co  P \to B$, any
bubbles must lie in the fiber. Next note that a stable map
into $(X _{b}, J _{b})$, representing a section class of $\pi\co  X_b \to S^2$ must
consist of a principal part which is a section, together with ``bubbles''
which lie in the fibers $M$ of $\pi\co  X _{b} \to S ^{2}$; see 
McDuff~\cite[Lemma 2.9]{Dusa}. By assumption that $M$ is monotone, these bubbles must have
positive Chern number. Using this, one can show that for a generic $\pi$--compatible family the evaluation map is a pseudocycle by standard arguments in McDuff and Salamon~\cite[Chapter 6]{MS}.
\end{proof}
\subsection {Definition of PGW invariants}
Under the assumptions of \fullref{proposition.pseudocycle}, we define
parametric Gromov--Witten invariants by
 \begin{equation*} \PGW ^{P}_{0,l} (a_1, \ldots, a_l; A)= [\ev] \cdot
(a_1 \times \ldots \times a_l),
\end{equation*} where $\cdot$ denotes intersection pairing in $P ^{l}$ and
$a_1, \ldots, a_l \in H_* (P)$.
\subsection {Quantum homology} 
The flavor of quantum homology we use is the following.
Let $\Lambda: = \Lambda ^{ \textrm {univ}} [q, q ^{-1}]$  be the ring of Laurent
polynomials in a variable $q$ of degree 2 with coefficients in the universal
Novikov ring.  Thus, its elements are polynomials in $q$ of the form 
\begin {equation} \label {eq.lambda} \sum_{\epsilon \in \mathbb{R}, \quad l \in \mathbb{Z}}
\lambda _{\epsilon, l} \cdot q ^{l} t ^{\epsilon} \qquad \#
\{\lambda _{\epsilon, l} \neq 0|  \epsilon \geq c \} <  \infty \textrm { for
all } c \in \mathbb{R},
\end {equation}
where $ \lambda _{\epsilon, l} \in \mathbb{Q}$. Set $$  {QH}_* (M)=
 {QH}_* (M ; \Lambda)=
 {H}_* (M) \otimes _{ \mathbb{Z}} \Lambda.$$
We define a valuation $\nu\co  QH_* (M) \to \mathbb{R}$ as follows:
\begin{equation*} \nu \Big(\sum_{A} b_A \cdot  q ^{l_A} t ^{\epsilon_A}\Big):= \sup _{
b_A \neq 0}  \epsilon_A,
\end{equation*}
where $A$ is an abstract index. 

Recall that the quantum intersection product on $QH_* (M)$ is
defined as follows. 
For $a, b \in H_* (M)$ $$a \ast _{M}b= \sum _{A \in H_2 ^{S} (M)} (a \ast _{M}b)_A
\otimes q ^{-c_1 (A)} t ^{-\omega (A)},$$ where 
$ (a \ast_M b)_A \in H _{i+j-2n+2c_1 (A)} (M)$ is defined by the duality
\begin{equation*} (a \ast_M b)_A \cdot c = GW _{0,3} ^{M} (a,b,c; A), \qquad
\text {for all $c\in H_* (M)$}.
\end{equation*}
The product is then extended by linearity to all of $QH_* (M)$. This product can
be shown to be associative (see McDuff and Salamon~\cite [Chapter 11, Section 1]{MS} for details)
and gives $QH_* (M)$ the structure of a graded commutative ring with unit $[M]$.
  
\section {Definition of QC classes} 
\label{quantum.char.classes} Let $X$
 as before be a Hamiltonian fibration:  $\pi\co X \to S^2$ with monotone fiber $M$,
and $p\co  P \to B$ be a smooth $ \mathcal {F}$--fibration with fiber $X$,
classified by a map into $\freels$, cf \fullref{remark.structure.group}. 
The following is a important ingredient in the definition of QC classes and
plays the role of the 2 dimensional cohomology class of the curvature form in
Chern--Weyl theory.
Let $M _{ \Ham}$ denote the universal $M$ bundle over $\BHam(M, \omega)$. There is a unique class $ [\Omega]\in H ^2 (M _{ \Ham})$ called the coupling class such that $$i^* ( [\Omega])= [\omega],
\hspace {15pt} \int_M {[\Omega]}^ {n+1}=0 \in H^2 (\BHam(M, \omega))$$  where $i\co  M \to M _{
\Ham} $ is the inclusion of fiber, and the integral above denotes the
integration along the fiber   (see K{\c{e}}dra and McDuff~\cite[Section3]{kedra-2005-9}). 
Note from \eqref{eq8} that the total space $P$ of the bundle $p\co  P \to B$ has
another submersive projection to $B \times S^2$ and the resulting bundle $M
\hookrightarrow P \to B \times S^2$ is clearly Hamiltonian, ie the transition
maps are fiberwise Hamiltonian symplectomorphisms.
\begin{definition}
We denote by $\mathcal {C} \in H ^{2} (P)$, the pullback
of the class $ [\Omega]$ above, by the classifying map of the natural
Hamiltonian fibration $$M \hookrightarrow P \to B \times S^2.$$
\end{definition}

Set $QH_* ^{B} (M)= H_* (B \times M) \otimes \Lambda.$
\begin{definition}  We define the \emph{total quantum characteristic
class}  of $p\co  P \to B$ by \begin{equation*} 
c ^{q} (P)= \sum_ {A \in {j_*(H_2^{\sect}  (X ))}} b _{{A}} \otimes q^ {-c_{ \textrm {vert}} ( {A})} t ^{
-\mathcal {C} ( {A})}  \in QH_* ^{B} (M). \end{equation*} In this formula,\begin{itemize}
   \item  $H ^{\sect}_2 (X)$ denotes the section homology classes of $\pi\co  X \to
   S^2$ as in \mbox{\fullref{Section2}}.     
    \item The map $j_*\co  H ^{\sect}_2(X) \to H_2(P_f)$ is
 induced by inclusion of fiber. 
  \item The coefficient $b_A \in H_* (B \times
M)$ is the transverse intersection of $$\ev: {\mathcal M}_ {0,1}(P, A; \{J_b\}) 
\to P$$ with $I _{0} (B \times M)$ (see \eqref{eq.18}).  More formally, 
$b _{A}$ is defined by  duality $$b _{A} \cdot _{B \times M} c = [\ev] \cdot
_P {I_0}_* (c),$$ for $c \in H_* (B \times M)$.
 \end{itemize}
\end{definition}
The above definition works essentially without change for an $ \mathcal
{F}$--fibration classified by a map into $Q$.
 \begin{remark}  To deduce that the condition
\eqref{eq.lambda} on the coefficients is satisfied we need to show that  there are
only finitely many homology classes $A \in H _{2}(P)$ which have
 representatives with area less then $c$ for every $c>0$ (for a fixed
 Riemannian metric on the compact manifold $P$). For then in
 particular
 there are only finitely many homology classes represented by vertical $ \{J_b\}$--holomorphic curves with $$E(A)=\Omega _{b} (A)
 =\mathcal {C} (A)+ \pi ^{*}
 (\alpha_b)  (A) \leq c$$ for every $c>0$, which would imply the finiteness
 condition. To prove the former intuitive statement, one can use geometric
measure theory and compactness theorem for spaces of integral currents with
uniformly bounded mass norm (cf Federer~\cite{GMT}).
\end{remark}
 \begin{notation} Let
us from now on shorten notation by setting $$q^
{-c_{ \textrm {vert}} ( {A})} t ^{ -\mathcal {C} ( {A})} \equiv e ^{A},$$
where it presents no confusion. 
\end{notation}
For a regular family $\{J_b\}$, $ \mathcal M^*_ {0,1}(P, {A}, \{J_b\})$
is a smooth manifold of dimension  $$(2n+2) + m + 2c_{1}(A) -4 = (2n +2) +m +2c
_{ \text {vert}} (A),$$ where $m=\dim B$ (cf \eqref{dimension}). It
follows that 
\begin {equation} \label {degree} \deg b_A= 2n+m +2c _{\text{vert}}
(A).
\end {equation} 
In particular, a class $A$ contributes only if $2 c _{\mathrm{vert}} (A) \leq 0$.

Every element $e = \sum_ {{j_*(A)}} b _{{A}} \otimes e ^{A} \in QH_* ^{B} (M)$
defines a linear functional on
$H _{*} (B)$ (where $H _{k} (B)= H _{k} (B, \mathbb{Z})/ \text { Tor}$) with
values in $QH_* (M)$ defined as follows. 
If $a \in H_k (B)$, then  $e (a) \in
QH_* (M)$ is given by
\begin {equation} \label {eq.functional}e (a)  =  \sum_ {A} \sum _{i} \left(b _{{A}}
\cdot  (a \otimes e _{i} ^{*}) \right) e_i \;  \otimes e ^{A},
\end {equation}
where $\{e_i\}$ is a basis for $H_* (M)$, $\{e_i ^{*}\}$ a dual basis for $H
_{*} (M)$ with respect to the intersection pairing and $\cdot$ is the
intersection pairing on $H_* (B \times M)$.  
\begin{remark}  Let us check  the degree of $e (a)$. We have that $
b _{{A}} \cdot  (a \otimes e _{i} ^{*})$ is nonzero when $$2n+m+2c _{\vert} (A) + \deg
a+2n- \deg e_i= 2n + m$$ so we get $\deg e _{i}=2n+\deg a+ 2c _{\vert} (A)$
and 
\begin {equation} \label {degree2} \deg e (a)= \deg e_i - 2 c _{\vert}
(A)= 2n+\deg a.
\end {equation} 
\end{remark} 
\begin{definition}  We  define the \emph{$k$--th quantum
characteristic class} $$c
^{q} _{k} (P) \in \text {Hom} (H_k (B), QH _{*} (M)) =H ^{k} (B,
QH_* (M)),$$ to be the restriction of the functional $c ^{q} (P)$ to $H _{k}
(B)$. 
\end{definition}
  In these terms, the functional $c ^{q} (P)$ is just the sum
$$c ^{q} (P)= c ^{q} _{0} (P) + c ^{q} _{1} (P) + \ldots +c ^{q} _{m} (P),$$
where $m$ is the dimension of $B$. When 
$\gamma$ is contractible, \fullref{Axiom 1} implies that $c
^{q}_0 (P) (\pt)= [M]$ is the multiplicative identity in the quantum
homology ring.
The analogous expression for Chern classes is called the total Chern class.
Interestingly, in our ``quantum'' setting the total class has a nice geometric
interpretation and this plays  a role in proving   the
corresponding ``Whitney sum formula'' in \fullref{section5}. 
\begin{example}  A loop $\gamma\co  S ^{1} \to \Ham(M, \omega)$ can be
viewed as a map $f_\gamma\co  \pt \to \freels$. The corresponding fibration $P
_{f_\gamma}$ over $\pt$ has fiber $X _{\gamma}$ and  $$c ^{q} (P _{f _{\gamma}})=
\sum_ {{j_*(A)}} b _{{A}} \otimes q^ {-c_{ \textrm {vert}}  ( {A})} t ^{
-\tilde{\omega} ( {A})} \in QH _{2n} (M),$$ since $H_* (\pt \times M)
\simeq H_* (M)$ and  \eqref{degree2} implies that the degree of the element $c ^{q} (P _{f _{\gamma}})$ is $2n$. 
In these terms, the Seidel element corresponding to $\gamma$ is defined to be 
\begin {equation*} S
(\gamma) = c ^{q} (P _{f_\gamma}). 
\end {equation*}
This element depends only on the homotopy class of $\gamma$, and Seidel
\cite{Seidel} proved 
that this defines a homomorphism $S\co  \pi_1( \Ham(M, \omega)) \to QH _{2n}
(M)$.
\end{example}
Recall from the introduction that for a smooth $k$--cycle $f\co  B \to Q$, $$\Psi
(B, f) \equiv c ^{q} _{k} (P _{f}) ( [B]).$$ 

\begin{lemma} \label{independence} The characteristic classes $c ^{q}_k (P)$ of
$p\co  P \to B$ are
independent of the choice of the admissible family $ \{\Omega_b\}$, and moreover
$\Psi (B_1,f_1) = \Psi (B_2, f_2)$ if $f_1\co  B_1 \to Q$ 
 is oriented cobordant to $f_2\co  B_2 \to Q$, in particular $\Psi$ is well defined
 map on $H_* (\ls, \mathbb{Q})$.
\end{lemma}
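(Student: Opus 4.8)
The plan is to derive both assertions from a single principle: a cobordism between moduli spaces of fiber holomorphic curves induces a pseudocycle bordism of the associated evaluation maps, hence equality of the intersection numbers defining the coefficients $b_A$. For independence of the admissible family, fix $p\co P\to B$ and two admissible families $\{\Omega^0_b\},\{\Omega^1_b\}$. By \fullref{structures} the space of admissible families on $P$ is connected, so there is a path $\{\Omega^t_b\}$ joining them; view it as an admissible family for the $\mathcal F$--fibration $P\times[0,1]\to B\times[0,1]$, and choose generically a $\pi$--compatible path $\{J^t_b\}$ regular for $A$ and restricting to the given regular families at $t=0,1$ (possible since regular families are of the second category, cf \fullref{cobordism}). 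Then $\bigcup_t\mathcal M^*_{0,1}(P,A,\{J^t_b\})$ is an oriented manifold with boundary the difference of the two moduli spaces at $t=0,1$ (\fullref{cobordism}), and since $M$ is spherically monotone its evaluation map is a pseudocycle with boundary, exactly as in \fullref{proposition.pseudocycle}. Crucially, the class $\mathcal C\in H^2(P)$ is a pullback of the coupling class, hence topological, and the embeddings $I_0$ and the vertical Chern class are independent of $\{\Omega_b\}$; so only the coefficients $b_A$, defined by $b_A\cdot_{B\times M}c=[\ev]\cdot_P (I_0)_*(c)$, could depend on the choice, and the pseudocycle bordism shows they do not. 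Hence each $c^q_k(P)$ is independent of $\{\Omega_b\}$ and of the regular $\{J_b\}$.

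For the cobordism invariance of $\Psi$, let $F\co C\to Q$ be an oriented cobordism from $f_1\co B_1\to Q$ to $f_2\co B_2\to Q$ and set $P_C=F^*(U^{S^1})$, which restricts to $P_{f_1}$ and $P_{f_2}$ over the two ends. Using weak contractibility of the fiber $\mathcal A$ (\fullref{structures}) and obstruction theory, extend chosen admissible families on the ends to $P_C$, then pick a $\pi$--compatible regular family $\{J^C_b\}$ restricting to fixed regular families over $B_1,B_2$. By \fullref{cobordism}, $\mathcal M^*_{0,1}(P_C,A,\{J^C_b\})$ is an oriented manifold whose boundary is the signed disjoint union of the moduli spaces over $B_1$ and $B_2$, and by monotonicity $\ev^C$ is a pseudocycle with boundary. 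The embedding $I_0^C\co C\times M\to P_C$ restricts over $\partial C$ to $I_0^{B_1}\sqcup I_0^{B_2}$, and the Hamiltonian fibration $M\hookrightarrow P_C\to C\times S^2$ restricts over $B_i\times S^2$ to the one defining $\mathcal C$ for $P_{f_i}$, so for a section class $A$ the coefficient $e^A=q^{-c_{\mathrm{vert}}(A)}t^{-\mathcal C(A)}$ is the same whether computed in $P_C$ or in $P_{f_i}$. A dimension count shows that, for the indices $(A,e_i)$ for which $b_A\cdot([B_j]\otimes e^*_i)$ can be nonzero (namely $\deg e_i=2n+\dim B_j+2c_{\mathrm{vert}}(A)$), intersecting $\ev^C$ transversally with $I_0^C(C\times Z_i)$, where $Z_i$ represents $e_i^*$, yields a compact oriented $1$--manifold — compact because monotonicity forces the Gromov-compactification strata to have codimension at least two — whose boundary is the difference of the corresponding $0$--dimensional transverse intersection counts over $B_1$ and $B_2$; hence those counts agree. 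Therefore $\sum_i(b_A\cdot([B_1]\otimes e^*_i))e_i=\sum_i(b_A\cdot([B_2]\otimes e^*_i))e_i$ in $H_*(M)$ for every section class $A$, and summing against the matching $e^A$ gives $\Psi(B_1,f_1)=\Psi(B_2,f_2)$ in $QH_{2n+*}(M)$.

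Finally, cobordism invariance shows $\Psi$ descends to the oriented bordism group $\Bord_*(\ls)$, via $\ls\hookrightarrow\freels$. Rationally $\Bord_*(\ls)\otimes\mathbb Q\cong H_*(\ls;\mathbb Q)\otimes_{\mathbb Q}(\Bord_*(\pt)\otimes\mathbb Q)$ with the map to $H_*(\ls;\mathbb Q)$ being the augmentation in the bordism factor, so to obtain a well defined map on $H_*(\ls;\mathbb Q)$ it suffices to check $\Psi(B\times N,f\circ\pr)=0$ whenever $N$ is a closed oriented manifold of positive dimension and $\pr\co B\times N\to B$. For such a pullback the moduli space is $\mathcal M^*_{0,1}(P_f,A,\{J_b\})\times N$ with evaluation map $\ev\times\id_N$ and $I_0=I_0^{P_f}\times\id_N$, so the defining intersection number in $P_f\times N$ factors through $[N]\cdot_N[N]=0$. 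The main obstacle throughout is the analytic input isolated above: verifying that the evaluation maps on the parametrized and cobordism moduli spaces are genuine pseudocycles with boundary — that is, that the Gromov-compactification strata have codimension at least two uniformly in the parameter, which is exactly where spherical monotonicity is used — together with a consistent choice of orientations so that the boundary identities carry the correct signs. The remaining bookkeeping, namely the degree count isolating the relevant $(A,e_i)$ and the matching of the coefficients $e^A$, is routine.
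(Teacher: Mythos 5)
Your argument is correct, and for the first two assertions it is essentially the paper's proof: independence of $\{\Omega_b\}$ via the connectedness statement of \fullref{structures} applied to a path of admissible families over $B\times I$, and bordism invariance of $\Psi$ by running the same intersection-number count through the parametrized moduli space over the cobordism $C$, using \fullref{cobordism} and \fullref{proposition.pseudocycle} to get a $1$--dimensional oriented cobordism between the $0$--dimensional transverse intersections over $B_1$ and $B_2$. (Your observation that the coupling class $\mathcal C$ and hence $e^A$ restricts consistently from $P_C$ to $P_{f_i}$ is a detail the paper leaves implicit but is needed and true.) Where you genuinely diverge is the final step, descending from bordism to $H_*(\ls,\mathbb Q)$. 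The paper invokes the Milnor--Moore/Cartan--Serre theorem (\fullref{thm.milnor.moore}): the rational Pontryagin ring is the universal enveloping algebra of its spherical classes, so generators are represented by products of spheres and relations by explicit (cylindrical) cobordisms, and no further vanishing statement is required. You instead use the rational degeneration of the Atiyah--Hirzebruch spectral sequence, $\Bord_*(\ls)\otimes\mathbb Q\cong H_*(\ls;\mathbb Q)\otimes(\Bord_*(\pt)\otimes\mathbb Q)$, and then must additionally check that $\Psi$ kills the classes $[B\times N, f\circ\pr]$ with $\dim N>0$; your product computation (moduli space splits as $\mathcal M\times N$, both cycles carry a full $[N]$ factor, so the degree-zero part of the intersection vanishes) does this correctly and mirrors the paper's own argument for triviality in the verification of \fullref{Axiom 1}. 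Your route is more general --- it works for any target space and does not use the $H$--space structure of $\ls$ --- at the cost of the extra vanishing lemma; the paper's route is shorter but leans on the specific Hopf-algebra structure of the loop space. Both are valid, and the substantive analytic content (regularity, pseudocycle compactness from monotonicity, orientations) is identified the same way in each.
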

\begin{proof} 
 To prove that $c ^{q} _{k} (P)$
are independent of the choice of the admissible family $ \{\Omega_b\}$  note
that by \fullref{structures} any two such families are smoothly homotopy
equivalent. The homotopy $ \{\Omega_b ^{t}\}$ gives an admissible family of
forms on $p\co  P \times I \to I$ at which point we may apply \fullref{cobordism} and \fullref{proposition.pseudocycle}. 
  
To prove the second statement 
consider a smooth oriented cobordism $F\co  C \to Q$ between  
$ (B_1,f_1)$ and $ (B_2, f_2)$. The proof is just a simple
consequence of \fullref{cobordism}. 
The construction
in \fullref{setup} yields an $ \mathcal {F}$--fibration $P_F$ over $C$
restricting
to the $ \mathcal {F}$--fibrations $P_i$ over $B_{i}$. Moreover, for $c \in H_*
(M)$ the class $I_0 ([ C] \otimes c)$ in $P _{F}$ restricts to the corresponding
classes $I_0 ([B_{i}] \otimes c)$ in $P_i$, cf \eqref{eq.18}. 
Let $$\Psi (B _{i}, f_i)= \sum _{A} b ^{i}_A  \otimes e ^{A}$$ be the
corresponding elements in $QH_* (M)$. 
We need to show that $b_A ^{1}=b ^{2}_A$.  
Consider the intersection numbers
 \begin{align*} b ^{i}_A \cdot _{M} c \equiv [\ev ^{i}_A] \cdot _{P} I 
 _0 ([B_i] \otimes c),
\end{align*}
where $\ev_A ^{i}$ are the evaluation maps 
\begin{align*} \ev_A ^{i}\co  \mathcal {M} _{0,1} (P _{i}, A, \{J ^{i}_b\}) \to
P _{i}
\end{align*}
for regular families $ \{J ^{i}_b\}$. 
Let $$\ev ^{F}_{A}\co  \mathcal {M} _{0,1} (P _{F}, A, \{J ^{C}_b\}) \to
P _{F} $$ be the evaluation map with $ \{J ^{C}_b\}$ a regular family
restricting to $ \{J ^{i}_b\}$ on $P _{i}$. When  the
dimension of $c$ is such that the intersection numbers above are nonzero, 
\fullref{cobordism}, and \fullref{proposition.pseudocycle} imply
that $\ev ^{F}_A \cap I  ([C] \otimes c)$ is a one-dimensional cobordism between the oriented $0$--dimensional manifolds 
$\ev ^{1}_A \cap I _0 (B_1 {\otimes} c)$, $\ev ^{2}_A \cap I _0
( B_2 {\otimes} c)$, assuming things are perturbed to be transverse. Thus, the
intersection numbers $\ev ^{1}_A \cdot I _0 (B_1 \otimes c)$, $ \ev ^{2}_A
\cdot I _0 (B _{2} \otimes c)$ coincide. 

To conclude that $\Psi$ is well
defined on $H_* (\ls, \mathbb{Q})$ we may use \fullref{thm.milnor.moore}, which implies that 
the rational 
 homology of $\ls$ is generated
 by cycles $f\co  B \to X$, where $B$ is a closed oriented smooth
 $k$--manifold (in fact a product of spheres). 
Moreover, \fullref{thm.milnor.moore} implies that the relations in the
rational homology of $\ls$ are generated by maps of smooth cobordisms (actually
cylindrical cobordisms).
\end{proof}
\section {Verification of axioms} \label{axiom1and2} 
\begin{proposition} \label {proposition.pullback} Let $p\co  P \to B$ be an $ \mathcal
{F}$--fibration and $f\co  C \to B$ a smooth $k$--cycle
representing $a \in H_* (B)$. Then $$c_k ^{q} (P) (a)= \Psi (f ^{*} P) \equiv
c ^{q}_k (f ^{*} P ( [C])).$$
\end{proposition}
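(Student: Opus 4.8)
The plan is to reduce everything to the definition of the characteristic classes via PGW invariants together with the functoriality of the pullback construction for $\mathcal{F}$-fibrations. First I would observe that the statement is essentially tautological once one unwinds the definitions, but the care needed is in matching up the moduli spaces and evaluation maps on the two sides. Let $f\colon C \to B$ be a smooth $k$-cycle representing $a \in H_k(B)$, and let $P' = f^*P$ be the pullback $\mathcal{F}$-fibration over $C$. The key point is that $P'$ is classified by the composition $C \xrightarrow{f} B \to \freels$ (or $Q$), so there is a natural bundle map $\tilde{f}\colon P' \to P$ covering $f$, and this map is fiberwise a diffeomorphism of Hamiltonian fibrations $X_{b}$; in particular it carries section classes to section classes, identifies vertical Chern classes, and pulls back an admissible family $\{\Omega_b\}$ on $P$ to an admissible family on $P'$, and likewise pulls back the coupling-type class $\mathcal{C}$. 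Thus for each $A \in j_*(H_2^{\sect}(X))$ the coefficient $e^{A}$ appearing in $c^q(P)$ and in $c^q(P')$ is literally the same.

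Next I would compare the moduli spaces. Given a regular $\pi$-compatible family $\{J_b\}$ on $P$, its pullback $\{J'_c\} = \{J_{f(c)}\}$ is a $\pi$-compatible family on $P'$; after a small perturbation (using the second-category statements in the lemma following the definition of $\mathcal{J}_{\reg}$, and \fullref{proposition.pseudocycle}) we may assume both families are regular and that the relevant evaluation pseudocycles are transverse to the submanifolds $I_0(B\times M)$ and $I_0(C\times M)$ respectively. The bundle map $\tilde{f}$ induces a map $\mathcal{M}^*_{0,1}(P', A; \{J'_c\}) \to \mathcal{M}^*_{0,1}(P, A; \{J_b\})$ which is a fiber-product: it realizes $\mathcal{M}^*_{0,1}(P', A)$ as the pullback of $\mathcal{M}^*_{0,1}(P, A) \to B$ along $f\colon C \to B$. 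Consequently, under the evaluation maps, $\ev' = \tilde{f}\circ$ (the induced map), and the embedding $I_0^{P'}\colon C\times M \to P'$ satisfies $\tilde{f}\circ I_0^{P'} = I_0^{P}\circ (f\times \id_M)$.

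The computation then becomes a comparison of intersection numbers. By definition $c^q_k(f^*P)([C])$ is obtained from the coefficients $b'_A \in H_*(C\times M)$ defined by $b'_A \cdot_{C\times M} c = [\ev'] \cdot_{P'} I_0^{P'}{}_*(c)$, and then evaluated on $[C]$ via \eqref{eq.functional}. On the other side, $c^q_k(P)(a)$ is the value of the functional \eqref{eq.functional} for the coefficient $b_A \in H_*(B\times M)$ on $a = f_*[C]$. The compatibility $\tilde{f}\circ I_0^{P'} = I_0^{P}\circ(f\times\id)$ together with the fiber-product description of the moduli spaces gives, for each basis element $e_i$ of $H_*(M)$, the identity $b'_A \cdot (\,[C]\otimes e_i^*\,) = b_A \cdot (\,f_*[C]\otimes e_i^*\,) = b_A \cdot (a\otimes e_i^*)$, by projection formula / naturality of the intersection pairing under the proper map $f\times\id_M$. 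Summing over $i$ and $A$ with the same coefficients $e^A$ yields $\Psi(f^*P) = c^q_k(P)(a)$, which is the claim.

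The main obstacle I expect is purely bookkeeping: making precise the fiber-product statement for the moduli spaces (that $\mathcal{M}^*_{0,1}(f^*P, A)$ is genuinely $\mathcal{M}^*_{0,1}(P,A)\times_B C$ as oriented manifolds, with matching orientations) and ensuring that the perturbations needed for transversality on $P'$ can be taken compatibly with a chosen regular family on $P$ — i.e. that the pulled-back family can be made regular by a perturbation supported in the fibers over $C$, without disturbing the identification of coefficients $e^A$. This is the same kind of argument used in \fullref{independence} and \fullref{cobordism}, so it should go through by the standard transversality machinery of McDuff and Salamon~\cite{MS}, but the orientation comparison is the place where one must be careful.
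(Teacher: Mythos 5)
Your overall strategy coincides with the paper's: realize $\mathcal{M}^*_{0,1}(f^*P,A)$ as the fiber product of $\mathcal{M}^*_{0,1}(P,A)\to B$ with $f\co C\to B$, note that $\ev'$ is the second projection and that $I_0^{f^*P}$ is compatible with $I_0^{P}$ under the bundle map, and then compare the intersection numbers defining $b'_A$ and $b_A$. The concluding computation $b_A\cdot(a\otimes e_i)=b'_A\cdot([C]\otimes e_i)$ is exactly how the paper finishes. However, there is a genuine gap at precisely the point you flag as the "main obstacle," and as written your treatment of it is self-defeating. You say that "after a small perturbation we may assume both families are regular," and then you invoke the fiber-product description of the moduli space. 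These two moves are incompatible: the identification $\mathcal{M}^*_{0,1}(f^*P,A)\cong \mathcal{M}^*_{0,1}(P,A)\times_B C$ holds only for the literal pullback family $\{J'_c\}=\{J_{f(c)}\}$; once you perturb $\{J'_c\}$ independently on $f^*P$ to achieve regularity, the fiberwise holomorphic curves in $f^*P$ no longer correspond to those in $P$ and the comparison of intersection numbers collapses. Conversely, if you keep $\{J'_c\}=\{J_{f(c)}\}$, its regularity is not automatic, and the second-category statements you cite do not help --- they produce \emph{some} regular family on $f^*P$, not the specific pulled-back one.

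The paper resolves this by perturbing $f$ rather than the almost complex structures: one fixes a regular family $\{J_b\}$ on $P$ and perturbs $f$ to be transverse to the pseudocycle $p\circ\ev\co\mathcal{M}^*_{0,1}(P,A,\{J_b\})\to B$. This both makes the fiber product $Z$ a smooth manifold of the right dimension and, crucially, \emph{implies} that the pulled-back family is regular. The argument is linear algebra on the linearized operators: regularity of $\{J_b\}$ gives surjectivity of $D_{u,b}$ on the domain $\Omega^0_B$ of sections with constant projection, and transversality of $f$ gives that $p_*\co\ker D_{u,b}\to T_bB/f_*(TC)|_b$ is onto; hence any $\tilde v\in\Omega^0_B$ can be corrected by an element of $\ker D_{u,b}$ so as to lie in the subspace $\Omega^0_C$ of sections whose projection is constant and tangent to $f_*(TC)$, which shows the restricted operator $D^C_{u,b}$ is still onto. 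This step is the actual mathematical content of the proposition; without it the bookkeeping you defer to at the end cannot be completed, so you should supply it (or an equivalent) rather than appeal to generic perturbation of the family on $f^*P$.
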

\begin{proof} 
Let $ \{J_b\}$ be a regular family for $A$ curves in $P$.
We have maps
\begin {equation*} \xymatrix{{\mathcal {M}} ^{*} _{0,1} (P, A;
\{J_b\}) \ar [r]^-{\ev}  & P \ar [d]^-p \\
C \ar [r]^-f & B} 
\end {equation*}
Perturb $f\co  C \to B$ to be transverse to the pseudocycle $$p \circ \ev\co {\mathcal
{M}} _{0,1} ^{*} (P, A,
\{J_b\}) \to B,$$  and consider the
commutative diagram
\begin{equation*}\xymatrix{Z  \ar [r]^- {\pr_1} \ar [d]^-{\pr_2} &  \mathcal {M}
_{0,1} ^{*} (P, A, \{J_b\}) \ar [d]^-{\ev}  \\ f ^{*} (P) \ar [d] \ar [r] & P \ar
[d]^-p \\ C \ar [r]^-f & B}
\end{equation*}
where $Z$ is the pullback of the diagram. By the transversality above $Z$ is a
smooth manifold and can be tautologically identified with $\mathcal {M} ^{*} _{0,1}
( P', A,  \{J'_b\})$, where $p'\co  P' \to C$ is the pullback bundle $f ^{*} (P)$ over $C$ and $ \{J'_b\}= \{ f ^{*}
(J_b)\}$. Moreover, the evaluation map $\ev'\co  
\smash{\mathcal {M} ^{*}_{0,1}} ( P', A,  \smash{\{J'_b\}}) \to P'$ is just the map $\pr_2$ and is
a pseudocycle since $\ev$ is a
pseudocycle. The dimension of this pseudocycle is $$\dim [\ev] + k -m= (2n+
m +2c_1 (A)-2)+k-m= 2n+k+2c_1 (A)-2,$$  where $m$ is the dimension of $B$ and
 the expected
dimension of $ \mathcal {M} ^{*}_{0,1} ( P', A,  \{J'_b\})$. Thus, $\ev\co
\mathcal {M} _{0,1} ^{*}( P', A,  \{J'_b\}) \to P'$ is a pseudocycle of the
correct dimension. 

We show that the family $ \{J'_b\}$ is regular. The linearized Cauchy-Riemann
operator for $b$ in the intersection of $p \circ \ev$ with $f$ has the form:
\begin{equation*} D_{u, b}: \Omega ^{0} _{B} \equiv \big\{\xi \in \Omega ^{0} 
(S^2, u
^{*}TP) | p_* (\xi) \equiv \text {const} \big\} \to \Omega ^{0,1} (S^2, u
^{*} TX_b).
\end{equation*} 
By the regularity assumption
on $\{J_b\}$ 
this operator is onto. Moreover, by regularity we have
\begin{equation*} p_* (\ker
D_{u,b})= p_* \circ \ev_* (T_u \mathcal {M}_{0,1} (P, {J_b})).
\end{equation*}
Thus, by the transversality assumption we must have that 
\begin{equation} \label {eq.eq.p_*} p_*\co  \ker D_{u,b} \to
T_b B/ {f_* (TC)|_b} 
\end{equation}
is onto. Denote by $D _{u,b} ^{C}$ the restriction of the
operator $D _{u, b}$ to the subspace $$\Omega ^{0}_C \equiv \big\{\xi \in
\Omega ^{0} (S^2, u^{*}TP | p_* (\xi) \equiv \text {const} \in f_* {(TC)|_b})
\big\}.$$ To show that $ \{J'_b\}$ is regular we must show that $D _{u,b} ^{C}$ is also
onto. Let $ \tilde{v} \in \Omega ^{0}_B$. By \eqref{eq.eq.p_*} there
exists  $\tilde{v}_k \in \ker D _{u,b}$ and $v_C \in f_* (TC)|_b$, s.t. 
\begin{equation*} p_* ( \tilde{v})= p_* ( \tilde{v}_k) + v_C. 
\end{equation*} 
Therefore, we get that $\tilde{v} - \tilde{v}_k \in \Omega ^{0}_C$,
and so $D _{u,b} (\tilde v) = D _{u,b} ^{C} (\tilde{v} -
\tilde{v}_k)$. Since $D _{u,b}$ is onto, it follows that $\smash{D _{u,b} ^{C}}$ is
also onto and so $ \smash{\{J'_b\}}$ is regular.

 By definition, \begin{align*}c_k ^{q} (P) (a) &= \sum_ {{A}} \sum _{i}
b _{{A}} \cdot  (a \otimes e _{i} ^{*}) e_i \otimes e ^{A},\\
c_k ^{q} (P') ( [C])&= \sum_ {A} \sum _{i} b' _{{A}}
\cdot  ( [C] \otimes e _{i} ^{*}) e_i \otimes e ^{A}.
\end{align*}  
To finish the proof, we 
note that by the above discussion 
\[
\eqalignbot{ b_A \cdot _{B \times M} (a \otimes e_i) & \equiv [\ev] \cdot _P {I
_{0}} _{*} (a \otimes e_i) \cr & = [\ev'] \cdot _{P'} {I _{0}} _{*}( [C] \otimes
e_i) \cr & \equiv [b'_A] \cdot _ {C \times M} ( [C] \otimes e_i).
}\proved\]
\end {proof}
\subsection[Verification of \ref{Axiom 1}]{Verification of \fullref{Axiom 1}}
 To prove the first statement just apply \fullref{proposition.pullback} to $i\co  \pt \to
 B$.
To prove the second statement note that if $P \simeq X \times B$ then we can
take a constant family of regular compatible almost complex structures $ \{J
^{\reg}\}$ and this family is clearly parametrically regular. It follows that
the total characteristic class is \begin{equation*} c^q (P)= \sum_ {A} (B \otimes b'_A) \otimes e ^{A},
\end{equation*} where $b'_A$ is the transverse
intersection of $\ev: \mathcal M_ {0,1}(X, {A}, J ^{reg}) 
\to X$ with the fiber $M_0 \subset X$ over $0$. As a functional on $H_* (B)$, 
$c ^{q}  (P) (a) =0$ unless $\deg (a)=0$. 
\hfill\qedsymbol

\subsection[Verification of \ref{functoriality}]{Verification of \fullref{functoriality}} If $f\co  C \to B$ represents $a
\in H_k (B_1)$ as before, then 
\begin{equation*} g ^{*} c ^{q}_k (P_2) (f_* [C])= c ^{q} _{k} (P_2) (g_*f_*
[C]) = \Psi ( f ^{*} g ^{*} P_2), 
\end{equation*} 
where the last equality holds by \fullref{proposition.pullback}, and
\begin{equation*} c_k ^{q} ( g ^{*} P_2) (f_* [C])= \Psi ( f ^{*} g ^{*} P_2), 
\end{equation*} again by \fullref{proposition.pullback}.

\subsection[Proof of \ref{main theorem} assuming \ref{thm.whitney.sum}]{Proof of \fullref{main theorem} assuming \fullref{thm.whitney.sum}}
\begin{definition} The Pontryagin product $$f_1 \star f_2\co  B_1 \times
B_2 \to \ls$$  of two maps
$f_1, f_2\co  B_1,B_2 \to \ls$ is defined by $$f_1 \star f_2 (b_1,b_2,
\theta)= f_2 (b_2, \theta) \circ f_1 (b_1, \theta).$$ (The order is indeed reversed.)
 
\end{definition}
\begin{proof} For $i=1,2$, let $f_i\co  B_i \to \freels$ be as before. Let $k_i$ be the dimension
of $B_i$. Consider the maps $$ \tilde{f}_i\co B_1
\times B_2 \to
\ls, \qquad \tilde{f}_i=f_i \circ \pr_i \quad \text{ for } i=1,2,$$ where
$\pr_i \co  B_1 \times B_2 \to B_i$ are the component projections. 
Clearly, $$P _{f_1 \star f_2} \simeq P _{ \tilde{f}_1} \oplus P _{
\tilde{f}_2} \equiv P _{ \widetilde{f_2} \cdot \widetilde{f_1}}$$ (see
\fullref{definition.sum}).  By \fullref{functoriality} and \fullref{thm.whitney.sum}, 
\[
\eqalignbot{ \Psi (B_1 \times B_2, f_1 \star f_2) & \equiv c ^{q} _{k_1+k_2} (P
_{f_1 \star f_2}) (B_1 \times B_2) \cr
 &= \sum _{i+j=k_1+k_2} \pr_1 ^{*}(c
^{q}_{i} (P _{f_1})) \cup \pr_2 ^{*} (c ^{q}_{j} (P _{f_2}))
(B_1 \times B_2) \cr
& = 
\pr_1 ^{*} (c
^{q}_{k_1} (P _{f_1})) \cup \pr_2 ^{*} (c ^{q}_{k_2} (P _{f_2})) (B_1 \times B_2) \cr
& = c
^{q}_{k_1} (P _{f_1}) (B_1) \ast c ^{q}_{k_2} (P _{f_2}) (B_2) \cr
& = \Psi (B_1, f_1) \ast \Psi (B_2, f_2).
}\proved
\]\end{proof}

\begin{remark}  \label {remark.seidelrep} Under the Pontryagin product, 
the group ring of $ \pi _{1} (\Ham(M, \omega)) $ over $ \mathbb{Q}$ is $H_0
(\ls, \mathbb{Q})$. 
The restriction of $\Psi$ to degree zero, $$\Psi ^{0}\co 
{H}_0 (\ls, \mathbb{Q}) \to QH _{2n} (M),$$  is a ring homomorphism
$$S=\Psi^0\co  \mathbb{Q}[\pi_1 ( \Ham(M,
\omega))] \to QH _{2n} (M),$$ in view of \eqref{eqS}. Thus, \fullref{main theorem} is an extension of the Seidel homomorphism $S$ to the entire
Pontryagin ring $H_* (\ls, \mathbb{Q})$.
\end{remark}
\subsection[Verification of \ref{thm.whitney.sum}]{Verification of \fullref{thm.whitney.sum}} \label{section5} 
In this subsection we prove that the classes $c ^{q} _{k}$
satisfy \fullref{thm.whitney.sum}. To this end we will need a splitting
formula for PGW invariants arising from the connected sum operation  on two $
\mathcal {F}$--fibrations. To help clarify the picture we first explain why $P_1
\oplus P_2$ is the connected sum of $P _{1}, P _{2}$ in an appropriate way.
 \begin{definition} Let $P _{1}, P
_{2}$ be
two $ \mathcal {F}$--fibrations classified by $f ^{1}, f ^{2}\co  B \to \freels$. 
Define 
\begin{equation*} P _{1} \# P _{2} \equiv (B \times M \times D ^{2}_0) \cup (B
\times M \times S^1 \times I) \cup (B \times M \times D ^{2} _\infty)/ \sim 
\end{equation*}
where the equivalence relation is
\begin{align*} (b, x, 1, \theta)_0 &\sim (b, f ^{1}
_{b, \theta} (x), \theta, 0) \in B \times M \times S ^{1} \times I\\
(b, x, 1, \theta)_\infty &\sim (b, (f ^{2}_{b,\theta}) ^{-1} (x), \theta, 1) \in B
\times M \times S ^{1} \times I.
\end{align*}
\end{definition}
It is then not hard to construct a natural isomorphism between $P_1 \oplus
P_2$ and $P_1 \# P _{2}$. 
Given classes $A$ and $B$ in $j_* H ^{\sect}_2 (X_1) \subset H_2(P _{1})$
 respectively $j_* H ^{\sect}_2 (X_2) \subset H_2(P _{2})$,  there is a natural
 section class $A \# B$ in $H_2(P _{1} \# P _{2})$.  To define this class
 one represents $A$ and $B$ by sections coinciding in the  fiber over $\infty$
 for $X ^{1}_b$ and  the fiber over $0$ for $\smash{X ^{2}_b}$ respectively, (this can
 be made more precise using the definition above). It can  be directly checked
 that the class $A \# B$ is independent of choices and  satisfies $$c _{\vert}
 (A \#B)= c _{\vert}(A) + c _{\vert} (B) \text  { and }\mathcal {C} _{P_1 \#P_2}
 (A \#B) = \mathcal {C} _{P_1} (A) + \mathcal  {C} _{P _{2}} (B).$$ Given $a
 \in H_* (B \times M)$, we will denote the class      $(I ^{P}_z)_* (a) \in H_*
 (P)$ by $a$ for shorthand and similarly for  $P_1, P_2$. \begin{theorem}
 \label{splitting formula} Let  $P _{1}, P _{2}$ be two $ \mathcal {F}$--fibrations classified by $f ^{1}, f ^{2}\co  B \to \freels$,
$P=P_1 \oplus P_2$ their connected sum, $A \in H_2 (P_1), B \in H_2 (P_2)$ 
as in \fullref{def.quant.class}.
Then for all $a_1, \ldots, a_k \in H_*
(B \times M)$, and any integer $0 \leq l \leq k$,
\begin{multline*}
\PGW^{P} _{0,k} (a_1, \ldots, a_k ;C)\\=
\smash{\sum_ {\substack{i \\ {A_1}\# B_1= {C}}}} \PGW_{0,l+1}
^{ P_2} (a_1,\ldots, a_l, e_i; {A_1}
 ) \\ \cdot \PGW^ {P_1} _{0,k-l+1} (e_i ^{*}, a_ {l+1}, \ldots, a_k; {A_2}),
\end{multline*}
where $ \{e_i\}$ is a basis for $H _{*} (B \times M)$, $\{e^*_i\}$
is the dual basis.
\end{theorem}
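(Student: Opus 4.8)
The plan is to prove \fullref{splitting formula} by a neck--stretching (degeneration) argument applied to the parametric moduli space of fiber holomorphic section curves in $P=P_1\oplus P_2\simeq P_1\#P_2$, in the spirit of the gluing formula for the Seidel representation (cf \cite{Seidel} and \cite[Chapter 11]{MS}); the analytic input is the compactness and gluing theory of \cite[Chapters 6, 10]{MS} in its fibered and parametric refinement \cite{OB}. The region $B\times M\times S^1\times I$ appearing in the definition of $P_1\#P_2$ is a \emph{neck}. First I would fix an admissible family $\{\Omega_b\}$ and a $\pi$--compatible regular family $\{J_b\}$ that over the neck is a product of the fiberwise data with the standard cylindrical structure, then insert a neck of length $T$, producing families $\{\Omega_b^T\}$, $\{J_b^T\}$ whose $T\to\infty$ limit is the split structure. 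Because the constraints $a_1,\ldots,a_k\in H_*(B\times M)$ enter the invariant only through their images $(I_z)_*(a_j)\in H_*(P)$, and $I_z$ is independent of $z$ up to homology within the fibers, I may represent $a_1,\ldots,a_l$ by cycles lying over a point of $D^2_\infty$ and $a_{l+1},\ldots,a_k$ by cycles lying over a point of $D^2_0$; since a section curve meets each fiber exactly once, this pins the corresponding marked points to those two sides and is the source of the asymmetric partition $\{1,\ldots,l\}\sqcup\{l+1,\ldots,k\}$ in the statement.

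Next, compactness. A sequence of $J_b^{T_\nu}$--holomorphic simple section curves of class $C$ with $T_\nu\to\infty$, satisfying the $k$ point constraints, converges after passing to a subsequence to a stable broken configuration. Monotonicity of $M$, exactly as in \fullref{proposition.pseudocycle}, forces every fiber bubble to have positive vertical Chern number, and every nonconstant component over the limiting neck $M\times S^1\times\mathbb{R}$ is a branched cover of a trivial cylinder over a simple Reeb orbit $\{p\}\times S^1$; together with the index formula \eqref{dimension}, for generic $\{J_b\}$ the limit consists of exactly two principal components asymptotic to a single such orbit, hence meeting the middle fiber $M_b$ at one point $p$: a section $u_1$ of $X_{2,b}$ carrying the marked points constrained by $a_1,\ldots,a_l$ plus an extra marked point $z_0$, and a section $u_2$ of $X_{1,b}$ carrying the marked points constrained by $a_{l+1},\ldots,a_k$ plus an extra marked point $z_0'$, whose classes glue to $C$ as in the definition of $A\#B$, and subject to the matching condition $\ev_{z_0}(u_1)=\ev_{z_0'}(u_2)\in M_b$; for generic representatives no constraint is absorbed into a bubble tree. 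The matching condition is cut out by the diagonal $\Delta\subset M\times M$, and $[\Delta]=\sum_i e_i\times e_i^*$ by Poincar\'e duality, which is precisely the insertion $\sum_i e_i\otimes e_i^*$ in the formula.

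Conversely, by the gluing theorem \cite[Chapter 10]{MS}, in the parametric form of \cite{OB}, every matched pair as above glues, for $T$ large, uniquely up to the gluing parameter to a $J_b^T$--holomorphic section curve of class $C$ in $P$, and the gluing map is an orientation preserving diffeomorphism onto a neighborhood of the broken stratum in the compactified moduli space; for generic data these are the only boundary degenerations. Intersecting the evaluation maps with the constraint cycles (and arguing that the resulting spaces are pseudocycles as in \fullref{proposition.pseudocycle}) cuts everything down to compact oriented $0$--manifolds, and comparing signed counts yields
\begin{multline*}
\PGW^{P}_{0,k}(a_1,\ldots,a_k;C)\\
=\sum_{\substack{i\\ {A_1}\#{B_1}=C}}\PGW^{P_2}_{0,l+1}(a_1,\ldots,a_l,e_i;{A_1})\\
\cdot\,\PGW^{P_1}_{0,k-l+1}(e_i^*,a_{l+1},\ldots,a_k;{A_2}),
\end{multline*}
which is the assertion.

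The step I expect to be the main obstacle is the analytic one: compactness under neck stretching and the gluing theorem in this \emph{fibered, parametric} setting, and in particular ruling out exotic limits --- broken or bubbled principal strands, multiply covered neck cylinders over short Reeb orbits, and constraint points escaping into bubble trees. Monotonicity is exactly what controls the bubbling, as in \fullref{proposition.pseudocycle}; the neck components are handled by observing that a section class restricted to $M\times S^1\times\mathbb{R}$ leaves no room for nonconstant finite energy components beyond the single principal strand once Chern positivity and the dimension count \eqref{dimension} are imposed. A secondary difficulty is the orientation bookkeeping, which I would carry out with coherent orientations of the linearized Cauchy--Riemann operators, checking the gluing sign against the product orientation on the broken stratum.
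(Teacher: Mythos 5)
Your proposal is correct and follows essentially the same route as the paper: the paper's proof glues pairs of section curves in $P_1$ and $P_2$ meeting the divisor $K=B\times M$ (embedded via $I_\infty$ and $I_0$), cuts the matching condition with the diagonal in $K\times K$ to produce the $\sum_i e_i\otimes e_i^*$ insertion, and defers the analysis to \cite[Chapters 10, 11.4]{MS}, remarking that the result is essentially a special case of the Ionel--Parker symplectic sum formula --- which is exactly the degeneration you carry out by neck--stretching. The only cosmetic discrepancy is that your diagonal should live in $(B\times M)\times(B\times M)$ rather than $M\times M$ (the matching of the base point $b\in B$ is part of the same transversality statement), consistent with $\{e_i\}$ being a basis of $H_*(B\times M)$.
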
 
\begin{proof}  
Set $K=B \times M$.
Suppose we have two $J$--holomorphic curves $u_1, u_2$ into $P _{{f^1}}$
and $P
_{ {f^2}}$ in class $A _{1}, A _{2}$ intersecting in $K$, 
(where we identify $K$ with its
embedding in $\smash{P _{\smash{{f}^1}}}`$ by $I_\infty$ and in $\smash{P _{\smash{{f}^2}}}`$ by $I
_{0}$), then $u_1, u_2$ lie in the respective fibers $\smash{X ^{1} _{b}}$ and $\smash{X
^{2}_b}$ over the same point $b \in B$. We can then glue them to get a curve in
the fiber $X^1_b \# X ^{2}_b \simeq X _{f ^{2} (b)\,\cdot f ^{1} (b)}$ over $b$ of
the fibration $P _{1} \oplus P_2$ by exactly same argument as in Section
11.4 of \cite{MS}. 

One then shows that for generic
families $ \{J ^{1}_b\}$, $\{J ^{2}_b\}$ the moduli spaces $$ \mathcal {M} ^{*}
(P _{ {f} ^{1}}, A_1;  \{J ^{1}_b\}) \quad \text {and} \quad \mathcal {M} ^{*} ( P _{
{f} ^{2}}, A_2;  \{J ^{2}_b\})$$ are regular and the evaluation map 
\begin{equation*}  \mathcal {M} ^{*} (P _{
{f} ^{1}}, A_1;  \{J ^{1}_b\}) \times \mathcal {M} ^{*} ( P _{
{f} ^{2}}, A_2;  \{J ^{2}_b\}) \to K \times K 
\end{equation*} which takes $ (u ^{1}, u ^{2}) \mapsto (u ^{1}_\infty (0), u ^{2}_0 (0)
)$ is transverse to the diagonal. The rest of the proof is exactly the same as the
proof of the corresponding splitting statement in Chapter 10 of \cite{MS}. 
\end{proof}
\begin{remark}  Note that since all holomorphic curves of $P_1, P_2, P_1
\# P_2$ come from section classes (of the fiber $X^1, X^2, X^1 \# X^2$
respectively) they are necessarily transverse to the divisor $K$ and intersect
it in a single point. This formula is then ``essentially'' a special case of
the formula  given by Ionel and Parker~\cite{IP} for general symplectic sums
along a codimension 2  submanifold; see also Li and Ruan~\cite{Li.Ruan} for a different approach.
The main difference in our case is that we don't have global symplectic forms
on $P_1, P_2, P_1 \# P _{2}$ but rather families of forms. 
\end{remark} 
In what follows we think of $H ^{*} (B,
QH_* (M))$ as the space of linear functionals on $H_* (B)$ with
values in $QH_* (M)$. In particular an element in $H ^{*} (B,
QH_* (M))$ can be of mixed degree.
Thus, by the Kunneth formula and Poincare duality $H ^{*} (B,
QH_* (M))$ is naturally identified with $QH_* ^{B} (M)$ via
\eqref{eq.functional}. To avoid confusion for an element $a \in QH_*
^{B} (M)$ we will write $\PD (a)$ when we think of it as an element of $H ^{*} (B,
QH_* (M))$.

Considering the following elements $a, a' \in H_* (B \times M) \subset QH_* ^{B}
(M)$ \begin{gather*} a= \sum _{i} a_i \otimes m_i, \quad a'=
\sum _{j} a'_j \otimes m_j', \quad a_i, a_i' \in H_* (B), m_i, m_j' \in H_* (M),
\\
\PD(a) \cup \PD(a')= \PD \Big (\sum _{i, j} (a_i \cap a'_j)
\otimes m_i \ast m_j' \Big),
\tag*{\hbox{then}}
\end{gather*} 
where $\ast$ is the homology
quantum product. We will need the following simple Lemma. 
\begin{lemma} \label{lemma.cup.product} If $P _{\tr}=B \times (M \times S^2)$
and $a, a'$ as above, then 
\begin{equation*} 
\begin {split} \PD(a) \cup \PD(a') &= \PD \Big (\sum
_{A} \big(\PD (a) \cup \PD (a') \big)_A  e ^{A } \in QH_* ^{B} (M) \Big),\\
(\PD (a) \cup \PD (a'))_A &= \sum
_{k, l} \PGW ^{P _{\tr}} _{0,3} (a, a', e _{k,l}; A) e ^{*}_{k,l},
\end {split}
\end{equation*}
where $\{e
_{k,l}=b_k \otimes e_l \}$ is a basis for $H_* (B \times M)$.
\end{lemma}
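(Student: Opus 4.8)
The plan is to recognize that when $P_{\tr} = B \times (M \times S^2)$ is the trivial fibration, the parametric Gromov--Witten invariants $\PGW^{P_{\tr}}_{0,3}$ decouple into the ordinary three-pointed Gromov--Witten invariants of $M$ tensored with intersection theory on $B$, and then check that this product structure is exactly the definition of the cup product $\PD(a) \cup \PD(a')$ given just before the statement. First I would fix a regular $\pi$--compatible family $\{J_b\}$ for the trivial fibration; since the bundle is a product we may (as in the verification of Axiom \ref{Axiom 1}) take $J_b = J^{\reg}$ constant, an $\omega$--compatible regular almost complex structure on $M$ together with the standard complex structure on $S^2$. With this choice a fiber holomorphic curve in class $A$ (a section class of $X = M \times S^2$) sitting over $b \in B$ is precisely a point $b \in B$ together with a $J^{\reg}$--holomorphic sphere in $M$ in the corresponding spherical class, so
\begin{equation*}
\mathcal{M}^*_{0,l}(P_{\tr}, A; \{J^{\reg}\}) \cong B \times \mathcal{M}^*_{0,l}(M, A; J^{\reg}),
\end{equation*}
compatibly with evaluation maps, where on the left the evaluation records the $B$--coordinate as well.

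Next I would insert this identification into the definition of $\PGW$. Writing the inserted classes as $a = \sum_i a_i \otimes m_i$, $a' = \sum_j a'_j \otimes m_j'$ and $e_{k,l} = b_k \otimes e_l$, the intersection pairing in $(P_{\tr})^3 = (B \times M \times S^2)^3$ with $[\ev]$ factors as an intersection pairing in $B^3$ of the three $B$--components (which, because all three curves come from the \emph{same} point $b$, collapses to a single intersection pairing $a_i \cap a'_j \cap b_k$ in $B$) times the intersection pairing in $M^3$ computing the genuine $GW^M_{0,3}(m_i, m_j', e_l; A)$. Hence
\begin{equation*}
\PGW^{P_{\tr}}_{0,3}(a, a', e_{k,l}; A) = \sum_{i,j} (a_i \cap a'_j \cap b_k) \cdot GW^M_{0,3}(m_i, m_j', e_l; A),
\end{equation*}
using that the $S^2$--directions contribute trivially since the section class has fixed intersection with a fiber. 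By the duality defining the quantum product $(m_i \ast m_j')_A \cdot e_l = GW^M_{0,3}(m_i, m_j', e_l; A)$, summing over $l$ against the dual basis $e_l^*$ recovers $(m_i \ast m_j')_A$, and summing over $k$ against $b_k^*$ recovers the class $a_i \cap a'_j \in H_*(B)$. Assembling these gives
\begin{equation*}
\sum_{k,l} \PGW^{P_{\tr}}_{0,3}(a, a', e_{k,l}; A)\, e^*_{k,l} = \sum_{i,j} (a_i \cap a'_j) \otimes (m_i \ast m_j')_A,
\end{equation*}
which is precisely the class $A$--component of $\PD(a)\cup\PD(a')$ as spelled out in the display preceding the lemma, so summing over $A$ with the weights $e^A = q^{-c_{\vert}(A)} t^{-\mathcal{C}(A)}$ — and noting that $\mathcal{C}$ restricted to spherical classes of the trivial fibration is just $\omega$ — finishes the proof.

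The main obstacle I anticipate is bookkeeping rather than geometry: one must be careful that the orientations and signs in the three-fold intersection pairing on $(B \times M \times S^2)^3$ distribute correctly into the factor on $B$ and the factor on $M$, and that the collapse of three $B$--factors to one intersection product $a_i \cap a'_j \cap b_k$ (reflecting that a fiber holomorphic curve lives over a single $b$) is handled with the right diagonal classes and the right dimension count, $\deg(a) + \deg(a') + \deg(b_k) = \dim B$ being forced by the parametric dimension formula \eqref{dimension}. Once the product structure $\mathcal{M}^*_{0,3}(P_{\tr}) \cong B \times \mathcal{M}^*_{0,3}(M)$ is in hand, everything else is a direct comparison with the definitions of the quantum product and of the cup product on $H^*(B, QH_*(M))$, both recalled in the text immediately above.
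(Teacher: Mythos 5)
Your proposal is correct and follows essentially the same route as the paper: choose the constant regular family $\{J^{\reg}\}$ on the product fibration, identify $\mathcal{M}^*_{0,3}(P_{\tr},A;\{J^{\reg}\})$ with $B\times(\text{fiber moduli space})$ so that the evaluation factors through the diagonal of $B^3$, and thereby reduce $\PGW^{P_{\tr}}_{0,3}$ to the intersection product $a_i\cap a'_j$ on $B$ tensored with the three-point invariants defining the quantum product on $M$. The only cosmetic difference is that you pass all the way to $GW^M_{0,3}$ via the graph correspondence for section classes, whereas the paper stops at $GW^{M\times S^2}_{0,3}$ in section classes and leaves that last identification implicit.
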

\begin{proof}  Let $ \{J
^{ \text {reg}}\} $ be the constant family of regular
complex structures on $P _{\tr}$ compatible with a constant admissible family $
\{\Omega\}$. Then the  family $ \{J ^{ \text {reg}}\}$ is itself parametrically 
regular. We have $$\sum
_{k,l} \PGW ^{P _{\tr}} _{0,3} (a, a', e _{k,l}; A) e
^{*} _{k,l} = \sum
_{k,l} \sum _{i,j} \PGW ^{P _{\tr}} _{0,3} (a_i \otimes m_i, a_j' \otimes m_j',
e _{k,l}; A) e ^{*} _{k,l}.$$ As oriented manifolds, 
\begin {equation*} 
\mathcal {M} _{0,3} ^{*} (P _{\tr},
A; J ^{ \text {reg}}) \simeq B \times \mathcal {M} _{0,3} ^{*}(\tr, A; \{J ^{ \text
{reg}}\}).   
\end {equation*}
 Moreover the diagram
$$ \xymatrix{ B \times \mathcal {M} ^{*} _{0,3} (M \times S^2, A, J ^{ \text
{reg}}) \ar [r]^-{\ev ^{\tr}} \ar [d] & P _{\tr} ^{3} \simeq B ^{3} \times (M \times S^2)
^{3} \ar [d] \\
B \ar [r]^- {\mathrm{diag}} & B ^{3}}$$ commutes. Hence, $[\ev ^{{\tr}}]= 
[\mathrm{diag}] \otimes [\ev ^{M \times S^2}] $ as a cycle in $B ^{3} \times (M \times
S^2) ^{3}$ with the orientation pulled back from the orientation on $P_{\tr}
^{3}$ via the identification $\smash{P_{\tr} ^{3} \simeq B ^{3} \times (M \times
S^2)^3}$, where $\smash{\ev ^{\tr}}$ and $\smash{\ev ^{M \times S^2}}$ are the evaluation maps from 
\linebreak$\smash{\mathcal {M} ^{*} _{0,3} (P_{\tr}, A, \{J ^{ \text {reg}}\})}
$ and $\smash{\mathcal {M} ^{*} _{0,3} (M \times S^2, A, J ^{ \text {reg}})}$, respectively. 

Therefore, 
\begin{equation*}
\begin{split} 
 \sum
_{k,l} \PGW ^{P _{\tr}} _{0,3} &(a_i  \otimes m_i,  a'_j \otimes m'_j, e _{k,l};
A) e ^{*} _{k,l}   \\
 &= \sum _{k,l} \left ([\ev ^{\tr}] \cdot ( a_i \otimes a'_j
\otimes b_k) \otimes (m_i \otimes m'_j \otimes e_l) \right ) (b_{k} \otimes
e_l) ^{*} \\ & =
 \sum
_{k,l}  \,[\mathrm{diag}] \otimes[ \ev ^{M \times S^2}]   \cdot ( a_i \otimes a'_j \otimes
b_k) \otimes (m_i \otimes m'_j \otimes e_l) (b _{k}  \otimes e_l) ^{*}  \\
    & =\sum _{k}((a_i \cap a'_j)
\cdot b_k) b_k ^{*} \otimes \sum _{l} GW ^{M \times S^2} _{0,3} (m_i, m_j, e_l;
A) e_l ^{*}
 \\ & = (a_i \cap a'_j)
\otimes \sum _{l} GW ^{M \times S^2} _{0,3} (m_i, m_j, e_l;
A) e_l ^{*},
\end{split} 
\end{equation*}
where we used that $ \smash{[\mathrm{diag}] \cdot a_i \otimes a'_j \otimes
b_k = (a_i \cap a'_j)
\cdot b_k}$. 
Summing over all $A \in H_2 ^{\sect} (M \times S^2)$ we get the desired equality.
\end{proof}
Given an $ \mathcal {F}$--fibration
$P$, define $m ^{P}\co  H_* (B \times M) \to QH_* ^{B} (M)$
 by 
\begin {equation} \label {eq.2.2.5}m ^{P} (a) = \sum _{A,i} \PGW ^{P} _{0,2} (a,
e_i; A) e_i ^{*} \otimes e ^{A},
\end {equation} where $e_i$ are as in \fullref{splitting formula}, and extend by linearity to
all of $QH_* ^{B} (M)$. 
\begin{lemma} If $P=P_1 \oplus P_2$ then 
$$m ^{P}= m ^{P_1} \circ m ^{P_2}.$$ 
\end{lemma}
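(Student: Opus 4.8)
The plan is to obtain this as a purely formal consequence of the splitting formula (\fullref{splitting formula}), specialized to $k=2$ and $l=1$, together with two elementary facts: that the symbol $e^{(\cdot)}$ is multiplicative under the gluing $\#$ of section classes, and that $m^{P_1}$ is $\Lambda$--linear. Recall from \fullref{section5} that $P=P_1\oplus P_2$ is naturally isomorphic to the fibrewise connected sum $P_1\#P_2$, and that the glued section classes satisfy $c_{\vert}(A\#B)=c_{\vert}(A)+c_{\vert}(B)$ and $\mathcal{C}_{P_1\#P_2}(A\#B)=\mathcal{C}_{P_1}(A)+\mathcal{C}_{P_2}(B)$; hence $e^{A\#B}=e^{A}e^{B}$ in $\Lambda$, and a section class $C$ of $P$ contributes exactly through its decompositions $C=A\#B$.

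First I would expand the definition \eqref{eq.2.2.5} of $m^{P}$ on $a\in H_*(B\times M)$ and rewrite each $\PGW^{P}_{0,2}(a,e_n;C)$ using \fullref{splitting formula} with $k=2$, $l=1$, so that the input $a$ is inserted into the $P_2$--factor and the index $e_n$ into the $P_1$--factor:
\begin{equation*}
\PGW^{P}_{0,2}(a,e_n;C)=\sum_{\substack{i\\ A\#B=C}}\PGW^{P_2}_{0,2}(a,e_i;A)\cdot\PGW^{P_1}_{0,2}(e_i^*,e_n;B),
\end{equation*}
where $\{e_i\}$, $\{e_i^*\}$ are dual bases of $H_*(B\times M)$ as in that theorem. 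Substituting this into $m^{P}(a)=\sum_{C,n}\PGW^{P}_{0,2}(a,e_n;C)\,e_n^*\otimes e^{C}$, reindexing $\sum_{C}\sum_{A\#B=C}$ as a single sum over pairs $(A,B)$ of section classes of $X_2$ and $X_1$, and replacing $e^{C}$ by $e^{A}e^{B}$, yields
\begin{equation*}
m^{P}(a)=\sum_{A,B}\sum_{i,n}\PGW^{P_2}_{0,2}(a,e_i;A)\,\PGW^{P_1}_{0,2}(e_i^*,e_n;B)\;e_n^*\otimes e^{A}e^{B}.
\end{equation*}

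Next I would compute $m^{P_1}\circ m^{P_2}$ directly from \eqref{eq.2.2.5}. Since $m^{P_2}(a)=\sum_{A,i}\PGW^{P_2}_{0,2}(a,e_i;A)\,e_i^*\otimes e^{A}$ and $m^{P_1}$ is extended by $\Lambda$--linearity, applying $m^{P_1}$ and expanding $m^{P_1}(e_i^*)=\sum_{B,n}\PGW^{P_1}_{0,2}(e_i^*,e_n;B)\,e_n^*\otimes e^{B}$ produces exactly the same expression as the last display. Comparing the coefficient of each $e^{C}$ (a finite sum on both sides, by the monotonicity and finiteness argument already used for \eqref{eq.lambda}) then gives $m^{P}=m^{P_1}\circ m^{P_2}$.

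The substantive input is entirely contained in \fullref{splitting formula}, so I do not expect a real obstacle at this step; the remaining work is bookkeeping. The points needing care are: matching the gluing convention of the splitting formula (the first $l$ marked points map into $P_2$, the rest into $P_1$) with the order $m^{P_1}\circ m^{P_2}$, which is what forces $a$ into the $P_2$--factor; verifying that the insertion $\sum_i e_i\otimes e_i^*$ genuinely acts as the identity with the correct signs, so that the right-hand side is a bona fide composition of operators; and the reindexing $\sum_{C}\sum_{A\#B=C}=\sum_{(A,B)}$, which is legitimate because $(A,B)\mapsto A\#B$ is well defined and each pair is counted once.
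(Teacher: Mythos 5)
Your proposal is correct and is essentially the paper's own argument: the paper likewise expands $m^{P_1}\circ m^{P_2}(a)$ via \eqref{eq.2.2.5} and collapses the double sum using \fullref{splitting formula} with $k=2$, $l=1$, together with $e^{A\#B}=e^{A}e^{B}$; you merely run the identity in the opposite direction and spell out the bookkeeping more explicitly.
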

\begin{proof} By \eqref{eq.2.2.5},
\begin{equation*}
\begin{split} 
m^{P_1} \circ m^{P_2} (a) &= \sum _{C} \sum _{i, A\#B=C} \PGW ^{P_2} _{0,2} 
(a, e_i; A) \cdot \PGW ^{P_1} _{0,2} (e _{i} ^{*}, e _{j}; B) e ^{*} _{j} \otimes
e ^{C} \\
& = \sum _{i, C} \PGW ^{P} _{0,2} (a, e_j,C) e ^{*}_j\otimes e
^{C} = m^{P} (a),
\end{split}
\end{equation*}
where we used \fullref{splitting formula} for the second equality.
\end{proof}
\begin{lemma}  For an $ \mathcal {F}$--fibration $P$, 
$$\PD(m ^{P} (a))= \PD(c ^{q} (P)) \cup \PD(a).$$ 
\end{lemma}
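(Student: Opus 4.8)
The plan is to expand both sides into parametric Gromov--Witten invariants and match them using the splitting formula of \fullref{splitting formula}, applied to the decomposition of $P$ that splits off a trivial factor. By definition $m^P(a)=\sum_{C,i}\PGW^P_{0,2}(a,e_i;C)\,e_i^*\otimes e^C$, where $\{e_i\},\{e_i^*\}$ are dual bases of $H_*(B\times M)$. For the right-hand side I would write $c^q(P)=\sum_A b_A\otimes e^A$ with $b_A\in H_*(B\times M)$, use that $\cup$ is $\Lambda$--bilinear on $H^*(B,QH_*(M))$, and apply \fullref{lemma.cup.product} to each product $\PD(b_A)\cup\PD(a)$, obtaining
\begin{equation*}
\PD(c^q(P))\cup\PD(a)=\PD\Big(\sum_{A,A_1,i}\PGW^{P_{\tr}}_{0,3}(b_A,a,e_i;A_1)\,e_i^*\otimes e^{A}e^{A_1}\Big),
\end{equation*}
where $A_1$ runs over the section classes of $P_{\tr}=B\times(M\times S^2)$. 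Thus the assertion reduces to the identity
\begin{equation}\label{eq.toprove.final}
\sum_{C,i}\PGW^P_{0,2}(a,e_i;C)\,e_i^*\otimes e^C=\sum_{A,A_1,i}\PGW^{P_{\tr}}_{0,3}(b_A,a,e_i;A_1)\,e_i^*\otimes e^{A}e^{A_1}
\end{equation}
in $QH_*^B(M)$.

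Next I would use that $P\simeq P\oplus P_{\tr}$: the fibration $P_{\tr}$ is classified by the constant map at the identity loop, and the pointwise product with that loop is the identity, so $P\oplus P_{\tr}=P$ by \fullref{definition.sum}. Applying \fullref{splitting formula} to this decomposition with $P_1=P$, $P_2=P_{\tr}$, and both marked points (carrying $a$ and $e_i$) on the $P_{\tr}$--factor (the case $l=k=2$) gives
\begin{equation*}
\PGW^P_{0,2}(a,e_i;C)=\sum_{\substack{j\\ C=A_2\#A_1}}\PGW^{P_{\tr}}_{0,3}(a,e_i,e_j;A_1)\cdot\PGW^P_{0,1}(e_j^*;A_2),
\end{equation*}
where $A_1$ is the class on $P_{\tr}$, $A_2$ the class on $P$, and $e^{A_2\#A_1}=e^{A_2}e^{A_1}$ by the additivity of $c_{\vert}$ and $\mathcal{C}$ under $\#$ recorded before \fullref{splitting formula}. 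The one-pointed $P$--invariant is precisely the intersection number defining $c^q(P)$: from the proof of \fullref{splitting formula} the gluing point on the $P$--side is constrained to a fibre divisor $I_\infty(B\times M)$, and since $(I_\infty)_*=(I_0)_*$ on homology (the fibres of $P\to B\times S^2$ over $(b,\infty)$ and $(b,0)$ are joined by a path) we get $\PGW^P_{0,1}(e_j^*;A_2)=[\ev]\cdot_P(I_0)_*(e_j^*)=b_{A_2}\cdot_{B\times M}e_j^*$.

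Substituting this into \eqref{eq.toprove.final}, the sum over $C$ with $C=A_2\#A_1$ is absorbed (recalling $e^C=e^{A_2}e^{A_1}$), the inner sum over $j$ is carried out using $\sum_j(b_{A_2}\cdot_{B\times M}e_j^*)\,e_j=b_{A_2}$ and multilinearity of $\PGW^{P_{\tr}}_{0,3}$ in its last slot, and the symmetry of $\PGW^{P_{\tr}}_{0,3}$ in its marked points puts $b_{A_2}$ first; after relabelling $A_2\mapsto A$ this is exactly the right-hand side of \eqref{eq.toprove.final}. Here one also uses that the section classes $A_1$ of $P_{\tr}=B\times(M\times S^2)$ are parametrised by $D\in H_2(M)$ via $A_1=[\pt\times S^2]+\iota_*D$, with $e^{A_1}=q^{-c_1(D)}t^{-\omega(D)}$ --- the Novikov monomial governing the quantum product in \fullref{lemma.cup.product}. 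This proves \eqref{eq.toprove.final}, and hence the lemma.

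The step I expect to be the main obstacle is the bookkeeping in the splitting formula: checking $P\simeq P\oplus P_{\tr}$ at the level of the neck-stretching picture, tracking which fibre divisor the gluing marked point lands on so that $\PGW^P_{0,1}(e_j^*;A_2)$ is genuinely $b_{A_2}\cdot_{B\times M}e_j^*$, confirming that the monomials $e^{(\cdot)}$ compose correctly under $\#$ and match the quantum-product weights of \fullref{lemma.cup.product}, and verifying that all orientations are consistent. Given the earlier lemmas these are routine but need to be carried out with care.
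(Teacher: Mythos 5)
Your proposal is correct and follows essentially the same route as the paper: both proofs split off a trivial factor via $P\simeq P_{\tr}\oplus P$, apply the splitting formula (\fullref{splitting formula}) so that the two marked points carrying $a$ and $e_i$ sit on the $P_{\tr}$--side and the remaining one-pointed invariant on the $P$--side reproduces the coefficients $b_A$ of $c^q(P)$, and then invoke \fullref{lemma.cup.product} to identify the resulting three-pointed $P_{\tr}$--invariants with the quantum cup product. The only differences are cosmetic (which factor is labelled $P_1$ versus $P_2$, and working from the right-hand side toward a reduced identity rather than transforming $\PD(m^P(a))$ directly), and your extra care about the fibre divisor $I_\infty$ versus $I_0$ and the additivity of the Novikov exponents under $\#$ is consistent with what the paper records before the splitting formula.
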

\begin{proof} It suffices to prove this for a simple class $a \in QH_* ^{B} (M)$. Using \fullref{splitting formula} with $P
_{2}=P$ and $P _{1}= \text {tr}=B \times (M \times S^2)$ we get:
\begin {equation*}
\begin{split}   \PD(m ^{P} (a)) &= \PD \Big (\sum _{j,C} \PGW ^{P} _{0,2} (a, e_j;
C) e ^{*}_j \otimes e ^{C} \Big)\\ 
& =\PD \Big( \sum _{i,j, A\#B=C} \PGW ^{P} _{0,1} (e_i;A) \cdot \PGW ^{ P_{\tr}}
_{0,3} (e _{i} ^{*}, a, e_j; B) e ^{*}_j \otimes e ^{C} \Big )\\
& = \sum _{i, A \# B= C} \PGW _{0,1} ^{P} (e_i; A) \cdot \big(\PD(e ^{*}_i)
\cup \PD(a)\big)_{B} \otimes e ^{C} \\
& = \PD(c^q (P)) \cup \PD(a),
\end{split}
\end {equation*} where we used \fullref{lemma.cup.product}  for the next to
last equality.
\end{proof}
\subsection[Verification of \ref{thm.whitney.sum}]{Verification of \fullref{thm.whitney.sum}} Using the above lemmas we get,
\begin{gather*}
\hfill \PD(c ^{q} (P))= \PD(m ^{P} (B \times M)) = \rlap{$\PD(m ^{P_1}
\circ m ^{P_2} (B \times M))$} \hphantom{\PD(c ^{q} (P_1)) \cup \PD(c
^{q} (P_2)).}  \hfill \\ 
\hfill\hphantom{\sq \PD(c ^{q} (P))= \PD(m ^{P} (B \times M))}
= \PD(c ^{q} (P_1)) \cup \PD(c ^{q} (P_2)).\hfill \sq\\
\end{gather*}

\section {QC classes and the Hofer geometry} \label {QC.and.Hofer}
Let $p\co  P _{f} \to B$ be a smooth $ \mathcal {F}$--fibration. We explain here how
$c^q (P_f)$ gives rise to lower bounds for the positive max-length measure $L^+(f)=\max_
 {b\in B} L ^{+} (f_b)$; this will be used later in this section. We will
 assume that the family $ \{\Omega_b\}$ on $P _{f}$ has been chosen so that
 condition  \eqref{eq.ch1.area} is satisfied. Let $f\co  B \to Q$ be a general
 smooth cycle. Define a valuation \begin{equation*} \nu\co QH_* (M), QH_* ^{B} (M)
 \to \mathbb{R} \text    {\quad by \quad}\nu \Big(\sum_{A} b_A \cdot t
 ^{\epsilon_A} q ^{l_A}\Big):= \sup _{ b_A \neq 0} \epsilon_A,
\end{equation*}
and $b_A$ is in $H_*(M)$ or $H_*(B
\times M)$.
Our next
proposition is a direct generalization of Seidel's (see McDuff~\cite{DM2}). 

\begin{proposition} \label {lower bound} We have
\begin{equation} \label {eq.nu1} \nu(c^q(P _{f}))
\leq \min _{{(B, f)} \in [H]} \Big(\max _{b \in B} L ^{+} (\gamma_b)\Big),
\end{equation}
where $[H]$ represents the homotopy class of maps $f\co  B \to Q$ and
$\gamma_b$ is the loop $f (b)$ (this is defined up to an action of $S^1$).
Moreover, 
\begin{equation} \label {eq.nu2} \nu (\Psi (B,f)) \leq \min _{{(B, f)} \in [H]}
\Big(\max _{b \in B} L ^{+} (\gamma_b)\Big),
\end{equation} 
where $[H]$ now denotes the bordism class of maps $f\co  B \to Q$.
\end{proposition}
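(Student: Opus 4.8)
The plan is to transplant Seidel's lower-bound argument to the family setting. First I would observe that by \fullref{independence} the element $c^q(P_f)\in QH_*^B(M)$, and hence the number $\nu(c^q(P_f))$, depends only on the homotopy class of $f\co B\to Q$, while $\nu(\Psi(B,f))$ depends only on the oriented bordism class of $(B,f)$. Thus it suffices to prove, for one arbitrary representative, that $\nu(c^q(P_f))\leq\max_{b\in B}L^+(\gamma_b)$ (respectively the analogous bound for $\Psi$) and then minimise over the class; this reduction is exactly what turns a statement about a \emph{particular} fibration into the inequalities \eqref{eq.nu1} and \eqref{eq.nu2}.

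Next, for a fixed representative and a small $\epsilon>0$, I would equip $P_f$ with the special admissible family $\{\Omega_b\}$ constructed in \fullref{section.families} (the $S^1$--averaged family when $f$ maps into $Q$, the unaveraged one when $f$ maps into $\freels$), whose defining feature is that $\area(X_{\gamma_b},\Omega_b)=L^+(\gamma_b)+2\epsilon$ for every $b$. By \fullref{independence} the class $c^q(P_f)$ may be computed from any regular $\pi$--compatible $\{J_b\}$ compatible with this $\{\Omega_b\}$; note that such a $J_b$ is automatically $\Omega_b$--tame, since the vertical and horizontal distributions are $\Omega_b$--orthogonal and $J_b$--invariant and $\Omega_b$ is positive on each. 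A section class $A$ contributes a nonzero coefficient $b_A$ only if some fibre $X_b$ carries a $J_b$--holomorphic stable map $u$ in class $A$; being a section class, its principal component is a non-constant section and (by monotonicity its fibre bubbles have positive Chern number), so
\[
0<\int u^*\Omega_b=\Omega_b(A)=\mathcal{C}(A)+\area(X_b,\Omega_b)=\mathcal{C}(A)+L^+(\gamma_b)+2\epsilon,
\]
where the middle equality is the standard identity $\Omega(A)-\mathcal{C}(A)=\area(X,\Omega)$ for a section class, which one checks from the normalisation $\int_M\mathcal{C}^{n+1}=0$ of the coupling class. Hence $-\mathcal{C}(A)<\max_{b\in B}L^+(\gamma_b)+2\epsilon$ for every $A$ with $b_A\neq0$.

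Since $e^A=q^{-c_{\vert}(A)}t^{-\mathcal{C}(A)}$, this gives $\nu(c^q(P_f))=\sup_{b_A\neq0}(-\mathcal{C}(A))\leq\max_{b\in B}L^+(\gamma_b)+2\epsilon$; as the left-hand side is independent of $\epsilon$ I may let $\epsilon\to0$ and then minimise over the homotopy class to obtain \eqref{eq.nu1}. For \eqref{eq.nu2}, formula \eqref{eq.functional} shows that the coefficient of $e^A$ in $\Psi(B,f)=c^q_k(P_f)([B])$ vanishes whenever $b_A=0$, so $\nu(\Psi(B,f))\leq\nu(c^q(P_f))$; combining this with the bordism invariance of $\nu(\Psi(B,f))$ from \fullref{independence} finishes the argument.

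The main obstacle — really the only substantive input — is the area identity $\Omega_b(A)=\mathcal{C}(A)+\area(X_b,\Omega_b)$ together with the fact, established in \fullref{section.families}, that the special family realises the fibrewise area as $L^+(\gamma_b)+2\epsilon$; once these are in hand the rest is bookkeeping with the Novikov valuation and appeals to \fullref{independence}. A secondary point to watch is that the curves counted by the PGW invariants (sections, possibly with fibre bubbles) are necessarily non-constant with energy exactly $\Omega_b(A)$, so that the estimate above is not vacuous.
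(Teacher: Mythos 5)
Your proposal is correct and follows essentially the same route as the paper: both arguments use the special admissible family with fibrewise area $L^+(\gamma_b)+2\epsilon$, derive $-\mathcal{C}(A)<\area(p^{-1}(b),\Omega_b)$ from positivity of energy of the $J_b$--holomorphic representative whenever $b_A\neq0$, let $\epsilon\to0$, and invoke \fullref{independence} for the homotopy/bordism invariance. Your added remarks on tameness and the coupling-class normalisation merely make explicit steps the paper leaves implicit.
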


\begin{proof} \label {proof.lower.bound}
Let $$c^q (P)=\sum_ {{A}} b _{{A}} \otimes q^ {-c _{ \text {vert}} ( {A})}
 t ^{ -\mathcal {C} ( {A})}.$$  If $b_A \neq 0$ in $H _{*} (B \times M)$
 then there is a $J_b$--holomorphic
 curve $u\co  (S^2, j) \to  X _{b} \subset P_f$ in class $A \in \smash{H_2 ^{S} (P_f)}$.
On the other hand $[\Omega_b] = \mathcal {C}+\pi^* ([\alpha_b])$, for
some area form $\alpha$ on $S^2$, where $ [\Omega _{b}]$ is the
cohomology class of $\Omega _{b}$ in $H ^{2} (X _{b})$. Since $\Omega_b$ tames
$J_b$, we get $$0< [\Omega_b] (A)=  \left ( \mathcal {C}+ \pi^* (\alpha_b) \right ) (A)=
\mathcal {C} ( A)+ \area( p ^{-1} (b), \Omega_b).$$ Therefore,
\begin{equation} \label {eq.last} -\mathcal
{C}  ({A}) <\area( p ^{-1} (b), \Omega_b)= L^{+} (\gamma_b) + 2\epsilon \leq
\max _{b \in B} L ^{+} (\gamma_b) + 2\epsilon
\end{equation} 
for all $A$. Passing to the
limit in $A$ and $\epsilon$ we get $\nu ( c^q (P _{f})) \leq \max _{b \in B} L ^{+} (\gamma_b) $.  Since the left 
hand side of \eqref{eq.last} depends only on the homotopy class
of $f$, we get the inequality \eqref{eq.nu1}. Inequality \eqref{eq.nu2} follows
by the same argument and \fullref{independence}. 
\end{proof}  
\subsection {Calculation for some symmetric \texorpdfstring{$ \mathcal {F}$--}{F-}fibrations}
\label {section.lie.group}

Let $q\co  Y \to B$ be a principal $S^1$--bundle  and
$\hat{f}\co  Y \to \Ham(M, \omega)$ an $S^1$--equivariant map with respect to the right action of
some subgroup $\gamma\co  S^1 \to \Ham(M, \omega)$ on $ \Ham(M,
\omega)$. Recall from \fullref{Y.action} that we have an induced cycle $f\co B
\to Q$, and an induced fibration $p\co  P _{f} \to B$. 
In this section, we
give an expression for the ``leading-order term'' contribution to the total
quantum characteristic class $c^q (P _{f})$ and give a proof of \fullref{main.lie.group}. This extends the
calculation for $S^1$ actions in \cite [Theorem 1.10]{MT}.

By \fullref{lemma.isomorphism}, 
$P _{f}$ can be identified with $h\co  Y \times _{S^1} X
_{\gamma} \to B$. The bundle $Y \times _{S^1} X _{\gamma}$
comes with an admissible family $ \{\Omega_{b}\}$ and a compatible family $\{J_{b}\}$
constructed in \fullref{admissible.family}.
 To understand the
behavior of fiber holomorphic curves in $P _{f}$, we need to first understand $
\tilde{ J}$ holomorphic curves in $X _{\gamma}$, where $ \tilde{J}$ is
the almost complex structure described in \fullref{admissible.family}. 
Each fixed point $x$ of the $S^1$--action $\gamma$ gives rise to
a $ \tilde{J}$--holomorphic section of $X _{\gamma}$ defined by $$\sigma_x = S ^{3}
\times _{S^1}
\{x\} \subset X _{\gamma}.$$ Denote by $F_{\max}$ the maximal fixed point set
of the
Hamiltonian $S^1$--action $\gamma$ on $M$, ie the maximal set of the
generating Hamiltonian $H$ of $\gamma$. Let $\sigma_{\max} \in H_2 (X_\gamma)$
denote the homology class of the section $\sigma_x$ for $x \in F _{\max}$. For each $x \in F_{\max}$
we have a $ \tilde{J}$ holomorphic $\sigma _{\max}$--class curve.
An important observation due to Seidel is that these are the only $
\tilde{J}$--holomorphic curves in that homology class (cf
\cite [Lemma 3.1]{MT}); and so the moduli space of these unparametrized curves
is identified with $F
_{\max}$. Since the $S^1$--action $\beta$ (see
\eqref{eq.action.beta}) on $X _{\gamma}$  maps each section $\sigma
_{x}$ to itself, it follows
that the unparametrized moduli space $\mathcal M ^*
_{0,0}({P _{f}}, \sigma_{\max}; \{J_{b} \})$ can
be identified with $B \times F_{\max}$. In particular it is a compact manifold.
Let $E$ be the obstruction bundle over this moduli space. The fiber
of $E$ at $ (b, x) \in B \times F_{\max} $ is the cokernel of the operator
\begin{equation*} D_{u, b}\co  \big\{\xi \in \Omega ^{0}  (S^2, u
^{*}TP_f) | dp_f (\xi) \equiv \text {const} \big\} \to \Omega ^{0,1} 
(S^2, u ^{*}TX _{b}), 
\end{equation*}   
where $u\co  S^2 \to X_b$ parametrizes the
section $\sigma_x \subset X_b$. 
We write $\smash{D ^{\vert} _{u,b}}$ for the restriction of $D _{u,b}$ to $\Omega ^{0}  (S^2, u
^{*}TX _{b} )$. 
\begin{lemma} $\coker D ^{\vert} _{u, b} \simeq \coker D
_{u,b}$.
\end{lemma}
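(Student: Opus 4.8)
The plan is to identify $D^{\vert}_{u,b}$ with the restriction of $D_{u,b}$ to vertical deformations and then run the long exact (snake) sequence. Write $\Omega^0_B=\{\xi\in\Omega^0(S^2,u^*TP_f)\mid dp_f(\xi)\equiv\mathrm{const}\}$ for the domain of $D_{u,b}$. The subspace $\Omega^0(S^2,u^*TX_b)\subset\Omega^0_B$ of sections $\xi$ with $dp_f(\xi)\equiv 0$ is exactly the domain of $D^{\vert}_{u,b}$, the quotient $\Omega^0_B/\Omega^0(S^2,u^*TX_b)$ is $T_bB$ via $\xi\mapsto dp_f(\xi)$, and $D_{u,b}$ and $D^{\vert}_{u,b}$ share the target $\Omega^{0,1}(S^2,u^*TX_b)$, with $D^{\vert}_{u,b}$ the honest restriction of $D_{u,b}$. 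Thus we have a short exact sequence of two-term complexes, namely $\bigl(\Omega^0(S^2,u^*TX_b)\xrightarrow{D^{\vert}_{u,b}}\Omega^{0,1}(S^2,u^*TX_b)\bigr)\hookrightarrow\bigl(\Omega^0_B\xrightarrow{D_{u,b}}\Omega^{0,1}(S^2,u^*TX_b)\bigr)$ with quotient complex $\bigl(T_bB\xrightarrow{\ 0\ }0\bigr)$, and the associated long exact sequence reads
\[
0\to\ker D^{\vert}_{u,b}\to\ker D_{u,b}\xrightarrow{\ dp_f\ }T_bB\xrightarrow{\ \delta\ }\coker D^{\vert}_{u,b}\to\coker D_{u,b}\to 0 .
\]
Hence it suffices to show that the connecting map $\delta$ vanishes, equivalently that $dp_f\colon\ker D_{u,b}\to T_bB$ is surjective; then $\coker D^{\vert}_{u,b}\to\coker D_{u,b}$ is an isomorphism, which is the claim.

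To prove $dp_f\colon\ker D_{u,b}\to T_bB$ is onto I would use that the holomorphic section through the fixed point is defined over all of $B$. Fix $x\in F_{\max}$, so that $u$ parametrizes $\sigma_x\subset X_b$. Under the identification $P_f\cong Y\times_{S^1}X_\gamma$ of \fullref{lemma.isomorphism}, the maps $u_{b'}\colon S^2\to X_{b'}$ parametrizing $\sigma_x\subset X_{b'}$ form a smooth family over $b'\in B$ with $\bar\partial_{J_{b'}}u_{b'}=0$ for every $b'$, because the family $\{J_{b'}\}$ descends from the single $S^1$--invariant structure $\tilde J$ on $X_\gamma$ (cf \fullref{admissible.family}) and the sections $\sigma_x$ are $\tilde J$--holomorphic. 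Given $v\in T_bB$, choose a path $b(t)$ with $b(0)=b$, $\dot b(0)=v$; differentiating the family $t\mapsto u_{b(t)}$ at $t=0$ yields a section $\eta\in\Omega^0(S^2,u^*TP_f)$ with $dp_f(\eta)\equiv v$ constant, so $\eta\in\Omega^0_B$, and $D_{u,b}\eta=0$ by the standard fact that the derivative along a parameter path of a family of solutions of $\bar\partial_{J}(\cdot)=0$ lies in the kernel of the parametric linearization. Therefore $\eta\in\ker D_{u,b}$ and $dp_f(\eta)=v$, so $dp_f|_{\ker D_{u,b}}$ surjects onto $T_bB$, as required.

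The only substantive input here is this surjectivity; everything else is diagram chasing. Equivalently one can phrase it via the moduli space: by \fullref{lemma.isomorphism} together with the observation of Seidel that the only $\tilde J$--holomorphic $\sigma_{\max}$--curves are the $\sigma_x$, the space $\mathcal M^*_{0,0}(P_f,\sigma_{\max};\{J_b\})\cong B\times F_{\max}$ maps to $B$ by the projection, so through each point there is a local slice of fiberwise holomorphic curves realizing every direction of $T_bB$ inside $\ker D_{u,b}$. The one point that needs a little care is checking that the passage from the nonlinear family $\{u_{b'}\}$ to an element of $\ker D_{u,b}$ is valid for the \emph{parametric} operator $D_{u,b}$ rather than the purely fiberwise $D^{\vert}_{u,b}$; this is routine once the linearization of $\bar\partial$ is written out in a local trivialization of $P_f\to B$ near $b$. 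Given surjectivity of $dp_f|_{\ker D_{u,b}}$, the long exact sequence above forces $\delta=0$, and hence $\coker D^{\vert}_{u,b}\simeq\coker D_{u,b}$.
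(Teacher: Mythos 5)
Your proof is correct and takes essentially the same route as the paper: the paper also reduces the claim to the surjectivity of $dp_f\colon\ker D_{u,b}\to T_bB$, which it deduces from the fact that $p_f\circ\ev$ on the moduli space $B\times F_{\max}$ is a submersion, and then notes that equality of the images of $D_{u,b}$ and $D^{\vert}_{u,b}$ follows. Your long exact sequence packaging and the explicit differentiation of the family of sections $\sigma_x$ over $B$ are just a more detailed rendering of the same argument.
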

\begin{proof} 
Since the map 
\begin{equation*} p_f \circ \ev\co  \mathcal {M} ^{*}_{0,1} (P_{f}, \sigma
_{\max}; \{J_b\}) \to B
\end{equation*}
is a submersion, the homomorphism
\begin{equation*} dp_f\co  \ker D_{u,b} \to T_b B  
\end{equation*}
is onto. It easily follows that $D ^{\vert} _{u,b}$ and $D _{u,b}$ have the same
image.
\end{proof} 

Thus, the fiber $E_b$ of the obstruction bundle $E$ is $\smash{{\coker D ^{\vert}
_{u,b}}}$. The fundamental class of $\mathcal {M} _{0,0} ^{*} (P _{f}, \sigma
_{\max}; \{J_b ^{\reg}\})$ is identified with $\PD _{B \times F _{\max}} e (E),$
\; see \cite [Chapter 7.2]{MS}.  
We thus have the following direct generalization of \cite
[Theorem 1.9]{MT}.
 \begin{proposition}
\label{thm.leading.term}
Let $f\co  B \to Q$ and the obstruction bundle $E$ be as above. 
Then \begin{multline*}c^q (P_f)=  \PD _{B \times F _{\max}} e (E) \otimes q ^{-m _{\max}}t ^{H
_{\max}}\\ + \sum _{A \in H_2 ^{S} (M)| \omega (A)>0} b _{\sigma_{\max+A}} \otimes
q ^{-m _{\text{max}}- c_1 (A)} \; t
^{H_{\text{max}} -\omega ({A})},\end{multline*} where $m_{\max}=c _{\vert} (\sigma_x)=\sum
_{i} k_i$ and $H _{\max}$ is the maximum value of the normalized Hamiltonian 
generating $\gamma$.
\end{proposition}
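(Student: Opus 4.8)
The plan is to read the leading term directly off the defining sum $c^q(P_f)=\sum_A b_A\otimes q^{-c_{\vert}(A)}t^{-\mathcal{C}(A)}$. Two ingredients are needed: that $\sigma_{\max}$ is the section class of maximal coupling value carrying a fibre-holomorphic representative, and that its coefficient $b_{\sigma_{\max}}$ is the virtual fundamental class $\PD_{B\times F_{\max}}e(E)$ recorded immediately before the statement.

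For the first ingredient, recall from \fullref{lemma.isomorphism} that $P_f\cong Y\times_{S^1}X_\gamma$ with the $\beta$-symmetric data of \fullref{admissible.family}. Seidel's rigidity lemma (\cite[Lemma 3.1]{MT}) forces any fibre $J_b$--holomorphic stable map representing a section class $A$ with $b_A\neq0$ to have a section as principal component and bubble components that are $J_b$--spheres lying in the fibres $M$ --- hence of positive $\omega$, by monotonicity --- and the principal section class has $\mathcal{C}\le\mathcal{C}(\sigma_{\max})$. Writing $A=\sigma_{\max}+A'$ with $A'\in H_2^S(M)$ (section classes form a torsor over fibre classes) and using $\mathcal{C}|_{H_2^S(M)}=[\omega]$, $c_{\vert}|_{H_2^S(M)}=c_1$, we get $-\mathcal{C}(A)=H_{\max}-\omega(A')$ and $-c_{\vert}(A)=-m_{\max}-c_1(A')$, with $\omega(A')\ge0$ and $\omega(A')=0$ only when $A'=0$. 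A short computation with $\tilde\omega$ gives $-\mathcal{C}(\sigma_x)=H(x)$, whence the top power is $t^{H_{\max}}$; and along $\sigma_x=S^3\times_{S^1}\{x\}$ the vertical tangent bundle is the sum $\bigoplus_i S^3\times_{S^1}N_{k_i}$ of complex line bundles over $\mathbb{CP}^1$, whose total first Chern number is $\sum_i k_i=:m_{\max}=c_{\vert}(\sigma_x)$. This establishes the shape of the expansion.

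For the second ingredient, $b_{\sigma_{\max}}$ is by definition the transverse intersection of $\ev\co\mathcal{M}_{0,1}(P_f,\sigma_{\max};\{J_b\})\to P_f$ with $I_0(B\times M)$, computed after a small regular perturbation. The one-marked-point moduli space is $(B\times F_{\max})\times S^2$ (the sections $\sigma_x$ over $b\in B$, marked point $z\in S^2$, with trivial automorphisms since a section is embedded), its evaluation sending $(b,x,z)$ to $\sigma_x(z)\in X_b$; the preimage of $I_0(B\times M)$ is $(B\times F_{\max})\times\{0\}$, with image $I_0(B\times F_{\max})$ under $\ev$. Since the unperturbed family need not be regular for $\sigma_{\max}$, the actual contribution is cut out by the obstruction bundle $E\to B\times F_{\max}$ (fibre $\coker D^{\vert}_{u,b}$ by the preceding lemma), whose virtual fundamental class is $\PD_{B\times F_{\max}}e(E)$ by \cite[Chapter 7.2]{MS}; transporting this through $\ev$ and the incidence constraint yields $b_{\sigma_{\max}}=(I_0)_*\PD_{B\times F_{\max}}e(E)$, i.e.\ $\PD_{B\times F_{\max}}e(E)$ regarded inside $H_*(B\times M)$. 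Independence of the auxiliary choices is \fullref{independence}, and combining the two ingredients gives the displayed formula.

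The main obstacle is the obstruction-bundle bookkeeping for the leading coefficient: showing, with the correct orientations, that for a generic regular perturbation of the non-regular family $\{J_b\}$ the holomorphic curves clustering near $B\times F_{\max}$, cut by the condition $I_0(B\times M)$, assemble into $e(E)$. This is Seidel's original computation in family form (cf.\ \cite[Chapter 7.2]{MS}); the extra care required here is that we work with a family of symplectic forms rather than a single form, and that the incidence constraint is imposed in the total space $P_f$ rather than in a single fibre. Ruling out stable maps with a non-section principal component at leading order --- the source of the "lower $t$--order" remainder --- is the other place where Seidel's rigidity and monotonicity enter.
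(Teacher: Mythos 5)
Your proof is correct and follows essentially the same route as the paper: the leading coefficient is extracted from the obstruction bundle over $B \times F_{\max}$ (using the identification of the unparametrized moduli space with $B \times F_{\max}$, the transverse intersection of $\ev$ with $I_0(B\times M)$, and the virtual class $\PD_{B\times F_{\max}}e(E)$ from \cite[Chapter 7.2]{MS}), while the shape of the remainder is controlled by Seidel's rigidity lemma \cite[Lemma 3.1]{MT} together with monotonicity, exactly as in the paper. Only note a sign slip: your interim inequality $\mathcal{C}\le\mathcal{C}(\sigma_{\max})$ for the principal section class should read $\mathcal{C}\ge\mathcal{C}(\sigma_{\max})$, which is what your subsequent (correct) computation $-\mathcal{C}(A)=H_{\max}-\omega(A')$ with $\omega(A')\ge 0$ actually uses.
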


\begin{proof} Since the evaluation map
$\ev\co  
\mathcal {M} _{0,1} ^{*} (P _{f}, \sigma _{\max}, \{J _{b}\}) \to P _{f}$
intersects $I_{0}
(B \times M)$ transversally at $B \times F _{\max}$, it can be readily deduced
from the above discussion that the class corresponding to the transverse intersection of
$$\ev ^{\reg}\co 
\mathcal {M} _{0,1} ^{*} (P _{f}, \sigma _{\max}, \{J _{\reg}\}) \to P_f$$ with $I_{0}
(B \times M)$ is $\PD _{B \times F} e (E) \in H_* (B \times M)$.

By \cite [Lemma 3.1]{MT} there are no contributions from sections $\sigma
_{\max} + A$ with $\omega (A) < 0$; this also follows
from the argument in the proof of \fullref{4}.
\end{proof}
To understand the obstruction bundle $E$, we need to understand cokernel of the
linearized Cauchy-Riemann operator
$$D ^{\vert}_{u, b}\co  \Omega ^{0}  (S^2, u ^{*}TX _{b}) \to  \Omega ^{0,1}
(S^2, u ^{*}TX_{ b}),$$ where $u: S^2
\to X _{ b}$ parametrizes the section $\sigma_x \in X _{ b}$, $x
\in F_{\max}$ as before.

 The
complex normal bundle $N({\sigma_x})$ of $\sigma _{x}$ inside $TX _{b}$ can be identified
with the bundle \begin {equation*} (T _{x}M, J _{x}) \times _{S^1} S ^{3} \to S^2
\end {equation*}
and so splits into a sum of complex line bundles 
\begin{equation} \label {eq.splitting} \bigoplus _{i=1} ^{n} L _{k_i},
\end{equation}
 where
the degree of $L_{k_i}$ is $k_i$. In other words each $S^1$ invariant summand $V_i \simeq \mathbb{C}$ of
$T _{x} M$, on which $S^1$ is acting by $v \mapsto e ^{-2\pi ik_i \theta} v$,
gives rise to the summand $L _{k_i}$ of $N (\sigma_x)$. Thus, $$TX
_{ b}| _{\sigma_x}= \Big(\bigoplus _{i=1} ^{n} L _{k_i}\Big) \oplus L _{2}\equiv L ,$$
where $L _{2}$ is the tangent bundle to $\sigma _{x}$. Since $x \in F
_{\max}$, $k_i \leq 0$ for all $z$. 

By proof
of \cite[Lemma 3.2]{MT} the operator $\smash{D ^{\vert}_{u_x, b}}$ 
is complex linear and is the Dolbeault operator $
\bar{\partial}$ on $\smash{TX _{\gamma}|_{ \sigma_x}}$, with respect to a holomorphic
structure for which the splitting \eqref{eq.splitting} is holomorphic. Thus,
the cokernel of $\smash{D ^{\vert}_{u_x, b}}$ is $\smash{H ^{0,1}_ {\bar \partial} ( S^2, L )}  \simeq \smash{(H
^{1,0}_ { \bar \partial} (S^2, L ^{*} )) ^{*}}$. The latter can be identified 
with $\smash{(H ^{0} (S^2, L^{*} \otimes K _{x})) ^{`*}}$, 
where $\smash{K _{x}=T^* (\sigma_x)}$ denotes the canonical bundle of $\sigma_x$. 
$$E _{ b, x,i}= H ^{0} (S^2, L _{k_i}^{*} \otimes K
_{x}).\leqno{\hbox{Set}}$$  
This latter space can be identified with the space of degree $n_i
\equiv -k_i-2$ homogeneous polynomials in
$X, Y$, where $X,Y$ denote the homogeneous coordinates on $ \mathbb{CP} ^{1}$.
Thus, a section in $\smash{H ^{0} (S^2, L
_{\smash{k_i}}^{*}} \otimes K
_{x})$ is completely determined by its holomorphic $n_i$--jet over $0  \in \smash{D^2_0}
\subset S^2$. Therefore, 
\begin{equation*} E _{ b, x,i}\simeq \mskip-2mu\bigoplus _{0 \leq j \leq
n_i}\mskip-3mu \text {Hom} \left((T_0 \sigma_x )^{\otimes j}, K_x|_0
\otimes L ^{*} _{k_i}|_0 \right) \mskip-2mu\simeq \mskip-2mu\bigoplus _{0 \leq j \leq
n_i} (K _{x}|_0 ^{\otimes j}) \otimes (K_x|_0
\otimes L ^{*} _{k_i}|_0) 
\end{equation*}
The cokernel $E _{ b, x}$  of $D ^{\vert}_{u_x, b}$ is  then
\begin{equation*} E _{ b, x}=\bigoplus _{i} E _{b, x,i} ^{*}, 
\end{equation*}
whose real dimension is the \emph{virtual index} of $\gamma$, defined by
 \begin{equation*} I(\gamma)=
 \sum _{\substack {1 \leq i \leq n \\ k_i  \leq -1}} 2(-k_i -1).
\end{equation*}  
 Let $ \wwtilde{ \mathcal {K}}$ be the bundle $Y 
\times_{S^1} \mathbb{C}$
and set $ \mathcal {K}=\pr_1 ^{*} \wwtilde{ \mathcal {K}}$, 
 where $\pr_1\co  B \times F_{\max} \to B$, and $\pr_2\co B \times F _{\max} \to F _{\max}$ are the
projections.

Then $ \mathcal {K}$ is the bundle over $B \times F
_{\max}$ whose fiber over $(x,b)= T _{0} \sigma _{x} =K_x |_0 ^{*}$, where $K_x=
T^* \sigma_x \subset X_{b}$ (cf \eqref{eq.action.beta}, \eqref{eq.action.beta2}). We also
have natural bundles $L _{i}$ over $B \times F _{\max}$ coming from the bundles
$L _{k_i}$ above.

Note that $e
(L_j)$
and $e ( \mathcal {K})$ are algebraically independent in the cohomology ring of
$B \times M$.
 The Euler class of $E$ is given by 
\begin{equation} \label {eq.31}
\begin{split} e (E) &= \prod_i \prod _{0 \leq j \leq n_i} ((j+1) e ( \mathcal {K}) + e
(L_i)).\\
& = \prod_i (n_i+1)! \;e ^{ \sum _{i} (n_i+1)} ( \mathcal {K}) + \text { mixed
terms}.
\end{split} 
\end{equation}
We can thus rewrite \eqref{eq.31}, using that $n_i = -k_i -2$, as
\begin{equation*}  e (E)= \sum _{0 \leq p \leq \frac{I(\gamma)} {2}} e
^{p} ( \mathcal {K})  \cup a _{p},
\end{equation*}
where $a _{p}$ are in $H ^{ \frac{I (\gamma)}{2} - p}(B \times F _{\max})$,
consisting of sums of products of classes $e (L_i)$ with some coefficients.
\begin{example} \label {ex.S3} Let $ \hat{f}\co  S ^{3} \to \Ham(
\mathbb{CP} ^{n}, \omega)$, $\gamma\co  S^{1} \to \Ham( \mathbb{CP} ^{n},
\omega)$ and the associated map 
$$f_h\co  S ^{2} \to Q= (\freels( \mathbb{CP} ^{n}, \omega) \times S
^{\infty})/ S ^{1}$$ be as in \fullref{section.example.S3}. Then $F
_{\max}= \max =[1, 0, \ldots, 0]$, $\smash{\mathcal {M} ^{*}_{0,1} (P_{f_h}, \sigma
_{\max}; \{J_b\})}$ is identified with $S ^{2}$ and the obstruction bundle $E$
is identified with the complex line bundle associated to the Hopf bundle $h\co  S
^{3} \to S ^{2}$, whose homological Euler class is $ [-\pt] \in H_0(S^2)$. Thus,
by \fullref{thm.leading.term} 
\begin{equation*} c ^{q} (P _{f_h})= ([-\pt] \otimes [\pt]) \otimes q ^ {- m
_{\max}} t ^{H _{\max}} + \text {lower $t$--order terms}.
\end{equation*} 
\end{example}
\begin{theorem} \label{thm.lie.group} Let $\hat {f}\co  Y \to \Ham( M,
\omega)$ be as above and $B= Y/S^1$. 
Every nonzero term $$e ^{p} ( \mathcal {K}) \cup a _{p} \in  H^*(B \times F _{\max})$$
in the expansion for $ e (E)$
 gives rise to a nontrivial characteristic class
$ c ^{q} _{2p} (P_f)$.
Moreover, it gives rise to cycles $f\co  C \to Q$,
minimizing the positive max-length measure in their bordism class.
\end{theorem}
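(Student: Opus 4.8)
The strategy is to combine the leading-order computation of Proposition \ref{thm.leading.term} with the valuation estimate of Proposition \ref{lower bound} (i.e.\ \fullref{lower bound}), and to extract nonvanishing from the algebraic independence of $e(\mathcal{K})$ and the $e(L_i)$. First I would recall that by Proposition \ref{thm.leading.term} the top-order (in $t$) part of $c^q(P_f)$ is $\PD_{B\times F_{\max}} e(E) \otimes q^{-m_{\max}} t^{H_{\max}}$, and that $e(E)$ decomposes as $\sum_{0\le p\le I(\gamma)/2} e^p(\mathcal{K}) \cup a_p$ with $a_p \in H^{I(\gamma)/2 - p}(B\times F_{\max})$ built out of the $e(L_i)$. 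Pairing the component $e^p(\mathcal{K}) \cup a_p$ against homology classes of $B$ of the appropriate degree computes $c^q_{2p}(P_f)$; the point of the algebraic independence remark is precisely that distinct terms $e^p(\mathcal{K})\cup a_p$ cannot cancel each other inside $H^*(B\times F_{\max})$, so a nonzero term in the expansion of $e(E)$ really does produce a nonzero cohomology class with values in $QH_*(M)$, hence a nontrivial $c^q_{2p}(P_f)$. This gives the first assertion.

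For the minimality assertion, fix a nonzero term $e^p(\mathcal{K})\cup a_p$. I would produce the cycle $f\co C\to Q$ by choosing a closed oriented submanifold (or bordism cycle) $g\co C \to B$ of dimension $2p$ dual to a class that pairs nontrivially with the part of $e^p(\mathcal{K})\cup a_p$ lying in $H^{2p}(B)\otimes H^0(F_{\max})$ (using that $\mathcal{K}$ is pulled back from $B$ via $\pr_1$, so $e^p(\mathcal{K})$ lives in $H^{2p}(B)$, while $a_p$ supplies the $F_{\max}$-direction), and then set $f = \iota \circ g$ where $\iota\co B\to Q$ is the classifying cycle of $P_f$; equivalently restrict the whole $\mathcal{F}$-fibration $P_f$ to $C$. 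By Proposition \ref{proposition.pullback} (applied as $c^q_{2p}(P_f)(g_*[C]) = \Psi(g^*P_f)$) this pullback has $\Psi(C, f) \ne 0 \in QH_{2n+2p}(M)$, with $t$-valuation at least $H_{\max}$ coming from the surviving leading term — here one must check that no lower-$t$-order terms in $c^q(P_f)$ can kill the leading contribution after pairing with $[C]$, which follows because $t$-orders are strictly separated (the correction classes $\sigma_{\max}+A$ have $\omega(A)>0$). Then $\nu(\Psi(C,f)) \ge H_{\max}$, and inequality \eqref{eq.nu2} of Proposition \ref{lower bound} gives $H_{\max} \le \min_{(C,f)\in[H]} \max_{b}L^+(\gamma_b)$. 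On the other hand the constant representative — restricting to $C$ the cycle whose loops are all (translates of) $\gamma$, using $S^1$-equivariance of $\hat f$ as in Remark \ref{remark.index} — has $L^+(\gamma_b) = L^+(\gamma) = H_{\max}$ for every $b$ (this is the normalization built into $\wwtilde\Omega^\infty_\gamma$ in \fullref{section.families}, via \eqref{eq.ch1.area}), so the bound is attained and $f$ is minimizing in its bordism class.

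The main obstacle I expect is the degree bookkeeping that makes the pullback cycle $C$ exist and detect exactly the term $e^p(\mathcal{K})\cup a_p$: one needs $\dim C = 2p = I(\gamma)$-minus-the-$F_{\max}$-codimension-carried-by-$a_p$, and one needs $g_*[C]$ to pair nontrivially with the $B$-factor of that term without interference from the other terms $e^{p'}(\mathcal{K})\cup a_{p'}$ — this is where algebraic independence of $e(\mathcal{K})$ and the $e(L_i)$ in $H^*(B\times F_{\max})$ is doing real work, and it should be stated carefully. A secondary subtlety is verifying that the leading $t$-order term of $c^q(P_f)$, which lives in $H_*(B\times M)$, genuinely contributes after applying the functional \eqref{eq.functional} and restricting to $H_{2p}(B)$: one must track that the degree constraint \eqref{degree2} forces the $M$-class of the leading term to be $[F_{\max}]$ (or its Poincar\'e dual), which pairs nontrivially against the point class, so that $\Psi(C,f)$ indeed has a nonzero coefficient of $t^{H_{\max}}$. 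Everything else is an assembly of Propositions \ref{thm.leading.term}, \ref{proposition.pullback}, \ref{lower bound} already available in the excerpt. Theorem \ref{main.lie.group} is then the special case $p = I(\gamma)/2 = \tfrac12\dim B$ with $a_p$ a nonzero multiple of the fundamental class of $F_{\max}$ and $e^p(\mathcal{K}) = e^{\dim B/2}$ the relevant power of the Euler class of $Y\to B$.
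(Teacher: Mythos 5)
Your plan follows the paper's proof almost step for step: the leading-order term from \fullref{thm.leading.term}, the algebraic independence of $e(\mathcal{K})$ and the $e(L_i)$ to isolate the term $e^p(\mathcal{K})\cup a_p$, the restriction to a $2p$--cycle $g\co C\to B$ via \fullref{proposition.pullback}, the strict separation of $t$--orders (since the correction classes have $\omega(A)>0$), and the sandwich $H_{\max}\le\nu(\Psi(C,f\circ g))\le\min\max L^+\le L^+(f\circ g)=H_{\max}$ using \fullref{lower bound}. That is exactly how the paper argues, including the observation that essentiality in bordism follows from $\Psi\ne 0$ via \fullref{independence}.

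There is one genuine omission in the first assertion. Your nonvanishing argument lives entirely in $H^*(B\times F_{\max})$, but the coefficient $b_{\sigma_{\max}}$ of the characteristic class is the image of $\PD_{B\times F_{\max}}e(E)$ in $H_*(B\times M)$ under the inclusion $F_{\max}\hookrightarrow M$; a nonzero class on $B\times F_{\max}$ could a priori die under this pushforward. Your remark that ``the $M$--class of the leading term is $[F_{\max}]$'' only covers the case where the $F_{\max}$--component of $a_p$ is a multiple of the fundamental class; for a general nonzero term, $a_p$ restricted to $\{\pt\}\times F_{\max}$ is some product of the $e(L_i)$, and one must know that its Poincar\'e dual survives in $H_*(M)$. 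The paper supplies this by noting that the generating Hamiltonian $H$ is a perfect Morse--Bott function, so that $H_*(F_{\max})\to H_*(M)$ is injective. With that one sentence added, your argument closes; without it, the first claim is not established for general $p$ and $a_p$. (A smaller imprecision: the degree count should place $a_p$'s $F_{\max}$--component in degree $I(\gamma)-2p$, not $0$, when you pair against $a\otimes e_i^*$ in \eqref{eq.functional}, but your parenthetical shows you intend this correctly.)
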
 
\begin{proof} If $e$ is the Euler class of $q\co  Y \to B$, then since $
\mathcal {K}$ is isomorphic to $\pr_1 ^{*} (Y_\gamma \times _{S^1} \mathbb{C})$  it
follows  that the Poincare dual of 
$ e ^{p} ( \mathcal {K}) \cup a _{p}$ is of the form $$\PD _{B}(e) \otimes
\PD _{F_ {\max}}(a_p| _{ [\pt] \times F_{\max}}) \in H_* (B \times M),$$ where
$\PD(e) \in H_* (B)$ and $\smash{(a_p| _{ [\pt] \times
F_{\max}}) ^{*}}$ is thought of as a class in $H_* (M)$ via inclusion of
$F_{\max}$ 
into $M$. Since the generating function $H$ of $\gamma$ is necessarily a perfect
Morse--Bott function (see McDuff and Salamon~\cite{MS2}) the inclusion of $F _{\max}$ into $M$ can
be shown to be injective on homology. The first part of the theorem is then
immediate from our assumption,
the definition of the characteristic classes $ c ^{q}_k (P_f)$, and \fullref{thm.leading.term}.  
We prove the second statement.  For some
$a \in H _{2p}  (B)$ we have that  $0 \neq
c ^{q}_{2p} (P _{f}) (a)$. By \fullref{proposition.pullback} $$c
^{q}_{2p} (P _{f}) (a) = \Psi (f \circ g, C),$$ where $g\co  C \to B$ is a smooth
map representing the rational homology class of $a$. Thus, the cycle $f \circ
g\co  C \to Q$ is essential in the bordism group by \fullref{independence}. Let
us see that it minimizes the max-length measure. By \fullref{thm.leading.term},  
$\nu ( \Psi (f
\circ g, C))= H _{\max}$. On the other $L^+(f \circ g)=H
_{\max}$, since all the loops in the image $ \text {Im} (f) \subset Q$ have
positive Hofer length $H _{\max}$. By \fullref{lower bound}
$f \circ g$ minimizes the measure $L^+(f)$ in its bordism class.  
\end{proof} 
\subsection[Proof of \ref{main.lie.group}]{Proof of \fullref{main.lie.group}} \label 
{section.ProofofTheorem.main.lie.group} Since $ e ( \wwtilde{\mathcal {K}}) =
e \neq 0$, the $p= \frac{I _{\gamma}}{2}$ term in the expansion of $e (E)$  is
nonzero. By \fullref{thm.lie.group}, the cycle  $f\co  B \to Q$ is 
essential and minimizes the measure $L^+(f)$ in its bordism class.
\qed 

\subsection[Proof of \ref{4}]{Proof of \fullref{4}} \label {proof.ref4}
Consider the fibration $h\co  S ^{2k+1} \to
\mathbb{CP}^k$. Homotop $\hat{f}\co  S^ {2k+1} \to \Ham(M, \omega)$,
so that  it takes the set $h ^{-1}( D^{c})$ to id, where $D \subset
\mathbb{CP}^k$ is an open ball. The new map will still be denoted by $\smash{\hat {f}}$.
Let $q\co  \mathbb{CP} ^{k} \to S ^{2k}$ be the quotient
map, squashing $ \mathbb{CP} ^{k} - D$ to $s_0 \in S ^{2k}$. There is an
induced quotient
map \begin{gather*}q| _{B}\times \id\co  (h ^{-1} (\bar {D}) \simeq \bar {D} \times S^1) \to S
^{2k}\times S^1.
\\
\hat {f} \big((q \times
\id)^{-1} (s_0 \times S ^{1}) \big)= \hat{f} ( h ^{-1} (\partial
\bar{D}))=\id, 
\tag*{\hbox{Since}}
\end{gather*}there is then an induced map $$ \tilde{f}\co 
S ^{2k} \times S^1 \to \Ham(M, \omega)$$ and the associated map
\begin{equation*} f_2\co   S ^{2k} \to \freels.
\end{equation*}
We will show
now that $ c ^{q} (P _{f_h})= c ^{q} (P _{f_2})$. On the other hand, we show in
\fullref{lemma3} below that $f$ is homotopy equivalent to $f_2$.

The restriction of $P _{f_h}$ to $D$ is  the pullback
by $q$ of the fibration $P _{
{f_2}}$ over $S ^{2k}$. By \eqref{degree}, a
section class $A
\in H ^{S}_2 (X)$ contributes to $ c ^{q} (P _{f})$ only if $c
_{ \text {vert}}
(A) \leq 0$; moreover, if $c _{\vert} (A)=0$, the class $A$ only contributes to
the degree zero class $c ^{q} _{0} (A)$ and so is not relevant to us. When $c
_{\vert} (A) <0$, the monotonicity
of $M$ implies that $ - \mathcal {C} (A)>0$ in this case,
because $X \simeq M \times S^2$ (since $f$ and $f _{h}$ map into components of
$Q$ corresponding to contractible loops in $ \Ham(M, \omega)$ by
construction). Put an admissible family $ \{\Omega_b\}$ on $P _{f_2}$ as in
\fullref{section.families}, so that the area of the fiber $X _{b}$ over $b \in S ^{2k}$ is
\begin{equation*} L ^{+} ( f (b)) + 2 \epsilon
\end{equation*}
with $\epsilon < - \mathcal {C}(A)$. Let $ \{J_b\}$ be a
compatible regular  family. The proof of 
\fullref{lower bound} implies that the area of each fiber of $p\co  P _{
f_2} \to S ^{2k}$ is at least $ - \mathcal {C} (A)$ whenever
there is a $ \{J_b\}$--holomorphic $A$--curve in that fiber. Thus, no element of
the moduli space $\mathcal {M} ^{*}_0 (P _{ f_2}, A; \{J_b\})$ lies in the fiber
over $s_0$, since the area of $\Omega _{s_0}$ is $2\epsilon$.

Pullback by $q$ the families $ \{\Omega_b\}, \{J_b\}$ to $P_{f_h}$ over $
\bar{D}$. The restriction of $ \{ q ^{*} \Omega_b\}$ over $\partial \bar {D}$
is by construction the constant family restricting to a split symplectic form,
ie $\omega+ \pi ^{*} (\alpha)$, with area $2\epsilon$ on each
fiber, since $\Omega
_{s_0}$ has that property.

Since $ \hat{f}$ is the constant map to $\id$ on $D ^{c}$,
the family $ \{q ^{*}\Omega_b\}$ over $\bar{D}$ can be extended to a family $ \{
\wwtilde{\Omega}_b\}$ on $P_{f_h}$ such that the area of each fiber $X$ over
$D ^{c}$ is $2\epsilon$. To see this note that  the fibers of $P _{f_h}|
_{D_c}$ can be identified with the product $M \times S^2$, up to an action of $S
^{1}$ which rotates the base $S^2$ and fixes $M$. Since the constant
family $ \{ q ^{*} \Omega_b\}$ over $\partial \bar {D}$ restricts to a split
form on the fibers $X \simeq M \times S^2$, which is invariant under this $S^1$
action, there is an extension $\{
\wwtilde{\Omega}_b\}$ of $ \{q ^{*} \Omega_b\}$ to $D ^{c}$. Pick any
extension $ \{ \tilde{J}_b\}$ of $ \{ q ^{*} (J_b)\}$ which is compatible with $\{
\wwtilde{\Omega}_b\}$.
By the above discussion, there are no $ \{
\tilde{J}_b\}$--holomorphic $A$--curves over $D ^{c}$. Thus,
$\{\tilde{J}_b\}$ is regular, since it is regular for curves over $D$ as it
is a pullback of a regular family $ \{J_b\}$ there. Moreover, $q$
pushes forward the moduli space
$ \mathcal {M} ^{*}_{0,1} (P_{f_h}, A;  \{\tilde{J}_b\})$ to the moduli
space $\mathcal {M} ^{*}_{0,1} (P _{f_2}, A, \{J_b\})$ ie  the diagram
\begin{equation*} \xymatrix{ \mathcal {M} ^{*}_{0,1} (P _{f_h}, A; \{
\widetilde{J_b}\}   \ar [r]^-{u \mapsto \tilde{q} \circ u} \ar [d]^ {}
& \mathcal {M} ^{*}_{0,1} (P _{f_2}, A; \{J_b\}) \ar [d]^- {}\\ \mathbb{CP} ^{k} 
\ar [r]^- {} & S ^{2k}}
\end{equation*}
commutes, where $ \tilde{q}$ is a lift of $q$ which is defined on $P
_{f_h}| _{D}$.

By definition, 
\begin{align*} c ^{q} (P _{f_2})=\sum_ {{A}} b _{{A}} \otimes
e ^{A} \in QH_* ^{S ^{2k}} ( M),\\
 c ^{q} (P _{f_h})=\sum_ {{A}} b' _{{A}} \otimes e ^{A} \in
QH ^{ \mathbb{CP} ^{k}}_* (M),
\end{align*}
where $b_A$ is the transverse intersection of $$\ev\co  \mathcal M ^{*}_
{0,1}(P _{f_h}, A, \{ \tilde{J}_b\}) \to P _{f_h}$$ with $I _{0} (
\mathbb{CP} ^{k}\times M),$
and $b'_A$  is the transverse intersection of $$\ev\co \mathcal M ^{*}_ {0,1}(P
_{f_2}, {A}, \{
J_b\}) \to P _{f_2}$$ with $I _{0} (S ^{2k} \times M)$. Since the above moduli
spaces lie over contractible subsets of $ \mathbb{CP} ^{k}$ and $ S ^{2k}$
\begin{align*}  
b _{A}= [\pt] \otimes b^{M}_A \in H_* (S ^{2k} \times M) \\ 
{b'}_A ^{M}= [\pt] \otimes {b'} _{A} ^{M} \in H_* ( \mathbb{CP} ^{k} \times M),
\end{align*}
for some $b_A, {b'}_A \in H_* (M)$. 
The above discussion implies that $b _{A} ^{M}={b'} ^{M}_{A}$.
Thus the two total classes are the same. To conclude that the only
nonvanishing classes of the two fibrations are in the top dimension note that
$ [\pt] \otimes b_A ^{M}$ has a nontrivial intersection pairing with $c \otimes
b \in H_* (S ^{2k} \times M)$ only if $c = [S ^{2k}] \in H _{*} (S ^{2k})$ and
use definition of the classes.
\qed
\begin{remark}  This proof makes  extensive use of monotonicity. It is
not
obvious to me if this theorem is true in a situation where one must use methods
of the virtual moduli cycle.  
\end{remark} 
\begin{lemma} \label {lemma3} The maps $f$ and $f_2$ above are freely homotopy
equivalent.
\end{lemma}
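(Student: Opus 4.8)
The plan is to pass to adjoints. Since $\freels=\operatorname{Map}(S^1,\Ham(M,\omega))$, free homotopy classes of maps $S^{2k}\to\freels$ are in bijection with free homotopy classes of maps $S^{2k}\times S^1\to\Ham(M,\omega)$, so it suffices to exhibit a free homotopy between the adjoints $\widehat f$ and $\widehat{f_2}$. Unwinding the construction of \fullref{section.homotopy.groups} for the trivial bundle $S^{2k}\times S^1\to S^{2k}$ gives $\widehat f=\hat f\circ t$, where $t\colon S^{2k}\times S^1\to S^{2k+1}$ is the chosen degree one map; and unwinding the definition of $f_2$ in the proof of \fullref{4} identifies $\widehat{f_2}$ with the map $\tilde f\colon S^{2k}\times S^1\to\Ham(M,\omega)$ produced there by descending $\hat f|_{h^{-1}(\bar D)}$ along $q|_{\bar D}\times\id$ under the trivialization $h^{-1}(\bar D)\cong\bar D\times S^1$.

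By Hopf's classification theorem, $[S^{2k}\times S^1,S^{2k+1}]\cong H^{2k+1}(S^{2k}\times S^1;\mathbb Z)\cong\mathbb Z$, detected by degree; hence any two degree one maps $S^{2k}\times S^1\to S^{2k+1}$ are homotopic, and we may take $t$ to be the canonical collapse $\mu\colon S^{2k}\times S^1\to(S^{2k}\times S^1)/(S^{2k}\vee S^1)=S^{2k+1}$. It therefore remains only to show $\tilde f\simeq\hat f\circ\mu$. Here I would first observe that in the construction of $f_2$ we are free to homotope $\hat f$ slightly more: since $h^{-1}(D^c)$ deformation retracts onto $h^{-1}(\mathbb{CP}^{k-1})\cong S^{2k-1}$ while the attaching sphere $\partial\bar D\times\{\theta_0\}$ represents a generator of $\pi_{2k-1}(h^{-1}(D^c))$, the union $h^{-1}(D^c)\cup\phi(\bar D\times\{\theta_0\})$ is contractible, so we may arrange $\hat f$ to be the identity also on the $2k$--disk $\phi(\bar D\times\{\theta_0\})$, still trivial over $h^{-1}(D^c)$ and without changing $[\hat f]$.

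Then $\hat f\circ\phi|_{\bar D\times S^1}$ is constant on $\partial\bar D\times S^1\cup\bar D\times\{\theta_0\}$; collapsing this set identifies $\bar D\times S^1$ with $S^{2k+1}$ and produces $\hat g\colon S^{2k+1}\to\Ham(M,\omega)$. Because this collapse factors as $\bar D\times S^1\xrightarrow{q|_{\bar D}\times\id}S^{2k}\times S^1\xrightarrow{\mu}S^{2k+1}$, surjectivity of $q|_{\bar D}\times\id$ together with the defining relation $\tilde f\circ(q|_{\bar D}\times\id)=\hat f\circ\phi|_{\bar D\times S^1}$ gives $\tilde f=\hat g\circ\mu$. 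Finally $\hat g\simeq\hat f$: extend the map $h^{-1}(\bar D)\to S^{2k+1}$ given by $\phi^{-1}$ followed by the collapse to a self map $\nu\colon S^{2k+1}\to S^{2k+1}$ by sending $h^{-1}(D^c)$ to the base point; then $\hat f=\hat g\circ\nu$, and a local degree count at a regular value lying in the image of the interior of $h^{-1}(\bar D)$ shows $\deg\nu=\pm1$, so $\nu\simeq\id$ once orientations are fixed compatibly. Hence $\widehat f=\hat f\circ t\simeq\hat f\circ\mu\simeq\hat g\circ\mu=\tilde f=\widehat{f_2}$, proving the lemma; in particular $[f_2]$ depends only on the free homotopy class of $\hat f$, as it must.

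The main obstacle is the bookkeeping in the last two paragraphs: pinning down exactly which subsets are collapsed in the two presentations of $f_2$, checking that $\tilde f$ genuinely factors through $\mu$, and handling the orientation conventions in the degree computation for $\nu$. Everything else is routine, and the one genuinely homotopy theoretic input (uniqueness of degree one maps out of $S^{2k}\times S^1$) is classical.
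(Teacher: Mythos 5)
Your overall route is essentially the paper's: both arguments come down to replacing $\hat{f}$ by a representative that is constant on a contractible subset of $S^{2k+1}$ strictly larger than $h^{-1}(D^c)$ (the paper uses the complement $O^c$ of a small ball $O\subset h^{-1}(D)$, you use $h^{-1}(D^c)\cup\phi(\bar{D}\times\{\theta_0\})$, which serves the same purpose), after which the two induced maps on $S^{2k}\times S^1$ are compared by a degree computation. Your third paragraph is a correct and more explicit version of the paper's one--line ``$i$ is homotopic to $t$ through maps of pairs; use degree,'' and the reduction via adjoints in your first paragraph is fine.

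The gap is in the second paragraph, at ``we may arrange $\hat{f}$ to be the identity also on the $2k$--disk $\phi(\bar{D}\times\{\theta_0\})$ \dots without changing $[\hat{f}]$.'' Preserving $[\hat{f}]$ is not what is needed: you must preserve the free homotopy class of $\tilde{f}=\widehat{f_2}$, and a free homotopy of $\hat{f}$ does not induce one of $\tilde{f}$ unless every intermediate map is constant on each sphere $\phi(\partial\bar{D}\times\{\theta\})$, since otherwise the intermediate maps do not descend along $q|_{\bar{D}}\times\id$. A homotopy with that property, restricted to $\phi(\bar{D}\times\{\theta_0\})$, would exhibit a null homotopy of the class $\alpha:=[\hat{f}\circ\phi|_{\bar{D}\times\{\theta_0\}}]\in\pi_{2k}(\Ham(M,\omega))$ (defined rel boundary, the boundary going to $\id$), so it exists only when $\alpha=0$; contractibility of the union by itself only produces an unconstrained free homotopy, under which $[\tilde{f}]$ can genuinely change. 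In particular your closing assertion that ``$[f_2]$ depends only on the free homotopy class of $\hat{f}$'' is circular (it is a consequence of the lemma, not an input) and is false as a general topological statement: with $\Ham(M,\omega)$ replaced by $K(\mathbb{Z},2k)$, the map that is constant on $h^{-1}(D^c)$ and equals $u\circ\mathrm{pr}$ on $h^{-1}(\bar{D})\cong\bar{D}\times S^1$, where $\mathrm{pr}$ collapses to $\bar{D}/\partial\bar{D}=S^{2k}$ and $u$ generates $\pi_{2k}$, is null homotopic, yet its $\tilde{f}$ pulls the fundamental class back to a generator of $H^{2k}(S^{2k}\times S^1)$ and so is essential. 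The repair is to normalize the representative at the outset: in the construction of $f_2$, choose the homotopy making $\hat{f}$ trivial on $h^{-1}(D^c)$ so that it is in fact trivial on the complement of a small ball contained in $\phi(D\times(S^1\setminus\{\theta_0\}))$ --- this is exactly the paper's $\hat{f}'$ and forces $\alpha=0$ --- and only then run your first and third paragraphs. (The paper's own ``clearly, we can homotop $\hat{f}$ through maps of pairs'' is terse at precisely the same point, so the issue is worth spelling out rather than inheriting.)
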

\begin{proof} 
 The map $f$ is induced from a composition of maps of 
 pairs
\begin{equation*}  (D ^{2k} \times S ^{1}, \partial D ^{2k} \times S ^{1}) \xrightarrow{t}
(S ^{2k+1}, \pt) \xrightarrow{ \hat{f}} (\Ham(M, \omega), \id).
\end{equation*}
On the other hand, ${f_2}$ is the induced map from the composition of
maps of pairs 
\begin{equation*}   (D ^{2k} \times S ^{1}, \partial D ^{2k} \times S ^{1})
\xrightarrow{i} (S ^{2k+1}, h ^{-1} ({D ^{c}}))
\xrightarrow{ \hat{f}} ( \Ham(M, \omega), \id).
\end{equation*}
Clearly, we can homotop $ \hat{f}$ through
maps of pairs to  a map $ \hat{f}'\co  (S ^{2k+1}, O ^{c}) \to (\Ham(M, \omega), \id)$, where $O \subset  h ^{-1} ({D})$ is an open ball
which does not contain $[\pt]$. Then $f$ is homotopic to a map induced from the
composition \begin{align*}  (D ^{2k} \times S ^{1}, \partial D ^{2k} \times S ^{1})
\xrightarrow{i} (S ^{2k+1}, O ^{c}) \xrightarrow{ \hat{f}'} ( \Ham(M, \omega), \id),
\end{align*}
and ${f_2}$ is induced from
\begin{align*}  (D ^{2k} \times S ^{1}, \partial D ^{2k} \times S ^{1})
\xrightarrow{t} (S ^{2k+1}, O ^{c}) \xrightarrow{ \hat{f}'} ( \Ham(M, \omega), \id).
\end{align*}
Thus, we just need to show that $i$ is homotopic via maps of pairs to $t$. To see
this one can use degree. 
\end{proof}
\section{The Hopf algebra structure of \texorpdfstring{$H_* ( \freels , \mathbb{Q})$}{H\137 *(LHam,Q)}} \label
{section.Hopf} This section is mostly an excursion, which studies the
relationship of the homomorphism $\Psi$ with the Pontryagin ring structure of $H_* ( \freels , \mathbb{Q})$. It
may be interesting to the reader in order to get an idea of how the use of
$S^1$--symmetry in the previous section relates to the bigger picture of QC
classes.

 The Milnor--Moore theorem states that a connected co-commutative
Hopf algebra $A$ over a  field of characteristic zero is generated by its
primitive elements. A  \emph{primitive} element is an element $a \in A$ such
that its coproduct is  $1 \otimes a + a\otimes 1$. More precisely it says that
$A$ is isomorphic as a  Hopf algebra to the universal enveloping algebra $
\mathcal {U} (P (A))$,  where $P (A)$ denotes the associated Lie algebra of its
primitive elements. In  other words the only relations in  $ \mathcal {U} (P
(A))$ are the ones of  the form $$a \otimes b - (-1) ^{pq} b \otimes a = ab -
(-1) ^{pq} ba,$$ where  the product on the right is the product in the Hopf
algebra. When $A$ is the  rational Hopf algebra of an $H$--space, Cartan--Serre
theorem states that the  Lie algebra of primitive elements consists of
spherical classes. In fact,  we have the following. \begin{theorem}[Milnor--Moore \cite{MM}, Cartan--Serre \cite{CS}] \label
{thm.milnor.moore} \label{Milnor-Moore}  Let $X$ be a connected 
 $H$--space. 
Denote by $ \pi_* (X, \mathbb{Q}) \subset H_* (X, \mathbb{Q})$ the  Lie
subalgebra  of the associated algebra of the ring, generated by the image of
the Hurewitz map $h\co  \pi_* (X) \to H_* (X, \mathbb{Q})$ and denote by
 $ \mathcal {U} (\pi_* (X, \mathbb{Q}))$ the universal enveloping algebra of  
 $\pi_* (X, \mathbb{Q})$. Then
\begin{equation*} H_* (X, \mathbb{Q}) \simeq \mathcal {U} (\pi_* (X, \mathbb{Q})),
\end{equation*}
as rings (in fact as Hopf algebras).
\end{theorem}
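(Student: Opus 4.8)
The plan is to deduce this statement from the purely algebraic structure theorem for connected graded Hopf algebras over a field of characteristic zero, combined with the identification of the Lie algebra of primitives with the rational homotopy. First I would record the Hopf algebra structure carried by $A := H_*(X,\mathbb{Q})$: the $H$--space multiplication $\mu\colon X\times X\to X$ induces the Pontryagin product (the algebra structure), the diagonal $\Delta\colon X\to X\times X$ together with the Kunneth isomorphism induces the coproduct, and since $\Delta$ is cocommutative up to homotopy the resulting Hopf algebra is graded--cocommutative; connectedness of $X$ gives $A_0 = \mathbb{Q}$, so $A$ is connected.

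Next comes the algebraic input (the Milnor--Moore structure theorem): for a connected graded--cocommutative Hopf algebra $A$ over $\mathbb{Q}$, the canonical Hopf algebra map $\mathcal{U}(P(A))\to A$ induced by the inclusion $P(A)\hookrightarrow A$ of the primitives is an isomorphism. I would establish this in two halves. Injectivity follows from the Poincar\'e--Birkhoff--Witt theorem: $\mathcal{U}(P(A))$ admits a PBW basis, and a filtration/degree bookkeeping shows no nonzero combination can map to zero. Surjectivity -- i.e.\ that $A$ is generated by its primitives -- is proved by induction on degree using the filtration by powers of the augmentation ideal: passing to the associated graded, cocommutativity over $\mathbb{Q}$ forces it to be primitively generated, and one lifts generators back to $A$. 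An equivalent and arguably cleaner route, valid when $X$ has finite type, is to dualize: the graded dual $A^*$ is a connected graded--\emph{commutative} Hopf algebra, hence by the Hopf--Leray--Borel structure theorem a free graded--commutative algebra (a tensor product of polynomial algebras on even generators and exterior algebras on odd generators); dualizing this free presentation reproduces the $\mathcal{U}(P(A))$ description.

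Finally I would identify $P(A)$ with the sub--Lie--algebra $\pi_*(X,\mathbb{Q})\subset A$ generated by the Hurewicz image. One inclusion is formal: the Hurewicz homomorphism intertwines coproducts and the generator of $H_*(S^n;\mathbb{Q})$ in degree $n$ is primitive, so $h(\pi_*(X))$ and hence the Lie subalgebra it generates lie in $P(A)$. The reverse inclusion is the Cartan--Serre theorem: rationally a connected $H$--space splits as a weak product of Eilenberg--Mac Lane spaces $K(\mathbb{Q},n)$, a direct computation of $H_*(K(\mathbb{Q},n);\mathbb{Q})$ shows its primitives are exactly the spherical classes, and a Serre spectral sequence / Postnikov tower induction propagates this to $X$. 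Combining the three steps yields $H_*(X,\mathbb{Q})\cong \mathcal{U}(P(A)) = \mathcal{U}(\pi_*(X,\mathbb{Q}))$ as Hopf algebras.

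The main obstacle is the surjectivity clause of the algebraic structure theorem -- equivalently, by duality, the Borel-type freeness of the graded--commutative dual. This is the step that genuinely uses characteristic zero (in characteristic $p$ divided-power Hopf algebras are counterexamples) and is the technical heart of the argument; the remaining steps are either formal diagram-chasing or appeals to the cited classical results of Milnor--Moore \cite{MM} and Cartan--Serre \cite{CS}.
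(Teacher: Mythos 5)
The paper does not prove this theorem: it is quoted as a classical result and attributed directly to Milnor--Moore \cite{MM} and Cartan--Serre \cite{CS}, so there is no internal proof to compare yours against. That said, your outline is a faithful reconstruction of the standard argument: (i) the Pontryagin product and the diagonal make $A=H_*(X,\mathbb{Q})$ a connected, graded-cocommutative Hopf algebra (note the coproduct exists with no finite-type hypothesis, since the K\"unneth map is an isomorphism over a field); (ii) the algebraic Milnor--Moore structure theorem gives $A\simeq\mathcal{U}(P(A))$; (iii) Cartan--Serre identifies $P(A)$ with the rational Hurewicz image. Two caveats worth making explicit. First, your ``cleaner'' dualization route to step (ii) via the Hopf--Leray--Borel theorem genuinely requires $X$ to be of finite type, whereas the filtration/PBW argument does not; since the paper applies the theorem to components of $L\mathrm{Ham}(M,\omega)$, whose finite-type status is not obvious, the filtration argument is the one you should lean on. Second, the rational splitting of $X$ into Eilenberg--Mac Lane spaces in step (iii) uses that an $H$--space is simple (abelian $\pi_1$ acting trivially on homotopy and homology); that hypothesis should be invoked explicitly. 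Finally, observe that the statement only asks for the Lie subalgebra \emph{generated by} the Hurewicz image, so for the inclusion $\pi_*(X,\mathbb{Q})\subset P(A)$ you need only that spherical classes are primitive and that $P(A)$ is closed under the graded commutator --- both of which you supply. With those small points tightened, the proposal is a correct proof sketch of the cited theorem.
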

 For $ [\gamma] \in \pi_1 ( \Ham(M, \omega), \id) $, 
let $L^{[\gamma]} \subset \freels$ denote the component containing the loop 
$\gamma$. As a space $$L^{[\gamma]}=\Omega ^{[\gamma]} \Ham(M, \omega) \times \Ham(M, \omega),$$ where $\Omega ^{[\gamma]}
\Ham(M, \omega)$ denotes the $\gamma$--component of the based loop space
at $\id$. Hence, $$ \pi_* ( X ^{ [\gamma]}) \simeq \pi_* ( \Ham(M, \omega))
\oplus \pi_* (\Omega ^{ [\gamma]}  \Ham(M, \omega)).$$  Combining this
with \fullref{Milnor-Moore} ($L^{[\gamma]}$ is not a  connected $H$--space
naturally but is homeomorphic to one), we get  $$ H_* ( L^{[\gamma]},
\mathbb{Q}) \simeq \mathcal {U}  (\pi_* ( \Ham(M, \omega), \mathbb{Q}))
\otimes \mathcal {U}( \pi_* (\Omega ^{ [\gamma]}  \Ham(M, \omega)), \mathbb{Q}))
$$
as rings. By \fullref{vanishing} below, $ \Psi$ 
vanishes on $$H_* ( \Ham(M, \omega),
\mathbb{Q}) \simeq \mathcal {U} \left(\pi_*
( \Ham(M, \omega), \mathbb{Q}) \right)$$ for $*>0$. On the other hand
$\Psi (H_0 ( \Ham(M, \omega), \mathbb{Q})$ is generated over $ \mathbb{Q}$ by $[M]$, the multiplicative 
identity element; see \fullref{remark.seidelrep}.

 If one is to extend
$\Psi$ to a map $$\Psi\co  H_* (\freels, \mathbb{Q}) \to QH _{*+2n} (M),$$ which can
likely be done using pseudocycles, the above discussion together with \fullref{main
theorem} shows that
$\Psi$ would only be interesting on $$H_* (\ls, \mathbb{Q}) = \bigoplus
_{\gamma}\mathcal {U}( \pi_* (\Omega ^{[\gamma]} \Ham(M, \omega)) \subset
H_* (\freels, \mathbb{Q}),$$ a direct sum over $[\gamma]$ of free
graded commutative algebras on $ \pi_* (\ls)$. 
At the same time,
working on the free loop space allows us to pass to the $S^1$ equivariant
setting, using which we were able to do computations in \fullref{QC.and.Hofer}.

Define 
$i ^{ [\gamma]}\co  \Ham(M, \omega) \to L^{\gamma}$ to be 
the inclusion which
takes an element $\phi \in \Ham(M, \omega)$ to the loop $\phi \circ
\gamma$.
 \begin{lemma} \label{vanishing} If $k>0$, $\Psi (f)=0$ for $f\co 
B^{k} \to i ^{[\gamma]} ( \Ham(M,
\omega))$, where .
\end{lemma}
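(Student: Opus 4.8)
The plan is to recognize $P_f$ as the \emph{trivial} fibration $B\times X_\gamma$ (as a Hamiltonian fibration over $B\times S^2$), to compute its total quantum class with a product almost complex structure, and then to observe that the resulting element of $QH^B_*(M)$ is concentrated in the Künneth slot $H_0(B)\otimes H_*(M)$, so that all positive-degree components vanish.

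First I would write $f=i^{[\gamma]}\circ g$ for a smooth map $g\co B^k\to\Ham(M,\omega)$, so that the fiber of $p\co P_f\to B$ over $b$ is the Hamiltonian fibration $X_{g(b)\circ\gamma}$ and
\[
P_f=(B\times M\times D^2_0)\cup(B\times M\times D^2_\infty)/\!\sim,\qquad (b,x,1,\theta)_0\sim(b,g(b)\circ\gamma_\theta(x),1,\theta)_\infty .
\]
I would then define $\Phi$ to be the identity on $B\times M\times D^2_0$ and $(b,y,r,\theta)_\infty\mapsto(b,g(b)^{-1}(y),r,\theta)_\infty$ on $B\times M\times D^2_\infty$. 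A direct check that the gluing relations are intertwined shows $\Phi$ descends to a diffeomorphism $P_f\cong B\times X_\gamma$; since $g(b)^{-1}\in\Ham(M,\omega)$, the map $\Phi$ acts fiberwise by Hamiltonian symplectomorphisms and commutes with the projection to $B\times S^2$, so it is an isomorphism of Hamiltonian fibrations over $B\times S^2$ preserving the fiber orientation $\sigma$. Hence it identifies the vertical Chern class, the coupling class $\mathcal C$, and the section classes of $P_f$ with those of $B\times X_\gamma$. (Equivalently, the $\Ham(M,\omega)$--action $\psi\cdot\delta=\psi\circ\delta$ on $\freels$ lifts to $U\to\freels$ and canonically trivializes the pullback of $U$ along $i^{[\gamma]}$.)

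Next I would compute $c^q(P_f)$ on $B\times X_\gamma$ using a constant admissible family $\{\Omega_b\equiv\Omega_0\}$ and a constant $\pi$--compatible family $\{J_b\equiv J_0\}$, with $J_0$ chosen regular for the section classes of $X_\gamma$ as in Seidel's setting; this is legitimate because by \fullref{structures} and \fullref{independence} the classes are independent of the chosen admissible family. The constant family is parametrically regular: for $(u,b)$ in the moduli space, $u$ is a $J_0$--holomorphic curve in $X_\gamma$, and restricting $D_{u,b}$ to sections $\xi$ with $dp(\xi)$ constant splits off the constant $T_bB$--summand (on which $D_{u,b}$ vanishes) and reduces to the ordinary linearized operator on $u^*TX_\gamma$, which is onto. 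Thus $\mathcal M^*_{0,1}(P_f,A;\{J_b\})\cong B\times\mathcal M^*_{0,1}(X_\gamma,\bar A;J_0)$ with $\ev=\id_B\times\ev_{X_\gamma}$, and $I_0(B\times M)$ corresponds under $\Phi$ to $B\times M_0$ with $M_0\subset X_\gamma$ the fiber over $0\in D^2_0$. Consequently the coefficient $b_A$ of $c^q(P_f)$ equals $[B]\otimes b_{\bar A}$, where $b_{\bar A}\in H_*(M)$ is the corresponding coefficient of the Seidel element $S([\gamma])$; in other words $c^q(P_f)=[B]\otimes S([\gamma])\in QH^B_*(M)$.

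Finally, evaluating the functional $c^q(P_f)$ on $a\in H_j(B)$ via \eqref{eq.functional} and the Künneth form of the intersection pairing on $B\times M$, each term is a multiple of $\langle[B],a\rangle_B$, which is nonzero only when $\deg a=0$. Hence $c^q_j(P_f)=0$ for all $j>0$; in particular, for $j=k>0$ and $a=[B]$ we get $\Psi(f)=c^q_k(P_f)([B])=0$. I expect the only real work to be in the second paragraph: verifying that $\Phi$ genuinely respects every piece of structure entering the definition of $c^q$ (the Hamiltonian fibration over $B\times S^2$, the orientation, hence $\mathcal C$ and the section classes) and that a product family may be used without destroying regularity. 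Once that bookkeeping is in place the vanishing is immediate.
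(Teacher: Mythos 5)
Your proposal is correct and takes essentially the same route as the paper: the paper trivializes $P_f$ by the identical fiberwise map $(b,x,z)_\infty\mapsto(b,\phi_b^{-1}(x),z)_\infty$ and then invokes the second statement of Axiom 1, whose verification in the paper is exactly your constant-family/K\"unneth computation showing that $c^q$ of a trivial $\mathcal{F}$--fibration is concentrated in $H_0(B)\otimes QH_*(M)$. You have merely inlined the details that the paper delegates to the verification of that axiom.
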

\begin{proof} This follows from the fact that for a map 
\begin{align*}  f\co  B ^{k} &\to
i_\gamma (\Ham(M, \omega))\subset L^{\gamma} \\
f (b)&= \phi_b \circ \gamma, \quad \text {where } \phi_b \in \Ham(M,
\omega),
\end{align*}
the fibration $P _{f}$ is
isomorphic to a  trivial $ \mathcal {F}$--fibration by an isomorphism which is a Hamiltonian
bundle map on each fiber,
and so the relevant invariants vanish by \mbox{\fullref{Axiom 1}}. Let 
$c _{ [\gamma]}$ be the constant map $f\co  B \to L ^{
[\gamma]}$ to the loop $ \gamma$. 
We trivialize $P _{f}$ as
follows: \[\displaylines{P _{f} = (B \times M \times D^2)_0 \cup _{f} (B
\times M \times D^2)_\infty\hfill
\cr
\hfill \xrightarrow{\tr} P _{c _{ [\gamma]}} = (B \times M \times D^2)'_0 \cup _{c
_{ [\gamma]}} (B \times M \times D^2)'_\infty,
\cr
\hfill\tr (b,x,z)_0 := (b, x,
z)'_0 \quad \text {and} \quad \tr (b, x, z)_\infty := (b, \phi_b ^{-1} (x),
z)'_\infty,\hfill
}\]
where $\phi_b \in \Ham(M, \omega)$ is as above. This map is
easily seen to be well defined. 
\end{proof}    
\section{Structure group of \texorpdfstring{$ \mathcal {F}$--}{F-}fibrations}  \label{structure group} This section is concerned with the
structure groups of the fibrations $ \tilde{p}\co  U\to \freels$ and $p\co U^{\smash{S^1}}  \to
Q$, which is indirectly used for the proof of \fullref{structures}. Another,
perhaps more pertinent goal here is to prove that $ \tilde{p}\co  U \to
\freels$ is universal for its structure group. 

 \begin{proposition} The
 structure group of ${p}\co  U\to \freels$ over the component of the loop
 $\gamma$ can be reduced to the group $ \mathcal {F} ^{\gamma}$ of Hamiltonian
 bundle maps of $X _{\gamma}$ which are identity over $D ^{2}_0$ and over a
 small neighborhood of $0 \in D ^{2}_\infty$ in coordinates of 
 \eqref{eq.X}.
\end{proposition}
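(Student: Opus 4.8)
The plan is to build an explicit good cover of the $\gamma$--component of $\freels$ over which $\tilde p\co U\to\freels$ trivializes with all transition functions manifestly of the required type. Everything reduces to the following point: for a loop $\gamma'$ lying in a contractible neighborhood of a reference loop $\gamma$, one can single out a \emph{distinguished} Hamiltonian bundle isomorphism $\Phi_{\gamma'}\co X_{\gamma'}\xrightarrow{\sim}X_{\gamma}$ which is the identity over $D^2_0$ and over a fixed small neighborhood of $0\in D^2_\infty$, and which depends continuously on $\gamma'$.

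To construct $\Phi_{\gamma'}$, observe first that the gluing relation of \eqref{eq.X} dictates its equatorial behavior: if $\Phi_{\gamma'}$ is to be the identity on $M\times D^2_0$, then on the boundary of $M\times D^2_\infty$ it must send $(y,1,\theta)_\infty$ to $(\gamma_\theta\circ(\gamma'_\theta)^{-1}(y),1,\theta)_\infty$. I would therefore fix a contraction $h\co V\times[0,1]\to\freels$ of a contractible neighborhood $V$ of $\gamma$, with $h(\gamma',0)=\gamma'$ and $h(\gamma',1)=\gamma$, fix a smooth cutoff $\rho\co[0,1]\to[0,1]$ equal to $0$ near $0$ and to $1$ near $1$, set $F(\gamma',r,\theta)=h(\gamma',\rho(r))_\theta\circ(\gamma'_\theta)^{-1}\in\Ham(M,\omega)$, and define $\Phi_{\gamma'}$ to be the identity on $M\times D^2_0$ and $(y,r,\theta)_\infty\mapsto(F(\gamma',r,\theta)(y),r,\theta)_\infty$ on $M\times D^2_\infty$. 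A direct check shows that this descends to a diffeomorphism $X_{\gamma'}\to X_{\gamma}$ (compatibility with $\sim$ is exactly the equatorial identity just noted, and $F=\mathrm{id}$ at $r=0$ makes the map well defined at the center), that it is fiberwise over $S^2$ the Hamiltonian symplectomorphism $F(\gamma',r,\theta)$ of $M$ and hence a Hamiltonian bundle map, that it equals the identity wherever $F=\mathrm{id}$, i.e. over $D^2_0$ and over $\{r<\epsilon\}\subset D^2_\infty$, and that it varies smoothly with $\gamma'$.

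Granting this, I would cover the component of $\gamma$ by contractible charts $V_i$ with reference loops $\gamma_i$ and contractions $h_i$ as above, obtaining trivializations $\tilde p^{-1}(V_i)\xrightarrow{\sim}V_i\times X_{\gamma_i}$, $u\mapsto(\gamma',\Phi^i_{\gamma'}(u))$ for $u\in X_{\gamma'}$. On $V_i\cap V_j$ the transition function $\gamma'\mapsto\Phi^j_{\gamma'}\circ(\Phi^i_{\gamma'})^{-1}\co X_{\gamma_i}\to X_{\gamma_j}$ is a composite of maps of the above type, hence again a Hamiltonian bundle map equal to the identity over $D^2_0$ and near $0\in D^2_\infty$; conjugating by once-and-for-all chosen identifications $X_{\gamma_i}\cong X_{\gamma}$ of the same type (obtained by running the same construction along a path in the component from $\gamma_i$ to $\gamma$) makes the transition cocycle take values in $\mathcal F^{\gamma}$. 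The main obstacle is not a computation but arranging the disk-filling $F$ so that it is simultaneously continuous in $\gamma'$ \emph{and} equal to the identity near the center of $D^2_\infty$; this is precisely what contractibility of $V$ buys, since it renders the family of loops $\theta\mapsto\gamma_\theta\circ(\gamma'_\theta)^{-1}$ coherently nullhomotopic. The remaining bookkeeping (checking the cocycle condition for the chosen identifications $X_{\gamma_i}\cong X_{\gamma}$) is the standard verification underlying any structure-group reduction, and reuses the same data needed below to realize $\freels$ as the classifying space of $\mathcal F$.
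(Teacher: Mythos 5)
Your proposal is correct and is essentially the paper's own argument: your map $\Phi_{\gamma'}$, given fiberwise by $F(\gamma',r,\theta)=h(\gamma',\rho(r))_\theta\circ(\gamma'_\theta)^{-1}$, is exactly the parallel transport $t_m$ of the connection $F_U$ the paper defines on $\tilde p\co U\to\freels$ (with $m(t)=h(\gamma',\rho(t))$ a path locally constant at its endpoints), and the paper likewise trivializes over contractible charts by transporting along contraction paths and identifies the transition cocycle with holonomies, which lie in $\mathcal F^{\gamma}$. The only cosmetic difference is that the paper contracts each chart directly to the basepoint $\gamma$ (so no auxiliary identifications $X_{\gamma_i}\cong X_\gamma$ are needed) and packages the construction in the language of an abstract connection, i.e.\ a functor on the path groupoid of $\freels$.
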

This proposition follows immediately from \fullref{lemma.str.group} proved
later in this section. 
For $\gamma: S^1
\to \Ham(M, \omega)$ let $ [\gamma]$ denote its equivalence class in $
\pi_1 ( \Ham(M, \omega), \id)$. 
\begin{proposition}  \label{lemma.structure.group} Let $Q ^{ [\gamma]}$ denote
a connected component of $Q$.  The structure group of $p\co  U^{ [\gamma]} \to Q
^{ [\gamma]}$ may be reduced to the group
 of Hamiltonian bundle
maps of $\pi _\gamma\co  X _{\gamma} \to S^2$,  which sit over rotations
the base $S^2$, with the axis of rotation corresponding to $0 \in D ^{2}_0,
0 \in D ^{2}_\infty$. Moreover, elements of this group act as $\id \times \mathrm{rot}$
on $M \times D^2_0 \subset X _{\gamma}$ and by identity on the fiber over 
$0 \in D^2 _{\infty}$.
\end{proposition}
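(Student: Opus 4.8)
The plan is to describe explicitly, for a loop $\gamma$, a reduction of the structure group of $p\co U^{[\gamma]} \to Q^{[\gamma]}$ to the asserted group $G^\gamma$ of Hamiltonian bundle maps of $\pi_\gamma\co X_\gamma \to S^2$ sitting over rotations of the base fixing the two poles $0\in D^2_0$ and $0\in D^2_\infty$, acting as $\id\times\mathrm{rot}$ on $M\times D^2_0$ and trivially on the fiber over $0\in D^2_\infty$. The starting point is the concrete model $U^{S^1}=(U\times S^\infty)/S^1$ with $U$ given by \eqref{eq8}, where $S^1$ acts on $U$ by $\tau\cdot(\gamma,x,r,\theta)_{0,\infty}=(\tau\cdot\gamma,x,r,\theta-\tau)_{0,\infty}$ and diagonally on $\freels\times S^\infty$. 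The key observation is that the $S^1$ action on the fiber $X_\gamma$ induced by this lift is \emph{precisely} rotation of the base $S^2$ about the poles, combined with the identity on $M\times D^2_0$ and a loop-dependent reparametrization on $M\times D^2_\infty$; this is exactly the content of the displayed action $\tilde\rho$ in \eqref{rho} and of the description of $\beta$ in \eqref{eq.action.beta} when one specializes to the symmetric situation.

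First I would recall from \fullref{remark.structure.group} and the (to-be-proved) \fullref{lemma.str.group} that the structure group of $\tilde p\co U\to\freels$ over $L^{[\gamma]}$ reduces to $\mathcal F^\gamma$, the group of Hamiltonian bundle maps of $X_\gamma$ which are the identity over $D^2_0$ and over a neighborhood of $0\in D^2_\infty$. Then, since $Q^{[\gamma]}=(L^{[\gamma]}\times S^\infty)/S^1$ and $U^{[\gamma]}$ is the quotient of the pulled-back bundle $\tilde p\times\id\co U\times S^\infty\to L^{[\gamma]}\times S^\infty$ by the lifted action $\tilde\rho$, the structure group of the quotient bundle is generated by $\mathcal F^\gamma$ together with the bundle automorphisms coming from the $S^1$ action itself: concretely, over an $S^1$-invariant trivializing neighborhood, the transition data of $U^{[\gamma]}$ is obtained from that of $U$ by allowing additional clutching by the maps $\tilde\rho(\tau)\co X_{(\gamma,s)}\to X_{(\tau\cdot\gamma,\tau\cdot s)}$. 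I would therefore compute $\tilde\rho(\tau)$ in the coordinates of \eqref{eq.X}: using $(x,\theta)_0\mapsto(\gamma_\theta(x),\theta)_\infty$ one checks that $\tilde\rho(\tau)$ acts by $\theta\mapsto\theta-\tau$ on both disks (a rotation of $S^2$ fixing the poles), by the identity on the $M$ factor over $D^2_0$, and by the fiberwise map $x\mapsto\gamma(\tau)^{-1}x$ over the pole $0\in D^2_\infty$ — which, after conjugating by the already-available elements of $\mathcal F^\gamma$ (or equivalently after passing to the model $Y\times_{S^1}X_\gamma$ of \fullref{lemma.isomorphism} and its action $\beta$ of \eqref{eq.action.beta}), can be arranged to be the identity on the fiber over $0\in D^2_\infty$ as well. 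Composing, the group generated by $\mathcal F^\gamma$ and these rotations is exactly $G^\gamma$: every element is a Hamiltonian bundle map lying over a rotation of the base about the poles, equal to $\id\times\mathrm{rot}$ on $M\times D^2_0$ and to $\id$ on the fiber over $0\in D^2_\infty$.

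To make the reduction precise I would build an atlas: cover $Q^{[\gamma]}$ by open sets of the form $q(V\times W)$ where $V\subset L^{[\gamma]}$, $W\subset S^\infty$ are small and $V\times W$ is a slice for the $S^1$ action, use the $\mathcal F^\gamma$-reduction of $\tilde p\co U\to\freels$ over $V$ to trivialize $U|_V$, and then check that the induced transition functions on $U^{[\gamma]}$ between overlapping slices differ by composition with an element of $G^\gamma$, using the explicit form of $\tilde\rho$ computed above and $S^1$-averaging (as in the construction of $\wwtilde\Omega^{S^1}$) to keep everything inside the Hamiltonian bundle category. The main obstacle I expect is bookkeeping rather than conceptual: one must verify carefully that the composite of an $\mathcal F^\gamma$-type transition with the rotation automorphism $\tilde\rho(\tau)$ still lands in the prescribed normal form — in particular that the ``identity over $D^2_0$'' condition is not destroyed by the rotation (it is not, since $\tilde\rho(\tau)$ acts as $\id\times\mathrm{rot}$ there) and that the residual fiberwise twist over $0\in D^2_\infty$ can be absorbed — and that the resulting cocycle is continuous in the $S^\infty$-direction, which is where the contractibility of $S^\infty$ and a standard partition-of-unity argument enter.
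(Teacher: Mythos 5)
Your overall strategy is essentially the paper's: present $U^{\smash{S^1}}$ as the quotient of $U\times S^\infty$ by the lifted action $\tilde{\rho}$, trivialize over slices of the principal $S^1$--bundle $\freels\times S^\infty\to Q$, and observe that the transition maps of the quotient are compositions of the rotations $\tilde{\rho}(\tau)$ with identifications of nearby fibers whose holonomy lies in $\mathcal{F}^\gamma$. But the step where you analyze $\tilde{\rho}(\tau)$ at the fiber over $0\in D^2_\infty$ is miscomputed, and the repair you propose for the (spurious) problem would not work.

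On the computation: in the coordinates defining $U$, the action is $\tau\cdot(\gamma,x,r,\theta)_{0,\infty}=(\tau\cdot\gamma,x,r,\theta-\tau)_{0,\infty}$ on \emph{both} charts, so $\tilde{\rho}(\tau)$ is already the identity on the $M$--factor over all of $D^2_\infty$, in particular on the fiber over $0\in D^2_\infty$; there is no residual twist $x\mapsto\gamma(\tau)^{-1}x$ to absorb. That twist appears only if you identify $X_{\tau\cdot\gamma}$ with $X_\gamma$ as in the $Y\times_{S^1}X_\gamma$ model, where the induced action on the $\infty$--chart is $(y,x,r,\theta)_\infty\mapsto(y\cdot\theta',\gamma^{-1}(\theta')x,r,\theta-\theta')_\infty$ --- and there it is genuinely nontrivial on the fiber over $0\in D^2_\infty$, not removable. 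On the repair: conjugating by elements of $\mathcal{F}^\gamma$ cannot change the behavior of a bundle map on the fiber over $0\in D^2_\infty$, since those elements are by definition the identity on a neighborhood of that fiber; so "absorbing" a twist there by such conjugation is impossible. What actually closes up the cocycle is the point your sketch leaves vague: the identification of a general fiber $X_{(\gamma',s')}$ with the model fiber must be chosen coherently, and the paper does this by parallel transport $t_i$ for the connection $F_U$ along a contraction of each slice. Since every such parallel transport map is the identity over all of $D^2_0$ and on the fiber over $0\in D^2_\infty$, the transition maps $t_j^{-1}\circ\tilde{\rho}(\theta_{ij})^{-1}\circ t_i$ are Hamiltonian bundle maps over the rotation by $\theta_{ij}$, equal to $\id\times\mathrm{rot}$ on $M\times D^2_0$ and to the identity on the fiber over $0\in D^2_\infty$, exactly as the proposition asserts. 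With that substitution your argument becomes the paper's proof; the $S^1$--averaging and partition-of-unity remarks at the end are not needed for this statement.
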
 
The proof will be given after some preliminaries. To make the discussion more
transparent we work
with connections, which to us will be just smooth or continuous functors. In
fact, there is a natural such connection on $\tilde{p}\co  U \to
\freels$. 
\subsection {The path groupoid} A
\emph{topological category}
is a small category in which the set of all objects and the set of all morphisms
are topologized, so that the source and target maps
and all structure maps are continuous. Let $p\co  P \to B$ be a
bundle with fiber $X$, where $B$ is a topological group. Let $ \mathcal {C} (B)$
be a topological groupoid whose
objects are the points of $B$. The morphisms from $a$ to $b$ are defined to be
$$ \mathcal {C} (a, b)= P (a, b),$$ the space of
continuous paths from $a$ to $b$, ie maps $m\co  [0,1] \to B$ s.t.\ $ m (0)=a$ and $m
(1)=b$. 
The composition
law $$ \mathcal {C} (a,b) \times \mathcal {C} (b,c) \to \mathcal {C} (a,c)$$ is
defined as follows.
 Let $m _{a,b}\co  [0,1] \to B$ be a path with endpoints $a,b$ and
$m _{b,c}\co  [0,1] \to B$ be a path with endpoints $b,c$. Then $m _{b, c}
\circ m _{a,b} \co  [0,1] \to B$ is defined by
$$ m _{b,c} \circ m _{a, b} (t)= m _{b,c} (t) \cdot (m _{b,c} (0)) ^{-1} \cdot m
_{a,b} (t).$$ This path clearly has endpoints $a,c$ and is continuous. This is essentially the
only
natural way to define a composition law for paths in a topological group.
 The topology on the set of morphisms ie the free path space
of $B$ is taken to be the compact open topology.
\subsection {The category  \texorpdfstring{$\mathcal {D} (P,B,p)$}{D(P,B,p)}}
We also define a topological category $ \mathcal {D} (P,B,p)$, whose space of
objects is homeomorphic to $B$ with elements: manifolds
$X_b=p ^{-1} (b)$ for $b \in B$. The space of morphisms from $X_a$ to $X_b$ is
defined to be 
\begin{equation*} \mathcal {D} (X_a, X_b)= \text {Homeo} (X_a, X_b),
\end{equation*} 
the space of homeomorphisms from $X_a$ to $X_b$. The composition law is just the
composition of homeomorphisms. 
\subsubsection {Topology on the space of morphisms of  $\mathcal {D} (P,B,p)$}
For each $b \in B$, let $U_b \subset B$ be an open set with
a trivialization $\phi _{b}\co  U _{b} \times X \to p ^{-1} (U_b)$. Let now
$a, b \in B$. Any morphism 
whose source is the fiber $X _{u_1}$ with $u_1 \in U_a$ and target $X _{u_2}$
with $u _{2} \in U _{b}$ can be identified via the trivializations $\phi_a,
\phi_b$ with
a homeomorphism from $X$ to $X$. Thus, the set of such morphisms is
identified with $U_a \times U _b \times \text {Homeo} (X, X)$, which we will
denote by $ \mathcal {D} (U_a, U_b)$. It has a natural topology, where
the topology on $\text {Homeo} (X,X)$ is the compact-open topology. The basis
for a topology on the set of all morphism then consists of open sets 
 in $ \mathcal {D} (U_a, U_b)$ for all $a,b \in B$. Clearly, a different choice
of trivializations gives rise to equivalent topologies. 
\subsection {Connections}
\begin{definition} Let $p\co  P \to B$ be as above. An
\emph{abstract connection} is defined to be
a continuous functor $F$ from the
category $ \mathcal {C} (B)$ to $ \mathcal {D} (P,B,p)$.
\end{definition}

The map $F
(m)\co  p ^{-1} (m_0) \to p ^{-1} (m_1)$ will be called
the \emph{parallel transport map}. The name of the connection is the name of the
corresponding functor (eg $F$). The word abstract in abstract connection will
often be dropped. We may define the holonomy group of an abstract connection exactly the same way
as for usual smooth connections on $G$--bundles, using the parallel transport
maps.
\begin{lemma}
 \label{lemma.str.group} The
 structure group of $ {p}\co  P \to B$ over a connected component can be reduced to
 the holonomy group $\Hol(F)$ of the connection ${F}$ on this component. 
\end{lemma}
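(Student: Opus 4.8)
The plan is to show that the holonomy group $\Hol(F)$, based at a point $b_0$ in a connected component $B_0$ of $B$, serves as a structure group for $p\co P|_{B_0} \to B_0$ by using the parallel transport maps of $F$ to manufacture local trivializations whose transition functions take values in $\Hol(F)$. First I would fix a connected component $B_0$ and a basepoint $b_0 \in B_0$, and for each $b \in B_0$ choose a path $\lambda_b\co [0,1] \to B_0$ from $b_0$ to $b$ (with $\lambda_{b_0}$ the constant path). The parallel transport $F(\lambda_b)\co p^{-1}(b_0) \to p^{-1}(b)$ is a homeomorphism of the fiber, so choosing once and for all an identification $p^{-1}(b_0) \cong X$ gives a set-theoretic identification $\psi_b\co X \to p^{-1}(b)$ for every $b$. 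To upgrade this to genuine \emph{local} trivializations, I would work on a contractible (or at least path-connected, simply connected) open neighborhood $U$ of a given point, and instead choose the paths to vary continuously in $b \in U$; continuity of the functor $F$ into $\mathcal{D}(P,B,p)$, together with the fact that the topology on the morphism space of $\mathcal{D}(P,B,p)$ is the compact--open topology via local trivializations, then guarantees that the resulting map $U \times X \to p^{-1}(U)$ is continuous, hence a trivialization.

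Next I would compute the transition functions. Given two such trivializations over $U$ and $V$ coming from continuously varying path families $\{\mu_b\}_{b \in U}$ and $\{\nu_b\}_{b \in V}$, the transition function at a point $b \in U \cap V$ is the homeomorphism of $X$ corresponding to $F(\mu_b)^{-1} \circ F(\nu_b)$, which by functoriality equals $F(\bar\mu_b \cdot \nu_b)$, parallel transport around a loop based at $b_0$ (after conjugating the loop by a fixed path $\lambda_{b}$ from $b_0$ to $b$, using that $F$ is a functor on the path \emph{groupoid} so that all basepoints give conjugate holonomy groups). Thus the transition function lies in $\Hol(F)$, as desired. It remains to check the cocycle condition, which is automatic from functoriality, and to verify that $\Hol(F)$, with the subspace topology inherited from $\mathrm{Homeo}(X,X)$, acts continuously — this follows because composition in $\mathcal{D}(P,B,p)$ is continuous.

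The main obstacle I expect is the continuity of the local trivializations: the parallel transport maps $F(m)$ are only asserted to depend continuously on the morphism $m$ in the topology of $\mathcal{C}(B)$ (the compact--open topology on the free path space), so I must check that one can genuinely choose the family $\{\mu_b\}_{b \in U}$ to depend continuously on $b$ — e.g.\ by taking $\mu_b(t) = \exp$-type interpolations, or more robustly by using local triviality of $p$ to straighten paths inside a single chart $U_{b_0}$, where paths can be chosen as straight-line segments in a local coordinate model of the topological group $B$. Once that continuity is in hand, everything else is formal bookkeeping with the groupoid structure. I would also remark, as the statement only claims a \emph{reduction} of the structure group, that I do not need $\Hol(F)$ to be a Lie group or even locally compact; it suffices that it is a topological group acting continuously and effectively on $X$, which is exactly what the compact--open topology on $\mathrm{Homeo}(X,X)$ provides.
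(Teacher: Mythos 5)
Your proposal is correct and follows essentially the same route as the paper: the paper covers the component by contractible open sets $U_i$, chooses a free homotopy $H_i$ contracting $U_i$ to the basepoint $b_0$ (which is exactly your continuously varying family of paths $\{\mu_b\}$), trivializes by parallel transport along these paths, and observes that the transition maps are holonomies of the loops $h_{j,x}^{-1}\circ h_{i,x}$ based at $b_0$. Your additional remarks on continuity and the cocycle condition are consistent with, and slightly more careful than, the paper's two-line argument.
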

\begin{proof} 
Let $
\{U_i\}$ be a cover of $B$ by contractible open sets and  $H_i \co  U_i
\times I \to B$ be a free homotopy, which at time 0 is the constant map to
$b_0$ and at time 1 is the inclusion map of $U_i$. Then parallel translating by $F$, along the paths of the homotopy $h _{i,x} (t)=H_i (x, t)$, gives a trivialization
$\tr_i\co
U_i \times X \to p ^{-1} (U_i)$. The transition map $\tr _{ij}\co  U_i
\bigcap U_j \times X \to U_i
\bigcap U_j \times X$ is by construction and functoriality of $F$ 
given by
parallel translation by $F$ along the loops $ h ^{-1} _{j,x} \circ h _{i, x}$.
Here $\circ$ is the multiplication in the groupoid $ \mathcal {C} (B)$. 
\end{proof}
\subsection {A connection \texorpdfstring{${F _{U}}$}{F\137 U} on \texorpdfstring{${p}\co  U \to \freels$}{p: U -> LHam}} The space
$\freels$ is a topological group and we may take the topological groupoid
 $ \mathcal {C} (\freels)$ defined as above, except  that we take the
 morphisms in the groupoid to be smooth in the sense below.
 \begin{definition} \label{def.smooth} We define a map $m\co  [0,1] \to
 \freels $ to be \emph{smooth} if it is locally constant at the
 endpoints and the associated map $ \tilde{m}\co  [0,1] \times S^1 \to \text
 {Ham}(M, \omega)$ is smooth.
\end{definition}
 The groupoid  $ \mathcal {C} (\freels)$ is topologized as a subspace of
 continuous maps with its compact open topology.
 \subsubsection*
 {The parallel
 transport map} Let $m\co  I \to \freels$ be a path. We define the map ${F _{U}}
 (m)= t_m$ from the fiber $X _{m_0}$
over $m (0)=m_0$, to the fiber $X _{m_1}$ over $m (1)=m_1$ as follows.  We have 
\begin{align*} X _{m_0}&= M \times D^2_0 \cup _{m_0}M  \times D^2_\infty, \\
X_{m_1}&= M \times D^2_0 \cup_{m_1} M \times D^2_\infty
\end{align*} If $r, \theta$ are polar coordinates on $D
^{2}$, then \begin{equation*} 
t_m (x, r, \theta)_0= (x, r,
\theta)_0 \text \quad { \text {and}} \quad
t_m (x,r, \theta)_\infty= (m _{r, \theta} \circ m ^{-1} _{0, \theta}(x), r, \theta),
\end{equation*}
where $m _{r, \theta}$ denotes the element of the loop $m _{r}=m (r)$ at time
$\theta$.

This is well defined under the gluing since the
diagram $$ \xymatrix{(x, 1, \theta)_0   \ar [r]^-{\hbox{\footnotesize$\sim$}} \ar [d]^
-{t_m} & (m _{0, \theta} (x), 1, \theta) _\infty \ar  [d]^-
{t_m} \\ (x, 1, \theta)_0  \ar [r]^-{\hbox{\footnotesize$\sim$}} & \bigl (m _{1, \theta} \circ m
^{-1} _{0, \theta
} \circ m _{0, \theta} (x)=m _{1, \theta} (x), 1, \theta \bigr)_\infty} $$
commutes. 
We leave it to the reader to verify that this gives 
a continuous functor ${F _{U}}\co  \mathcal {C} (\freels) \to
\mathcal {D} ({U}, \freels,{p})$, which assigns to $\gamma \in \freels$ the fiber
$X_\gamma$ and
to a morphism $m\co  I \to \freels$ from $\gamma_0$ to $\gamma_1$ the map $t_m\co X
_{\gamma_0} \to X _{\gamma_1}$. 
We denote by
 $\freels ^{\gamma}$ the component of the loop $\gamma$ in $\freels$.

\begin{lemma}  The group $\Hol (F _{U})$ is
isomorphic to the group $ \mathcal {C} (\gamma, \gamma)$ of automorphisms of the
object $\gamma$ in $ \mathcal {C} (\freels)$.
\end{lemma}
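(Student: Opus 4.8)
The plan is to show that the functor $F_U$ restricts, on the automorphisms of the object $\gamma$, to an injective group homomorphism whose image is by definition the holonomy group, so that the only real work is computing a kernel from the explicit formula for parallel transport.

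First I would record the formal setup. Since $F_U\co \mathcal{C}(\freels)\to\mathcal{D}(U,\freels,p)$ is a functor, its restriction to the vertex group $\mathcal{C}(\gamma,\gamma)$ is a homomorphism of groups into $\Aut(X_\gamma)$; here $\mathcal{C}(\gamma,\gamma)$ is a group because in a groupoid every morphism is invertible (one checks directly that, for loops at $\gamma$, the composition law reads $(n\circ m)(t)=n(t)\cdot\gamma^{-1}\cdot m(t)$, with unit the constant path $c_\gamma$ and inverse of $m$ the loop $t\mapsto \gamma\, m(t)^{-1}\gamma$). By the definition of the holonomy group, a loop based at $\gamma$ in $\freels^\gamma$ is exactly a morphism in $\mathcal{C}(\gamma,\gamma)$, and its parallel transport is $F_U(m)=t_m$; hence the image of $F_U|_{\mathcal{C}(\gamma,\gamma)}$ is precisely $\Hol(F_U)$. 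So it remains only to prove injectivity, i.e.\ triviality of the kernel.

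Next I would compute the kernel directly. Let $m\co I\to\freels^\gamma$ be a smooth loop based at $\gamma$, so $m_0=m_1=\gamma$ and in particular $m_{0,\theta}=\gamma_\theta$ for all $\theta$. By the formula for the parallel transport map, $t_m$ is the identity on $M\times D^2_0\subset X_\gamma$, while on $M\times D^2_\infty$ it is
\[
t_m(x,r,\theta)_\infty=(m_{r,\theta}\circ m_{0,\theta}^{-1}(x),\,r,\,\theta)_\infty .
\]
Thus $t_m=\id_{X_\gamma}$ if and only if $m_{r,\theta}\circ m_{0,\theta}^{-1}=\id_M$ for all $r\in I$ and all $\theta$, i.e.\ if and only if $m_{r,\theta}=m_{0,\theta}=\gamma_\theta$ for all $r,\theta$. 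This says exactly that $m_r=\gamma$ for every $r$, that is, $m=c_\gamma$, the identity morphism of the object $\gamma$ in $\mathcal{C}(\freels)$. Hence $F_U|_{\mathcal{C}(\gamma,\gamma)}$ has trivial kernel and therefore induces an isomorphism of groups $\mathcal{C}(\gamma,\gamma)\cong\Hol(F_U)$. (One can upgrade this to an isomorphism of topological groups: the map $m\mapsto t_m$ is continuous because $F_U$ is a continuous functor, and it has a continuous inverse since $m_{0,\theta}=\gamma_\theta$ is fixed, so $m_{r,\theta}$ is recovered continuously from $t_m$ by post‑composing the $(r,\theta)$‑slice of $t_m$ with $\gamma_\theta$.)

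The only genuine obstacle is bookkeeping with the twisted composition law of the path groupoid — in particular recognizing that the identity of $\mathcal{C}(\gamma,\gamma)$ is the \emph{constant} path at $\gamma$, not something more elaborate — after which the kernel computation from the explicit expression for $t_m$ is immediate. No new analysis is needed; everything reduces to the formulas already set up for $F_U$.
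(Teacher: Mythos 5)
Your proof is correct and follows essentially the same route as the paper: the paper's own argument is the one\--line observation that the natural surjective holonomy map $\hol\co \Aut_{\mathcal{C}}(\gamma)\to\Hol(F_U)$ has trivial kernel ``by construction,'' and you simply make that construction explicit by reading off from the formula for $t_m$ that $t_m=\id_{X_\gamma}$ forces $m_{r,\theta}=\gamma_\theta$ for all $r,\theta$, i.e.\ $m=c_\gamma$. Your verification that $c_\gamma$ is indeed the identity of the vertex group under the twisted composition law is a worthwhile detail the paper leaves implicit.
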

\begin{proof} By
construction of the connection
 $F _{U}$,
the natural surjective holonomy map $\hol\co   \text {Aut} _{
\mathcal {C}} (\gamma) \to \Hol (F)$ has no kernel.
\end{proof}
Let $E$ denote the space of all smooth paths in $\freels$ based at $\gamma$
(see \fullref{def.smooth}). This is a contractible space with a free 
continuous action of the group $\text {Aut} _{ \mathcal {C}} (\gamma)$ acting by left multiplication using the topological 
groupoid structure of $ \mathcal {C} (\freels)$. Moreover, this action
fixes the fibers of the
projection $k\co  E \to \freels$ given by evaluating at the endpoint and is
transitive on the fibers. It follows that $k\co  E \to \freels$ is the universal $\text
{Aut} _{ \mathcal {C}} (\gamma)$--bundle. In other words
\begin {equation*} B\text {Aut} _{
\mathcal {C}} (\gamma) = B \Hol (F _{U})= \freels ^{\gamma}.
\end {equation*}
In fact, we have the following: 
\begin{proposition} The bundle $ \tilde{p}\co  U \to \freels ^{\gamma}$ is the
associated bundle to the universal principal $\Aut _{ \mathcal {C}} (\gamma)$--bundle $k\co  E \to
\freels ^{\gamma}$.
\end{proposition}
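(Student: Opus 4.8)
The plan is to show that $\tilde p\co U\to \freels^{\gamma}$ is isomorphic, as a fiber bundle with fiber $X_\gamma$, to the associated bundle $E\times_{\Aut_{\mathcal C}(\gamma)} X_\gamma$, where the structure group $\Aut_{\mathcal C}(\gamma)=\Hol(F_U)$ acts on $X_\gamma$ via its identification with the holonomy group of $F_U$ (parallel transport around loops based at $\gamma$). Since the previous lemmas already identify $k\co E\to\freels^{\gamma}$ as the universal $\Aut_{\mathcal C}(\gamma)$--bundle, the only thing left is to produce this isomorphism of bundles over $\freels^{\gamma}$ and check it is well defined.

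First I would define the candidate map. A point of $E\times_{\Aut_{\mathcal C}(\gamma)} X_\gamma$ is an equivalence class $[m,\xi]$ with $m$ a smooth path in $\freels$ based at $\gamma$ and $\xi\in X_\gamma$, under $(m\cdot\phi,\xi)\sim(m,F_U(\phi)\xi)$ for $\phi\in\Aut_{\mathcal C}(\gamma)$. Define $\Phi([m,\xi])=F_U(m)(\xi)\in X_{m_1}\subset U$, the parallel transport of $\xi$ along $m$ to the endpoint $m_1=k([m,\xi])$. Functoriality of $F_U$ (the key property established when $F_U$ was constructed, via the commuting gluing diagram) gives $F_U(m\cdot\phi)=F_U(m)\circ F_U(\phi)$, so $\Phi$ is well defined on equivalence classes; it covers the identity on $\freels^{\gamma}$ by construction. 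Continuity of $\Phi$ follows from continuity of the functor $F_U$ into $\mathcal D(U,\freels,\tilde p)$ together with the description of the topology on morphisms in terms of local trivializations.

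Next I would check $\Phi$ is a bundle isomorphism. It is fiberwise bijective: over a loop $\delta\in\freels^{\gamma}$, fixing any smooth path $m$ from $\gamma$ to $\delta$, every element of the fiber $k^{-1}(\delta)/\!\sim$ has a unique representative of the form $[m,\xi]$, and $\xi\mapsto F_U(m)(\xi)$ is the parallel transport map $t_m\co X_\gamma\to X_\delta$, which is a homeomorphism with inverse $t_{m^{-1}}$. To see $\Phi$ is a local homeomorphism (hence a bundle isomorphism, since it is a continuous fiberwise bijection covering $\id$), I would use the explicit local trivializations $\tr_i\co U_i\times X_\gamma\to\tilde p^{-1}(U_i)$ built in the proof of \fullref{lemma.str.group} from a contracting homotopy $H_i$ of $U_i$: under these, $\Phi$ reads off as $(x,\xi)\mapsto(x,F_U(h_{i,x})(\xi))$, which is manifestly a homeomorphism onto its image, with the analogous local trivialization on the associated-bundle side.

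The main obstacle I expect is purely a matter of bookkeeping rather than depth: verifying that $\Phi$ is genuinely continuous (not just fiberwise continuous) and that it respects the structure group $\mathcal F\subset\Hol(F_U)$ in the refined sense of \fullref{lemma.structure.group}, i.e. that the reduction of the structure group of $\tilde p$ to $\Hol(F_U)$ is compatible with the further reduction to $\mathcal F^{\gamma}$. Concretely, one must confirm that the parallel-transport homeomorphisms $t_m$ and the transition maps $h^{-1}_{j,x}\circ h_{i,x}$ act on $X_\gamma$ by the special Hamiltonian bundle maps described in \fullref{remark.structure.group} — identity over $D^2_0$ and near $0\in D^2_\infty$ — which is immediate from the formula $t_m(x,r,\theta)_0=(x,r,\theta)_0$ and $t_m(x,r,\theta)_\infty=(m_{r,\theta}\circ m^{-1}_{0,\theta}(x),r,\theta)_\infty$ together with the fact that $m$ is a path of loops based at $\gamma$. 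Once these identifications are in place the proposition follows, since an associated bundle is determined by its structure group action and the universal bundle $E\to\freels^{\gamma}$.
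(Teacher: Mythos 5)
Your proposal is correct and follows exactly the route the paper intends: the paper's own proof simply says the statement ``follows from the proof of \fullref{lemma.str.group}'' and leaves the details to the reader, and your argument --- building the isomorphism $E\times_{\Aut_{\mathcal C}(\gamma)}X_\gamma\to U$ from the parallel transport of $F_U$ and checking it against the local trivializations $\tr_i$ constructed there --- is precisely the filling-in of those details. The functoriality check $F_U(m\cdot\phi)=F_U(m)\circ F_U(\phi)$ and the explicit formula for $t_m$ are the right points to verify, so nothing is missing.
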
 
\begin{proof} This follows from the proof of \fullref{lemma.str.group}. The
details are left to the reader.
\end{proof}

\subsection[Proof of \ref{lemma.structure.group}]{Proof of \fullref{lemma.structure.group}} Recall that the
bundle $p\co  U^{\smash{S^1}} \to Q$ is the quotient by the $S^1$ action $ \tilde{\rho}$ of
the bundle $p \times \id\co  U \times S ^{\infty} \to \freels \times S ^{\infty}$
(cf \fullref{setup}).
Let $V_i$ be a contractible open set in $Q ^{\gamma}$ and $$g_i: V_i \times S^1 
\to \freels \times S^\infty$$ a local trivialization of the principal $S^1$ bundle
$h\co  \freels \times S^\infty \to Q ^{\gamma}$. Let $H_i$ be a free homotopy
of the map $g_i\co 
{V_i \times 0} \to \freels \times S^\infty$ to the constant map to
$(\gamma_0,s_0 )$. As before, the connection $F$ then induces a map $$t_i\co  V_i
\times X _{\gamma_0} \to {p} ^{-1} ( g_i (U_i \times 0)),$$ by parallel
translating along the paths of the homotopy $H_i$. 
The transition maps $t _{ij}$ have the form:
\begin{equation*} t _{ij} (u, x) =  (u, t ^{-1}_j \circ \tilde{\rho}(\theta
_{ij}) ^{-1} \circ t_i (x))
\end{equation*}
where $\theta _{ij}$ comes from the transition maps $g _{ij}\co V _i
\cap V_j \times S^1 \to V _i \cap V_j
\times S^1$, $g _{ij} (u, \theta) = (u, \theta + \theta _{ij})$.  By
construction, this is a Hamiltonian bundle map of
$\pi _{\gamma}\co  X_\gamma \to S^2$ to itself which sits over the rotation by
$\theta _{ij}$
of the base and fixes the fibers over $0$ and infinity.
\qed   
\bibliographystyle{gtart}
\bibliography {link}  
\end{document}